\documentclass{amsart}

\usepackage[T1]{fontenc}
\usepackage[utf8]{inputenc}
\usepackage[english]{babel}
\usepackage{amsmath,amssymb,amstext}
\usepackage{amsthm,amsfonts}
\usepackage{enumitem}
\usepackage[normalem]{ulem}
\usepackage{mathtools}
\usepackage{xcolor}
\usepackage{hyperref}
\usepackage{tikz}
\usepackage{xurl}

\newcommand{\abs}[1]{\left| #1 \right|}

\newcommand{\NN}{\mathbb{N}}
\newcommand{\QQ}{\mathbb{Q}}
\newcommand{\RR}{\mathbb{R}}
\newcommand{\CC}{\mathbb{C}}
\newcommand{\ZZ}{\mathbb{Z}}

\DeclareMathOperator{\Gal}{Gal}

\theoremstyle{plain}
\newtheorem{theorem}{Theorem}[section]
\newtheorem{prop}[theorem]{Proposition}
\newtheorem{lemma}[theorem]{Lemma}
\newtheorem{cor}[theorem]{Corollary}

\theoremstyle{definition}

\theoremstyle{remark}

\title{Complete Resolution Of A Family Of Twisted Thue Equations}

\author[T. Hilgart]{Tobias Hilgart}
\address{T. Hilgart,
University of Salzburg,
Hellbrunnerstrasse 34/I,
A-5020 Salzburg, Austria}
\email{tobias.hilgart\char'100proton.me}

\author[C. Premstaller]{Carina Premstaller}
\address{C. Premstaller,
University of Salzburg,
Hellbrunnerstrasse 34/I,
A-5020 Salzburg, Austria}
\email{carina.premstaller\char'100plus.ac.at}

\author[V. Ziegler]{Volker Ziegler}
\address{V. Ziegler,
University of Salzburg,
Hellbrunnerstrasse 34/I,
A-5020 Salzburg, Austria}
\email{volker.ziegler\char'100plus.ac.at}

\begin{document}

\begin{abstract}
 One of the first infinite families of Thue equations,
 $$F_n(X)=X^3 - (n-1) X^2Y - (n+2)XY^2 - Y^3 = \pm 1$$ for $n\in \ZZ$,
 was solved by Thomas in 1990.
 This family is associated to the simplest cubic fields $\QQ(\lambda)$ of Shanks,
 where $\lambda$ is a root of $F_n(X,1)$.
 Levesque and Waldschmidt twisted the Thue equations by an exponent $t$ and 
 looked at the equation
 $$N_{\QQ(\lambda)/\QQ}(X-\lambda^t Y)=\pm 1,$$
 where $t\in \ZZ$ with $t\neq 0$. 
 In this paper, we find all solutions $(X,Y,n,t)\in \ZZ^4$ with 
 $n,t\in\ZZ$ and $t\neq 0$ to this family of twisted Thue equations, 
 thereby answering a question of Levesque and Waldschmidt.
\end{abstract}

\maketitle
\begin{center}
\textit{Dedicated to Michel Waldschmidt on the occasion of his 80th birthday.}
\end{center}
\section{Introduction}
	
	For a given Thue equation, all integer solutions can be explicitly determined. 
    In particular, Baker’s work \cite{baker0} provides explicit upper bounds
	for the size of solutions, reducing the problem to a finite computation. 
    Combined with reduction techniques and modern computational tools, 
    this yields a practical method for determining all integer solutions of a 
    given Thue equation (see e.g. \cite{Tzanakis:1989}).
	
	In 1990, Thomas \cite{thomas} considered the family of polynomials associated 
    to Shanks' simplest cubic fields,
	\begin{equation}
		\label{eq-shanks}
		F_n(X,Y) = X^3 - (n-1) X^2Y - (n+2)XY^2 - Y^3 = \pm 1. 
	\end{equation}
	He effectively proved that for $n > 1.3675 \cdot 10^7$, there are no 
	non-trivial solutions, i.e. no triple $(n,x,y)\in \ZZ$ such that
	$$n \geq 0, \quad \max\{|x|,|y|\} \geq 2 \quad \text{and}
		\quad F_n(x,y) = \pm 1 $$
    exists.        
	Mignotte \cite{mignotte} completed Thomas' work in 1993,
	by solving the problem for all $n$.

	A complete list of solutions is given by
    \begin{center}
        $\begin{array}{c|cccccc}
             n & & & (x,y) & & & \\
             \hline
             0 & (5,4) & (-4,9) & (-9,5) & (-2,1) & (1,1) & (-1,2) \\
             1 & (2,1) & (-3,2) & (1,3) \\
             3 & (7,2) & (-2,9) & (-9,7) & & &
        \end{array}$
    \end{center}
	
	Following Thomas' notation, we sort the roots of $f_n(X) = F_n(X,1)$
	such that $\lambda_0 > \lambda_1 > \lambda_2$ and due to Thomas \cite{thomas} we know that
	$$ \lambda_0 \eqqcolon \lambda, \quad \lambda_1 = -\frac{1}{\lambda+1}, 
	\quad \lambda_2 = - \frac{\lambda + 1}{\lambda}. $$
	The roots of $f_n$ do of course depend on $n$ but we omit that notation for better readability.
	
	Levesque and Waldschmidt \cite{lev_wald} ``twisted'' the polynomials by
	an exponent $t$, i.e. they looked at 
	\begin{equation}
		\label{eq-lev-wald}
		F_{n,t}(X,Y) = (X-\lambda^tY) (X-\lambda_1^tY) (X-\lambda_2^tY) 
			= \pm1.
	\end{equation}
	which is irreducible in $\ZZ[X,Y]$.
	The minimal polynomial of $\lambda^t$ is $F_{n,t}(X,1)$.

	Levesque and Waldschmidt proved that for $n$ and $t$ exceeding
	some effectively computable constant, there are no non-trivial 
	solutions. Further, they provided a list of small solutions and raised the
	question whether it contains all solutions.
	
	In this paper, we affirm their question, i.e. we solve the problem for 
	all possible pairs $(n,t)$ and prove that their list is complete.

    It is straightforward to check that
	\begin{align*}
		F_{-n-1,t}(X,Y) &= F_{n,t}(-Y,-X),\\
		F_{n,-t}(X,Y) &= F_{n,t}(-Y,-X),
	\end{align*}
	therefore, by solving the family of equations for $n\geq0$ and $t>0$,
    we actually solve it for all $n\in\ZZ$ and $t\in\ZZ\backslash\{0\}.$
    If $t=1,$ we get \eqref{eq-shanks}, therefore throughout this paper
    we may even assume that $t \geq 2$.

	\begin{theorem}
		\label{theo-results}
		Let $n,t\in\ZZ$ and $t\neq 0$. The only possible solutions $(x,y)\in \ZZ^2$ to \eqref{eq-lev-wald} 
        are the trivial solutions 
        \begin{itemize}
            \item $(x,y)=(0,\pm1)$ and $(x,y)=(\pm1,0)$ for all $(n,t)$,
            \item $(x,y) =(\pm1,\mp1)$ for $t=1$,
        \end{itemize} 
        and the ones given in the following table:
		
		\begin{tabular}{c|ccccccc}
			$(n,t)$ & & $(\pm x,\pm y)$ & & & & & \\
			\hline
            $(0,1)$&$(-9,5)$&$(-4,9)$&$(-2,1)$&$(-1,2)$&$(1,1)$&$(5,4)$&\\
            $(1,1)$&$(-3,2)$&$(1,3)$&$(2,1)$&&&&\\
            $(3,1)$&$(-9,7)$&$(-2,9)$&$(7,2)$&&&&\\
			$(0,2)$&$(1,1)$&$(1,5)$&$(2,1)$&$(3,1)$&$(3,2)$&$(13,4)$&$(14,9)$\\
			$(1,2)$&$(2,1)$&$(3,1)$&$(7,2)$&$(7,3)$&&&\\
			$(2,2)$&$(2,1)$&$(7,1)$&&&&&\\
			$(4,2)$&$(3,2)$&&&&&&\\
            $(0,3)$&$(2,1)$&&&&&&\\
			$(0,5)$&$(3,1)$&$(19,-1).$&&&&&
		\end{tabular}
	\end{theorem}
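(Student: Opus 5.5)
By the symmetries $F_{-n-1,t}(X,Y)=F_{n,t}(-Y,-X)$ and $F_{n,-t}(X,Y)=F_{n,t}(-Y,-X)$, together with Thomas' and Mignotte's resolution of the case $t=1$, it suffices to treat $n\ge 0$ and $t\ge 2$. The plan follows the Baker-type strategy for parametrised families. For large $n$ we use the asymptotics of the units to obtain good lower bounds, and for small $n$ we make everything numerically explicit. The final reduction uses Baker--Davenport/LLL on the resulting linear forms in logarithms.

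\textbf{Step 1: setup.} Let $(x,y)$ be a solution with $|y|\ge 2$; the cases $|y|\le 1$ give the trivial solutions directly. Put $\beta_i=x-\lambda_i^t y$. Then $\beta_0\beta_1\beta_2=\pm1$, so each $\beta_i$ is a unit in $\ZZ[\lambda]$. Since $\lambda,\lambda+1$ generate the unit group of $\ZZ[\lambda]$ up to sign (Thomas), we may write $\beta_0=\pm\lambda^{a}(\lambda+1)^{b}$, and the conjugates follow via $\lambda_1=-1/(\lambda+1)$ and $\lambda_2=-(\lambda+1)/\lambda$.

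Next we use the asymptotics $\lambda\approx n+1/n$, $\lambda_1\approx -1/n$, $\lambda_2\approx -1-1/n$. These show that the $\lambda_i^t$ are well separated: the pairwise differences are $\gg 1$ except for the pair $(\lambda_1^t,\lambda_2^t)$ when $n$ is small. Hence exactly one $\beta_j$ is tiny, with $|\beta_j|\ll |y|^{-2}$, and the other two are of size about $|y|\,|\lambda_i^t-\lambda_j^t|$.

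\textbf{Step 2: Siegel identity and linear form.} Apply Siegel's identity
\[
(\lambda_i^t-\lambda_k^t)\beta_j+(\lambda_k^t-\lambda_j^t)\beta_i+(\lambda_j^t-\lambda_i^t)\beta_k=0 .
\]
This yields a linear form
\[
\Lambda=\log\left|\frac{(\lambda_k^t-\lambda_j^t)\beta_i}{(\lambda_i^t-\lambda_j^t)\beta_k}\right|
= a\log\left|\frac{\lambda_i}{\lambda_k}\right| + b\log\left|\frac{\lambda_i+1}{\lambda_k+1}\right| + \log\left|\frac{\lambda_k^t-\lambda_j^t}{\lambda_i^t-\lambda_j^t}\right|,
\]
with $0<|\Lambda|\ll |y|^{-3}$. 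Comparing sizes of the $\beta_i$ gives $|a|,|b|\ll \log|y|/\log n$. It also relates $t$ to $a$ and $b$: the dominant term forces one exponent to be approximately $\pm t$ plus a bounded correction. Consequently $\log|y|\gg t\log n$, and in fact $|y|\ge c\,n^{t-1}$ roughly, because $\beta_j$ tiny forces $x/y\approx\lambda_j^t$.

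The last logarithm has height about $t\log n$. A direct application of Matveev's bound to three logarithms therefore gives
\[
\log|y|\ll t\log^3 n\,\log(\max(|a|,|b|)).
\]
This is not yet enough, since $t$ appears on both sides.

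\textbf{Step 3: removing the $t$-dependence (main obstacle).} The key step is to exploit that for large $t$ the term $\log|(\lambda_k^t-\lambda_j^t)/(\lambda_i^t-\lambda_j^t)|$ equals $t\log|\lambda_k/\lambda_i|$ up to an exponentially small error, when $\lambda_k$ is the larger root. So we replace $\Lambda$ by the two-logarithm form
\[
\Lambda'=(a-t)\log\left|\frac{\lambda_i}{\lambda_k}\right|+b\log\left|\frac{\lambda_i+1}{\lambda_k+1}\right|,
\]
with $|\Lambda'|\ll |y|^{-3}+|\lambda_j/\lambda_k|^{t}$. Either $\Lambda'=0$, which by multiplicative independence of $\lambda$ and $\lambda+1$ forces explicit small cases, or we apply Laurent's two-logarithm bound. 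That bound gives $\log|\Lambda'|\gg -\log^2 n\,\log^2(t)$, and comparing with $|\Lambda'|\ll n^{-t}$ bounds $t$ absolutely by an explicit constant. The regime $\lambda_1^t$ vs.\ $\lambda_2^t$, where the ratio is close to $1/n^t$ but $\lambda_2\approx -1$, needs separate care. Here I would instead use the tiny $\beta_j$ directly via Padé/hypergeometric-free estimates from Levesque--Waldschmidt. With $t$ bounded, the problem becomes a one-parameter family in $n$ for each fixed $t$. We handle this as Thomas did: for $n$ larger than an explicit bound, the Baker bound in terms of $\log n$ contradicts the lower bound $|y|\gg n^{t-1}$. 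For the remaining $n$, each Thue equation is solved individually with a standard solver (or with Baker--Davenport reduction on $\Lambda$ using continued fractions, which is quick since only two or three logs are involved). This recovers exactly the table in Theorem~\ref{theo-results}. I expect the hardest part to be obtaining a uniform explicit bound on $t$ for small $n$ such as $n\in\{0,1,2,3\}$. There $\lambda_1,\lambda_2$ have comparable size and the gap term is weak, so a careful case split on which $\beta_j$ is small will be required.
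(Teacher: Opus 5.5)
There is a genuine gap in the mechanism that is supposed to bound $n$. Your final step rests on $|y|\ge c\,n^{t-1}$, which you never prove: that $\beta_j$ is tiny only says $x/y\approx\lambda_j^t$, and nothing about the size of $y$. Even if it were true, it could not give a contradiction. It only says $\log|y|\ge (t-1)\log n+O(1)$, whereas your Matveev-type estimate is $\log|y|\ll t(\log n)^3\log\max\{|a|,|b|\}$, and these two are compatible for every $n$. What is needed is an \emph{exponential} gap principle, and the paper extracts it from the two-logarithm form itself. Since $\Lambda'=A\log\lambda+B\log|\lambda_2|$ satisfies $|\Lambda'|<c_9|\lambda\lambda_2^2|^{-t}$, with $\log\lambda\approx\log n$ and $\log|\lambda_2|\approx 1/n$, one gets $|B|\approx |A|\,\lambda\log\lambda$ (Lemma~\ref{lem-AB-lambda}). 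Moreover $A\neq 0$ for a non-trivial solution, because for $A=0$ the same inequality forces $B=0$ once $n$ is not tiny. Combined with $|B|\asymp 3(\log|y|/\log\lambda+t)$ (Lemma~\ref{lem-B-logy-log-lam-t}), this gives $\log|y|/\log n+t\gg n\log n$. It is this lower bound, played against Laurent's estimates (which are polylogarithmic in $n$), that bounds $n$.

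Your absolute bound for $t$ also fails. In Laurent's estimate for $\Lambda'$, $b'$ has size $(|A|+|B|)/\log n$, and $|B|$ is governed by $\log|y|/\log\lambda+t$, not by $t$ alone. Even when $B\ll t$, comparing with $\log|\Lambda'|<-t\log n+O(1)$ yields only $t\ll \log n\,(\log t)^2$; in the paper this is $t/\log n<173(\log(B/\log n))^2$. That bound grows with $n$, so you cannot first reduce to finitely many $t$ and then treat one-parameter families in $n$. The paper instead distinguishes two cases:
\begin{itemize}
\item $t\ge \log|y|/\log\lambda$: here $B\ll t$, and the gap principle gives $n\le 157442$.
\item $t<\log|y|/\log\lambda$: here a \emph{second} application of Laurent is needed to bound $\log|y|$. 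It is applied to $\Lambda$ rewritten as $\log|\varphi|+B\log|\lambda_2|$, with $\varphi=\lambda^A\delta$ and $h(\varphi)\ll\max\{A,t\}\log n$.
\end{itemize}
Only after $n$ is bounded does the paper obtain bounds on $t$, and then the per-$n$ bounds $\bar t(n)$ via Baker--W\"ustholz, Laurent and continued fractions.

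Step~1 is also false as stated. The case $|y|=1$ does not give only trivial solutions: ten of the seventeen listed pairs with $t\ge 2$ have $|y|=1$, for example $(7,1)$ for $(n,t)=(2,2)$ and $(19,-1)$ for $(0,5)$. So only $x=0$ or $y=0$ can be discarded a priori. Finally, the separate Pad\'e-type treatment you envisage for $\lambda_1^t$ versus $\lambda_2^t$ is unnecessary. All three types $j$ are handled uniformly by shifting both unit exponents by $\pm t$.
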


	
    We prove this Theorem in two main steps:
    \begin{itemize}
        \item In Section~\ref{sec-tw}, we find an upper bound for the parameter $n$.
        To minimize this bound, we use Laurent's work from 2008 \cite{laurent}
        instead of the ``classical'' results of Baker-Wüstholz and Matveev.
        The crucial step is to apply it twice, both on the linear forms in logarithms
        $\Lambda$ and $\Lambda'$, as seen in Lemmas~\ref{lem-bound-L'} and 
        \ref{lem-bound-Lambda}.
        The other important relations are established in 
        Lemma~\ref{lem-B-logy-log-lam-t} and Lemma~\ref{lem-AB-lambda}, namely
        $$|A| n \log n \sim |B| \sim 
            3 \left( \frac{\log|y|}{\log\lambda} + t \right).$$
        \item The bounds for $n$ and $t$ are too large to allow a direct 
        resolution of all associated Thue equations, so in Section~\ref{sec-small}, 
        we introduce additional reduction arguments to 
        further restrict the set of admissible pairs $(n,t)$ for which 
        \eqref{eq-lev-wald} may have non-trivial solutions.
        The remaining Thue equations can then be solved by the standard
        methods.
    \end{itemize}

    The second step requires extensive computer-assisted calculations.
    The code used for these computations is publicly available at
    \url{https://git.sbg.ac.at/b1065846/complete-resolution-of-a-family-of-twisted-thue-equations}.

\section{Auxiliary results}
	
	In this section we provide some useful tools that we will need later on.

	\begin{lemma}
		\label{lem-log-bound}
		Let $z \in \CC$, $z\neq0$, and $a \in (0,1]$ with $|z-1| \leq a$.
		Then
		$$ |\log z| \leq |z-1| \frac{- \log(1-a)}{a}.$$
	\end{lemma}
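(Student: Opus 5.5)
We prove the bound through the power series of the principal logarithm. Put $w = z-1$, so $|w| \leq a \leq 1$. Since $z \neq 0$, the case $w=-1$ is excluded. For $|w|<1$ the principal branch satisfies
$$\log z = \log(1+w) = \sum_{k\geq 1} \frac{(-1)^{k+1} w^k}{k}.$$
The triangle inequality together with $|w|^{k-1} \leq a^{k-1}$ gives
$$|\log z| \leq \sum_{k\geq1}\frac{|w|^k}{k} = |w| \sum_{k\geq 1} \frac{|w|^{k-1}}{k} \leq |w|\sum_{k\geq1}\frac{a^{k-1}}{k}.$$
For $0<a<1$, the last series equals $\frac{1}{a}\sum_{k\ge1} a^k/k = \frac{-\log(1-a)}{a}$, which is exactly the claimed bound. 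If $w=0$, both sides vanish and there is nothing to prove.

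The boundary case $a=1$ needs separate care. There the right-hand side is $+\infty$, because $-\log(1-a)/a$ diverges as $a\to1^-$. With the natural convention that the bound is $+\infty$, the statement is trivially true whenever $|z-1|>0$.

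If the convention is unwanted, one can argue directly. For $a=1$ and $|w|<1$, apply the estimate above with the smaller parameter $a' = |w| < 1$. This gives $|\log z|\le -\log(1-|w|)$, which is finite and at most the (infinite) right-hand side. The remaining points, $|w|=1$ with $w\neq -1$, are covered by the infinite bound, so no further work is needed there.

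The only subtle point is checking that the series manipulation is legitimate. This amounts to absolute convergence for $|w|<1$ and the identification of the principal branch with the series on the unit disc around $1$, both of which are standard. I expect no real obstacle beyond stating the convention at $a=1$ explicitly.
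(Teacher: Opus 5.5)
Your proof is correct and follows the paper's argument: expand the principal logarithm as a power series in $z-1$, use the triangle inequality with $|z-1|^{k-1}\le a^{k-1}$, and sum $\sum_{k\ge1} a^{k-1}/k = -\log(1-a)/a$. You also treat the endpoint $a=1$ explicitly, where the right-hand side is infinite and the series need not converge absolutely; the paper passes over this case silently.
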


	\begin{proof}
        We compute
		\begin{align*}
			|\log z| &= 
			\left| \sum_{k=1}^{\infty} 
			\frac{(-1)^{k-1} (z-1)^k}{k} \right| \\
			&\leq |z-1| \sum_{k=1}^{\infty} \frac {a^{k-1}}{k} \\
			&= |z-1| \sum_{k=1}^{\infty} \frac1a \frac{(-1)^{k}(-a)^k}{k} \\
			&= |z-1| \frac{- \log(1-a)}{a} .
		\end{align*}
	\end{proof}

	Choosing $a = \frac12$ we see that $|z-1| \leq \frac12$ implies 
	$$ |\log z| \leq |z-1| \frac{- \log(\frac12)}{\frac12} =
	2 |z-1| \cdot \log 2 \leq 2 |z-1|, $$
	thus we have

	\begin{cor}
		\label{cor-log-12}
		Let $z \in \CC$, $z\neq0$, with $|z-1| \leq \frac12$.		
		Then
		$$|\log z| \leq 2 |z-1|.$$
	\end{cor}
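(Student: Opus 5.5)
This is an immediate specialization of Lemma~\ref{lem-log-bound}, so the plan is simply to instantiate that lemma with $a=\tfrac12$ and then bound the resulting constant by $2$. Since $\tfrac12\in(0,1]$ and the hypothesis $|z-1|\le\tfrac12$ is exactly the hypothesis of Lemma~\ref{lem-log-bound} with this choice of $a$, we may apply the lemma directly. Here $\log$ denotes the principal branch, given on the disc $|z-1|<1$ by its power series; on the closed disc of radius $\tfrac12$ this series converges absolutely, so the lemma applies without any issue.

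With $a=\tfrac12$, Lemma~\ref{lem-log-bound} gives
\[
|\log z|\le |z-1|\,\frac{-\log(1-\tfrac12)}{\tfrac12}=|z-1|\cdot 2\log 2 .
\]
It then remains only to observe that $\log 2<1$, which holds because $2<e$. Hence $2\log 2<2$, and therefore $|\log z|\le 2|z-1|$.

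There is no real obstacle here. The only point needing care is that the bound is stated for the principal logarithm. This is the same branch given by the series used in the proof of Lemma~\ref{lem-log-bound}, so the two statements refer to the same function.
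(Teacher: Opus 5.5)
Your proposal is correct and is exactly the paper's argument: apply Lemma~\ref{lem-log-bound} with $a=\tfrac12$ to get $|\log z|\le 2\log 2\,|z-1|$, then use $\log 2<1$.
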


	The following useful lemma is due to Peth\H{o} and de Weger \cite{pethoe-deweger}:
	
	\begin{lemma}
		\label{lem-petho-dew}
		Let $a,b \geq 0$, $k \geq 1$ and $x \in \RR$ be the largest solution
		of \\
        $x = a + b (\log x)^k$. If $b > \left( \frac{e^2}k \right)^k$, then
		$$ x < 2^k \left( a^{\frac 1k} + b^{\frac 1k} 
		\log \left( k^k b \right) 
		\right) ^k $$
		and if $b \leq \left( \frac{e^2}k \right)^k$ then
		$$ x \leq 2^k \left( a^{\frac 1k} + 2 e^2 \right)^k. $$
	\end{lemma}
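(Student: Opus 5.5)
We reduce to the case $k=1$ by substituting $y = x^{1/k}$, and then bound the linear equation with the elementary inequality $\log u \le u/e$ for $u>0$.

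\emph{Preliminaries.} If $x \le 1$, both claimed bounds hold trivially, since each right-hand side exceeds $1$. In the second case the right-hand side is at least $(4e^2)^k$. In the first case $b > (e^2/k)^k$ gives $B > e^2$ below, so $B\log B > 2e^2 > 1$. Hence we may assume $x>1$, so $\log x>0$ and $(\log x)^k$ is well defined and positive.

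\emph{Reduction to $k=1$.} Put $y = x^{1/k} > 1$, $A = a^{1/k}$ and $B = k\,b^{1/k}$. Then $\log x = k\log y$, and the equation becomes
$$y^k = a + b\,(k\log y)^k = A^k + (B\log y)^k .$$
Since $k \ge 1$, the map $t\mapsto t^{1/k}$ is subadditive on $[0,\infty)$, so $(u+v)^{1/k} \le u^{1/k} + v^{1/k}$. Therefore
$$y \le A + B\log y .$$
Note that
$$\log(k^k b) = k\log\bigl(k\,b^{1/k}\bigr) = k\log B,
\qquad\text{so}\qquad
b^{1/k}\log(k^k b) = B\log B .$$
Also $b > (e^2/k)^k$ is equivalent to $B > e^2$. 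After raising to the $k$-th power, the two claims therefore read
$$y < 2(A + B\log B) \ \text{ if } B > e^2,
\qquad
y \le 2(A + 2e^2) \ \text{ if } B \le e^2 .$$

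\emph{Case $B>e^2$.} Write $\log y = \log B + \log(y/B)$. Applying $\log u \le u/e$ with $u = y/B$ gives $B\log(y/B) \le y/e$. Hence
$$y \le A + B\log B + \frac{y}{e},$$
so
$$y \le \frac{e}{e-1}\,(A + B\log B) < 2(A + B\log B).$$
This is strict, because $A + B\log B > 0$ when $B > e^2$.

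\emph{Case $B \le e^2$.} Here $y \le A + e^2\log y$, since $\log y > 0$. Split $\log y = \log(2e^2) + \log\bigl(y/(2e^2)\bigr)$ and apply $\log u \le u/e$ with $u = y/(2e^2)$. This gives
$$e^2\log y \le e^2\log(2e^2) + \frac{y}{2e}.$$
Consequently
$$y\Bigl(1 - \frac{1}{2e}\Bigr) \le A + e^2(2 + \log 2).$$
Numerically $1/(1 - \frac{1}{2e}) < 1.23$ and $1.23\,(2 + \log 2) < 4$. Therefore
$$y < 1.23\,A + 4e^2 \le 2(A + 2e^2),$$
as required.

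\emph{Main obstacle.} The step needing care is choosing the splitting point of the logarithm so that the constants come out as $2$ and $2e^2$. The naive route, via $y \le 2A$ or else $y < 2B\log y$ and then inverting $y/\log y$, loses a $\log\log B$ term and does not give $2B\log B$. Splitting $\log y$ at $B$ (respectively at $2e^2$) and using $\log u \le u/e$ avoids this loss.
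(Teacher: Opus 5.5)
Your proof is correct. The paper gives no argument of its own; it only points to Smart's book (Appendix~B, Lemma~B.1) and credits the result to Peth\H{o} and de~Weger. So yours is an independent, self-contained route.

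The two ingredients both check out. First, you pass to $y=x^{1/k}$ and use subadditivity of $t\mapsto t^{1/k}$ to get the linear inequality $y\le A+B\log y$ with $A=a^{1/k}$, $B=kb^{1/k}$. Second, you split $\log y$ at $B$ (respectively at $2e^2$) before applying $\log u\le u/e$. The supporting facts are also right: the identity $b^{1/k}\log(k^kb)=B\log B$, the equivalence $b>(e^2/k)^k\iff B>e^2$, and the numerics $e/(e-1)<2$, $(1-\tfrac{1}{2e})^{-1}<1.23$ and $1.23(2+\log 2)<4$.

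Compared with quoting the reference, your route buys two things. It gives slightly sharper constants: $y\le \tfrac{e}{e-1}(A+B\log B)$ in place of $2(A+B\log B)$, and about $1.23A+3.3e^2$ in place of $2A+4e^2$. More usefully, you only ever use $y^k\le A^k+(B\log y)^k$. So the argument applies verbatim to every $x>1$ satisfying the \emph{inequality} $x\le a+b(\log x)^k$, not just to the largest root of the equation.

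That inequality form is exactly how the paper uses the lemma, for instance to pass from $\log|y|<c_{11}t\log\log|y|$ to a bound on $\log|y|$. Your version therefore removes the implicit step of comparing a solution of an inequality with the largest solution of the corresponding equation.
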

	\begin{proof}
		See \cite{smart}, Appendix B, Lemma B.1.
	\end{proof}

	Let $\alpha$ be an algebraic number with minimal polynomial
	$$a_d X^d + \dots + a_1 X + a_0$$ and conjugates 
	${\alpha = \alpha^{(1)}, \dots, \alpha^{(d)}}$.
	Then the absolute logarithmic Weil height of $\alpha$ is defined by
	$$ h(\alpha) = \frac 1d \left( \log a_d + 
	\sum_{i = 1}^{d} \log \max \left\{1, \abs{\alpha^{(i)}} \right\}
	\right).$$
	For simplicity, we refer to the absolute logarithmic Weil height simply as the height.
	
	From \cite[Chapter 3]{waldschmidt-dioph-approx} we get some properties 
	for the height.
	\begin{lemma} [Properties of the height]
		\label{lem-height}
		Let $r,s \in \ZZ$ and let $\alpha$ be a non-zero algebraic number.
		Then
		\begin{enumerate} [label = (\alph*)]
			\item $ h(\alpha \pm \beta) \leq \log 2 + h(\alpha) + h(\beta) ,$
			\item $ h(\alpha \beta) \leq h(\alpha) + h(\beta) ,$
			\item $ h(\alpha^{\frac rs}) = \abs{\frac rs} h(\alpha),$
			\item $h(\frac rs) = \max \left\{ \log |r|, \log |s| \right\}, $
			\item $ h(\alpha) = 0$ if and only if $\alpha$ is a root of unity.
		\end{enumerate}
	\end{lemma}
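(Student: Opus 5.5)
The plan is to move away from the definition via the minimal polynomial and work with the equivalent formula in terms of places. For $\alpha$ in a number field $K$,
$$ h(\alpha)=\frac{1}{[K:\QQ]}\sum_{v\in M_K} d_v \log\max\{1,|\alpha|_v\}, $$
where $d_v=[K_v:\QQ_v]$ is the local degree and $|\cdot|_v$ extends the usual absolute value on $\QQ$. I would first check that this agrees with the definition given above when $K=\QQ(\alpha)$. The archimedean places contribute $\sum_i \log\max\{1,|\alpha^{(i)}|\}$. By Gauss's lemma applied to the primitive minimal polynomial, the non-archimedean places contribute $\log a_d$; equivalently, $d\,h(\alpha)=\log M(\alpha)$, where $M$ is the Mahler measure. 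The formula is independent of $K$ because local degrees add up correctly in extensions, so I may always work in a common field containing $\alpha$ and $\beta$.

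From this, (b) follows at once from $\max\{1,|\alpha\beta|_v\}\le \max\{1,|\alpha|_v\}\max\{1,|\beta|_v\}$. For (a), at a non-archimedean place the ultrametric inequality gives $\max\{1,|\alpha\pm\beta|_v\}\le \max\{1,|\alpha|_v\}\max\{1,|\beta|_v\}$. At an archimedean place the triangle inequality gives the same bound with an extra factor $2$. Summing, with $\sum_{v\mid\infty} d_v=[K:\QQ]$, produces exactly the additive term $\log 2$.

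For (c) I would proceed in three steps.
\begin{itemize}
\item For $m\in\NN$, $h(\alpha^m)=m\,h(\alpha)$, since $\log\max\{1,|\alpha|_v^m\}=m\log\max\{1,|\alpha|_v\}$.
\item $h(\alpha^{-1})=h(\alpha)$. This follows from $\log\max\{1,x\}-\log\max\{1,x^{-1}\}=\log x$ together with the product formula $\sum_v d_v\log|\alpha|_v=0$.
\item For a general exponent $r/s$, apply the integer case to $\beta=\alpha^{r/s}$, any choice of root. Then $s\,h(\beta)=h(\beta^s)=h(\alpha^r)=|r|\,h(\alpha)$.
\end{itemize}

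For (d), write $r/s$ in lowest terms, which is implicitly required. The minimal polynomial is $sX-r$, up to sign, so the definition gives $\log|s|+\log\max\{1,|r/s|\}=\max\{\log|r|,\log|s|\}$.

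For (e), the direction ``root of unity implies height $0$'' is immediate, since every $|\alpha|_v=1$. The converse is Kronecker's theorem, and I expect it to be the only real obstacle. Suppose $h(\alpha)=0$. Then $|\alpha|_v\le 1$ at every non-archimedean place, so $\alpha$ is an algebraic integer, and all its conjugates have modulus at most $1$. The same holds for every power $\alpha^m$, of degree at most $d$. The coefficients of the minimal polynomials of the $\alpha^m$ are integers bounded by $\binom{d}{k}$, so only finitely many polynomials occur. Hence $\alpha^m=\alpha^{m'}$ for some $m\ne m'$, and since $\alpha\ne0$, $\alpha$ is a root of unity. All of this is standard and can be cited from \cite[Chapter 3]{waldschmidt-dioph-approx}.
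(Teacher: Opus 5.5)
Your argument is correct and is the standard one. The paper gives no proof of its own and only cites \cite[Chapter 3]{waldschmidt-dioph-approx}, where these properties are derived as you do: from the place-by-place (Mahler measure) formula for $h$, the product formula for $h(\alpha^{-1})=h(\alpha)$, and Kronecker's finiteness argument for (e). Your remark that (d) needs $\gcd(r,s)=1$ is a genuine correction to the printed statement, since e.g.\ $h(2/4)=\log 2\neq\log 4$. Similarly, in (c) you should write $|s|\,h(\beta)$ when $s<0$.
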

	
	We will need continued fractions, in particular the following lemma
	(see \cite[Theorem 184]{hardy_wright}):
	\begin{lemma}[Legendre]
		\label{theo-convergent}
		Let $x \in \RR$ and $p, q$ positive integers such that
		$$ \abs{x - \frac{p}{q}} < \frac{1}{2q^2}. $$
		Then $\frac pq$ is a convergent of the continued fraction of $x$.
	\end{lemma}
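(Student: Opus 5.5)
We plan to follow the classical argument from the theory of continued fractions. First, we may assume $\gcd(p,q)=1$: if $p/q=p'/q'$ in lowest terms, then $q'\le q$, so $\abs{x-p'/q'}<1/(2q^2)\le 1/(2q'^2)$, and "being a convergent" depends only on the rational value. If $x=p/q$ there is nothing to prove. Otherwise write
$$ x-\frac pq=\frac{\varepsilon\theta}{q^2},\qquad \varepsilon\in\{\pm1\},\quad 0<\theta<\tfrac12 .$$

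Next we expand $p/q$ as a finite continued fraction $[a_0;a_1,\dots,a_m]$, with convergents $p_k/q_k$, so $p_m=p$ and $q_m=q$. Since the last partial quotient may always be rewritten as $a_m=(a_m-1)+\frac11$ when $a_m\ge 2$, or merged into the previous one when $a_m=1$, we can choose the length $m$ so that $(-1)^m=\varepsilon$. We then define $\omega$ by
$$ x=\frac{\omega p_m+p_{m-1}}{\omega q_m+q_{m-1}} .$$
Using the identity $p_{m-1}q_m-p_mq_{m-1}=(-1)^m$, we compute
$$ x-\frac{p_m}{q_m}=\frac{(-1)^m}{q_m(\omega q_m+q_{m-1})}. $$
Comparing this with the first display gives $\theta=q/(\omega q+q_{m-1})$, that is,
$$ \omega=\frac1\theta-\frac{q_{m-1}}{q} > 2-1=1, $$
because $q_{m-1}\le q_m=q$.

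Since $\omega>1$, its continued fraction expansion $\omega=[b_0;b_1,\dots]$ has $b_0\ge1$. Hence $x=[a_0;a_1,\dots,a_m,b_0,b_1,\dots]$ is a legitimate continued fraction expansion of $x$ with positive partial quotients after the first. By uniqueness of the expansion (up to the usual ambiguity at a terminal quotient, which doesn't affect the earlier convergents), $p_m/q_m=p/q$ is one of its convergents.

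The only delicate point is the parity adjustment together with the bound $\omega>1$. The bound uses both $\theta<\tfrac12$ and $q_{m-1}\le q_m$, and it requires a little care in the edge case $m=0$, where $q_{-1}=0$. Everything else is routine manipulation of the convergent recurrences.
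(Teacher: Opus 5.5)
Your proof is correct and is essentially the classical argument of Hardy--Wright, Theorem~184, which the paper cites rather than proving. One point deserves to be made explicit: the final step relies on the strict inequality $\omega>1$, not merely on $b_0\ge 1$. If $\omega=1$, then $[a_0;\dots,a_m,1]=[a_0;\dots,a_{m-1},a_m+1]$, so $p_m/q_m$ would be exactly the spurious extra convergent of the non-standard expansion, contrary to your remark that the terminal ambiguity ``doesn't affect the earlier convergents.''
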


\subsection{Linear forms in logarithms}
\label{sec-lin-log}
	Let $\alpha_1, \dots, \alpha_n$ be algebraic numbers not equal to 0 or 1,
	$K = \QQ(\alpha_1, \dots, \alpha_n)$ and $d = [K:\QQ]$.
	Further let
	$$ \Lambda = b_1 \log \alpha_1 + \dots b_n \log \alpha_n 
	\neq 0 $$
	with $b_i \in \ZZ$.
	We define the modified height
	$$h_m(\alpha) = \max\left\{h(\alpha), \frac{|\log \alpha|}{d}, 
		\frac 1d \right\}.$$
	
	\begin{theorem}[Baker--Wüstholz]
		\label{theo-baker-wust}
		Let ${b_1, \dots, b_n \in \ZZ,}$ and\\
		$B = \max\{|b_1|, \dots, |b_n|\} \geq 3$.		
		Then we have
		$$ \log |\Lambda| > - c \prod_{i=1}^{n} h_m(\alpha_i) \log B $$
		with 
		$$ c = c(n,d) = 18 (n+1)! n^{n+1} (32d)^{n+2} \log (2nd). $$
	\end{theorem}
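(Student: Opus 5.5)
This statement is not something we will reprove. It is the theorem of Baker and Wüstholz (\emph{Logarithmic forms and group varieties}, J.\ reine angew.\ Math.\ 442, 1993), and the plan is to check that the formulation above is a faithful transcription of it. Their result is stated for a nonvanishing linear form $\Lambda = b_1\log\alpha_1+\dots+b_n\log\alpha_n$ with fixed determinations of the logarithms. It gives
$$\log|\Lambda| > -C(n,d)\, h'(\alpha_1)\cdots h'(\alpha_n)\, \log B,$$
where $h'(\alpha)=\max\{h(\alpha), |\log\alpha|/d, 1/d\}$, $B=\max|b_i|$, and $C(n,d)=18(n+1)!\,n^{n+1}(32d)^{n+2}\log(2nd)$.

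The check consists of three comparisons:
\begin{itemize}
\item their $h'$ coincides with our modified height $h_m$;
\item their height is the absolute logarithmic Weil height used in Lemma~\ref{lem-height};
\item their hypothesis $B\ge 3$ (or $\log B$ replaced by $\max\{\log B,1/d\}$ in some versions) matches ours, so that $\log B\ge 1$ and no correction term is needed.
\end{itemize}

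For completeness, here is the structure of their argument, in the order one would carry it out. Assume $|\Lambda|$ is smaller than the claimed bound. Then:
\begin{enumerate}
\item Reduce to the case where $\alpha_1,\dots,\alpha_n$ are multiplicatively independent. Otherwise, eliminate a dependence, which lowers $n$, and use induction.
\item Construct, by Siegel's lemma, an auxiliary polynomial in $n$ variables with integer coefficients of controlled size. It should vanish to high order at many points $(\ell\log\alpha_1,\dots,\ell\log\alpha_n)$ of the commutative group $\mathbb{G}_a\times\mathbb{G}_m^{n}$, along the hyperplane determined by $\Lambda$.
\item Extrapolate, using the smallness of $\Lambda$, Cauchy estimates, and the product formula (Liouville inequality), to obtain vanishing at many more points and to higher order.
\item Apply Wüstholz's multiplicity estimate on group varieties to show this amount of vanishing is impossible unless an algebraic subgroup obstructs it.
\item Show that such an obstructing subgroup would force a linear relation contradicting the independence from step~1.
\end{enumerate}
The explicit constant $c(n,d)$ comes from optimizing the parameters (degrees, number of derivatives, number of points) in this scheme.

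The main obstacle in a genuine proof is step~4, the zero/multiplicity estimate on group varieties with fully explicit dependence. This is exactly the deep input of Baker–Wüstholz, and there is no shortcut through the tools of this paper. Accordingly, the proof given here will be the citation, together with the remark that the hypotheses $\alpha_i\neq 0,1$, $\Lambda\neq 0$ and $B\ge 3$ are those of the original theorem.
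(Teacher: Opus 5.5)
Your proposal is correct and matches what the paper does. The paper gives no proof of this statement and simply quotes the Baker--Wüstholz theorem (J.\ reine angew.\ Math.\ 442, 1993). Your transcription check is the right one: $h_m$ is exactly their $h'$, and for $B\ge 3$ their factor $\max\{h(L),1/d\}$ is at most $\log B$, since $h(L)\le\log B$ and $1/d\le 1<\log 3$.
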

	

	If we have a linear form that consists of only two logarithms, there is
	a result by Laurent from 2008 \cite{laurent} that gives us a much 
	smaller constant.
	
	Let $\alpha_1, \alpha_2$ be two non-zero algebraic numbers and 
	$b_1, b_2$ positive integers. We look at the linear form in two logarithms
	$$ \Lambda = b_2 \log \alpha_2 - b_1 \log \alpha_1.	 $$
	
	\begin{theorem}
		\label{theo-laurent-2}
		Let $a_1, a_2, h, \rho$ and $\mu$ be real numbers with $\rho>1$ and
		$\frac13 \leq \mu \leq 1$. Set
		\begin{align*}
			\sigma = \frac{1 + 2 \mu - \mu^2}{2}, \quad
			\lambda = \sigma \log \rho, \quad
			H = \frac{h}{\lambda} + \frac{1}{\sigma},\\
			\omega = 2 \left( 1 + \sqrt{1 + \frac{1}{4 H^2}} \right), \quad
			\theta = \sqrt{1 + \frac{1}{4 H^2}} + \frac{1}{2H}.
		\end{align*}
		Consider the linear form $\Lambda = b_2 \log \alpha_2 - b_1 \log \alpha_1$, 
		where $b_1$ and $b_2$ are positive integers.
		Suppose that $\alpha_1$ and $\alpha_2$ are multiplicatively independent.
+		Set\\ $D~=~[\QQ(\alpha_1, \alpha_2) \colon \QQ] /
		[\RR(\alpha_1,\alpha_2)\colon\RR]$ and assume that
		\begin{enumerate}[label=(\roman*)]
			\item $h \geq \max \left\{ 
				D \left(\log \left(\frac{b_1}{a_2} + \frac{b_2}{a_1}\right) 
					+ \log \lambda + 1.75 \right) + 0.06,
				\lambda, 
				\frac{D \log 2}{2} \right\}$
			\item $a_i \geq \max \{ 1, \rho |\log \alpha_i| - \log |\alpha_i|
					+ 2 D h(\alpha_i) \} \quad (i=1,2)$
			\item $a_1a_2 \geq \lambda^2.$
		\end{enumerate}
		Then
		$$\log|\Lambda| \geq - C \left( h + \frac{\lambda}{\sigma}\right)^2 a_1 a_2
			- \sqrt{\omega\theta} \left(h + \frac{\lambda}{\sigma}\right)
			- \log \left(C' \left(h + \frac{\lambda}{\sigma}\right)^2 a_1 a_2\right) $$
		with
		\begin{align*}
			C &= \frac{\mu}{\lambda^3 \sigma} \left(\frac{\omega}{6} + \frac 12 
				\sqrt{\frac{\omega^2}{9} + \frac{8 \lambda\omega^{\frac54} \theta^{\frac14}} 
					{3 \sqrt{a_1a_2}H^{\frac12}}
					+ \frac43 \left(\frac{1}{a_1} + \frac{1}{a_2}\right)
					\frac{\lambda\omega}{H}
				}\right)^2,\\
			C' &= \sqrt{\frac{C\sigma\omega\theta}{\lambda^3\mu}}.
		\end{align*}
	\end{theorem}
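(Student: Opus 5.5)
This statement is Laurent's sharpened lower bound for linear forms in two logarithms (\cite{laurent}, Theorem~2 together with the refinement in Section~7 of that paper), specialised to the real case via the quantity $D$. We would not reprove it from scratch. Our plan is to check that the hypotheses and constants as stated match Laurent's, and to indicate the structure of his argument, which is the interpolation-determinant method. Concretely, we would first reduce to the setting of \cite{laurent}. We may assume $\alpha_1,\alpha_2$ are multiplicatively independent, $b_1,b_2>0$, and $\log\alpha_i$ are fixed determinations. Replacing the degree of $\QQ(\alpha_1,\alpha_2)$ by $D=[\QQ(\alpha_1,\alpha_2):\QQ]/[\RR(\alpha_1,\alpha_2):\RR]$ is the standard device that gains a factor of two when everything is real. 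Conditions (i)--(iii) are then exactly Laurent's hypotheses on $h$, $a_1$, $a_2$.

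For the argument itself, we would consider the $N\times N$ interpolation determinant
\[
\Delta=\det\Bigl( \binom{t_i+s_j b_2/b_1}{k_i}\, \alpha_1^{l_i r_j}\alpha_2^{m_i s_j}\Bigr),
\]
with parameters $K,L,R,S$ chosen in terms of $a_1,a_2,h,\rho$ and $\mu$. The proof then has three steps.
\begin{enumerate}[label=(\arabic*)]
\item \emph{Analytic upper bound.} Assuming $|\Lambda|$ is very small, each entry is close to an analytic function of one complex variable evaluated at the points $r_j+s_jb_2/b_1$. A Schwarz-lemma argument on a disc of radius $\rho$ gives an upper bound of the shape $\log|\Delta|\le -\tfrac{\sigma}{2}N^2\log\rho+\dots$. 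This is where $\sigma=(1+2\mu-\mu^2)/2$ and $\lambda=\sigma\log\rho$ enter.
\item \emph{Liouville lower bound.} If $\Delta\neq 0$, it is a nonzero algebraic number. Bounding its height via Lemma~\ref{lem-height} gives $\log|\Delta|\ge -D\,(\text{height bound})$, which involves $a_1$, $a_2$ and $h$.
\item \emph{Zero estimate.} Show $\Delta\ne0$ by a multiplicity estimate on the torus $\mathbb{G}_m^2$ (Laurent's zero lemma), using the multiplicative independence of $\alpha_1,\alpha_2$.
\end{enumerate}
Comparing the two bounds and optimising $K,L,R,S$ yields the displayed inequality. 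The constants $C$, $C'$, $\omega$ and $\theta$ arise from solving the resulting quadratic optimisation in closed form.

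The main obstacle is the zero estimate in step (3), together with the delicate optimisation producing the exact constants $C$ and $C'$. Reproducing these constants exactly is a long computation. In the paper we would therefore simply cite \cite{laurent}, Theorem~2 and Corollary~2, verifying only that our normalisation agrees with his. That normalisation is $\Lambda=b_2\log\alpha_2-b_1\log\alpha_1$, with $a_i\ge\rho|\log\alpha_i|-\log|\alpha_i|+2Dh(\alpha_i)$.
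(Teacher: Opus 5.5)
Your proposal matches the paper, which gives no proof of this statement and simply quotes it as Laurent's Theorem~2 from \cite{laurent}; citing that result and checking that conditions (i)--(iii) and the constants $C$, $C'$ agree with his is all that is needed. One correction to your commentary: for real $\alpha_1,\alpha_2$ one has $[\RR(\alpha_1,\alpha_2):\RR]=1$, so $D=[\QQ(\alpha_1,\alpha_2):\QQ]$ (the paper accordingly takes $D=3$); the factor-two saving encoded in $D$ occurs when the $\alpha_i$ are \emph{not} real, so the theorem is not a ``specialisation to the real case''.
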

	
	Let us set
	$$ b' = \frac{b_1}{d \log A_2} + \frac{b_2}{d \log A_1} $$
	with real numbers $A_1, A_2 > 0$ that fulfill $\log A_i \geq h_m(\alpha_i)$.
	
	We define the constants $C_1(m)$ and $C_2(m)$ for $m = 10, 12, \dots, 30$ as suggested in Table~\ref{tabl-const-laurent}.
	
	\begin{table}[!h]
		\begin{center}
			\caption{Laurent's constants}
			\label{tabl-const-laurent}
			\begin{tabular}{ c| ccccccccccc} 
				$m$ & 10 & 12 & 14 & 16 & 18 & 20 & 22 & 24 & 26 & 28 & 30 \\
				\hline
				$C_1$ & 32.3 & 29.9 & 28.2 & 26.9 & 26.0 & 25.2 & 24.5 & 24.0 &
				23.5 & 23.1 & 22.8 \\
				$C_2$ & 25.2 & 23.4 & 22.1 & 21.1 & 20.3 & 19.7 & 19.2 & 18.8 &
				18.4 & 18.1 & 17.9 \\
			\end{tabular}
		\end{center}
	\end{table}
	
	Then we get from \cite[Corollary 2]{laurent}
	
	\begin{cor}
		\label{cor-laurent-2}
		Suppose $\alpha_1$ and $\alpha_2$ are multiplicatively independent.
		If $\alpha_1,$ $\alpha_2,$ $\log \alpha_1,$ $\log \alpha_2$ are real and
		positive, then
		$$ \log |\Lambda| \geq - C_2(m) d^4 \left( 
		\max \left\{\log b' + 0.38, \frac md, 1 \right\} \right)^2
		\log A_1 \log A_2 $$
		where $(m, C_2(m))$ is a pair from Table~\ref{tabl-const-laurent}.
	\end{cor}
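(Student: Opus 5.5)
The plan is to specialize Theorem~\ref{theo-laurent-2} with explicit choices of $a_1,a_2,h,\rho,\mu$ adapted to the real positive case. Then every term in Laurent's lower bound can be bounded by a constant times $d^4\bigl(\max\{\log b'+0.38,\tfrac md,1\}\bigr)^2\log A_1\log A_2$. Throughout I would write $M=\max\{\log b'+0.38,\frac md,1\}$.

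\textbf{Step 1: the parameters $D$ and $a_i$.} Since $\alpha_1,\alpha_2$ are real, $[\RR(\alpha_1,\alpha_2)\colon\RR]=1$, so $D=d$. Since $\log\alpha_i$ is real and positive, $|\log\alpha_i|=\log|\alpha_i|=\log\alpha_i$. Hence the quantity in condition (ii) becomes
$$(\rho-1)\log\alpha_i+2d\,h(\alpha_i)\le(\rho-1)\,d\,h_m(\alpha_i)+2d\,h_m(\alpha_i)\le(\rho+1)\,d\log A_i,$$
using $\log\alpha_i\le d\,h_m(\alpha_i)$ and $h(\alpha_i)\le h_m(\alpha_i)$. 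I would therefore put $a_i=(\rho+1)d\log A_i$. This is $\ge1$ because $\log A_i\ge h_m(\alpha_i)\ge 1/d$ and $\rho>1$. With this choice,
$$\frac{b_1}{a_2}+\frac{b_2}{a_1}=\frac{b'}{\rho+1},$$
where $b'$ is exactly the quantity defined before the corollary.

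\textbf{Step 2: the parameter $h$ and conditions (i), (iii).} I would take $h=d\,(M+c_0)$ for a suitable small constant $c_0$. I would fix $\rho$ and $\mu$ for each value of $m$, following Laurent's own table. The check for (i) is that $\log\frac{b'}{\rho+1}+\log\lambda+1.75+0.06/d\le \log b'+0.38+c_0$. With $\rho$ fixed this reduces to a numerical inequality on $\log\lambda-\log(\rho+1)$. The other two lower bounds in (i) hold because $dM\ge m\ge 10$, which dominates both $\lambda=\sigma\log\rho$ and $\frac{d\log2}{2}$ for the relevant $\rho$. Condition (iii) follows from $a_1a_2\ge(\rho+1)^2\ge\lambda^2$, again using $\log A_i\ge 1/d$.

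\textbf{Step 3: the main term and the main obstacle.} The main term becomes
$$C\Bigl(h+\tfrac{\lambda}{\sigma}\Bigr)^2a_1a_2=C(\rho+1)^2\,d^4\Bigl(M+c_0+\tfrac{\lambda}{\sigma d}\Bigr)^2\log A_1\log A_2 .$$
The remaining terms,
$$\sqrt{\omega\theta}\Bigl(h+\tfrac\lambda\sigma\Bigr)\quad\text{and}\quad\log\Bigl(C'\bigl(h+\tfrac\lambda\sigma\bigr)^2a_1a_2\Bigr),$$
are of lower order. I would absorb them into the leading term using $dM\ge m$ and $d^2\log A_1\log A_2\ge1$, which gives a relative error of size $O(1/m)$. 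Here $C$ is a decreasing function of $H$ and of $a_1a_2$, so it is maximized at the extreme values $H=h/\lambda+1/\sigma$ with $h=m$ and $a_1a_2=(\rho+1)^2$.

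The main obstacle is the final numerical step. For each $m$ in Table~\ref{tabl-const-laurent}, one must choose $\rho,\mu$ and verify that
$$C(\rho+1)^2\Bigl(1+\tfrac{c_0+\lambda/\sigma}{m}\Bigr)^2+(\text{absorbed lower-order terms})\le C_2(m).$$
This is a finite explicit computation, essentially the one carried out in \cite[Corollary 2]{laurent}. I would first try Laurent's suggested values of $\rho$ and $\mu$ for each $m$. If the margin is too tight, I would run a small grid search over $(\rho,\mu)$.
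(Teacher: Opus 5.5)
The paper gives no argument for this statement; it simply quotes Laurent's corollary from \cite{laurent}. Your proposal therefore reconstructs the derivation behind the citation rather than competing with a proof in the paper. Your reduction to Theorem~\ref{theo-laurent-2} is the right one, and it shows what the citation hides: why the real positive case earns the smaller constants $C_2(m)$, and where the $0.38$ comes from. The steps that work as you wrote them are:
\begin{itemize}
\item $D=d$, since $\alpha_1,\alpha_2$ are real.
\item The collapse $\rho|\log\alpha_i|-\log|\alpha_i|=(\rho-1)\log\alpha_i$, which gives $a_i=(\rho+1)d\log A_i$ and $\frac{b_1}{a_2}+\frac{b_2}{a_1}=\frac{b'}{\rho+1}$.
\item Condition (iii), from $d\log A_i\ge 1$.
\item Bounding $C,\omega,\theta,C'$ at their worst case $H=m/\lambda+1/\sigma$, $a_1=a_2=\rho+1$.
\end{itemize}

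One step needs correcting: take $c_0=0$. Your factor $\bigl(1+\frac{c_0+\lambda/\sigma}{m}\bigr)^2$ is valid only for $c_0=0$. The reason is that $M$ is bounded below only by $m/d$ and by $1$, not by $m$. For $c_0>0$ you therefore only have $c_0/M\le c_0$ (take $M=1$, $d\ge m$). The factor you would actually have to control is then $\bigl(1+c_0+\frac{\lambda}{\sigma m}\bigr)^2$, which is too lossy to reach the table values.

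No such slack is needed. The $0.38$ is exactly what lets condition (i) hold with $h=dM$, since $\log\frac{\lambda}{\rho+1}+1.75+0.06\le 0.38$ for suitable $\rho,\mu$. A smaller point: the bound $h\ge d\log 2/2$ in (i) follows from $M\ge 1$, not from $m\ge 10$.

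With $c_0=0$ the numerics do close. For instance, $(\rho,\mu)=(5,0.5)$ gives a total of about $25.14\le 25.2$ at $m=10$, and $(\rho,\mu)=(5.8,0.6)$ gives about $17.86\le 17.9$ at $m=30$. As written, however, your proposal, like the paper, leaves the per-$m$ verification of $C_2(m)$ to Laurent's computation.
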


To improve readability in the subsequent sections, we introduce the $L$-notation.
Assume that $f(x),g(x)$ and $h(x)$ are real functions and $h(x)>0$. We will write
\[f(x)=g(x)+L_c (h(x))\]
for
\[g(x)-h(x)\leq f(x) \leq g(x)+h(x).\]

The $L$-notation is similar to the $O$-notation, with the advantage that the error term is given explicitly. For more details on the $L$-notation we refer to \cite[Section~3.1]{heuberger-togbe-ziegler}.

\section{Twisted Thue Equation}
\label{sec-tw}

	We denote the number field generated by the roots of $f_n$ by
	$K_n = \QQ(\lambda)$. 
	It is well-known (see for example \cite{thomas1979}) that $K_n/\QQ$ is 
    Galois and that either
    pair of the three roots $\lambda, \lambda_1, \lambda_2$
	generates the group of units in $\ZZ[\lambda]$.
	We choose $\{\lambda, \lambda_2\}$.
	
	Further, we write  $\alpha_i = \lambda_i^t$ and
	$\beta_i = x - \lambda_i^t y$.
	Then $\alpha_i = \sigma_i(\alpha)$ and $\beta_i = \sigma_i(\beta_0)$,
    where $\sigma_i$ indicates the $\QQ$-automorphisms in $\Gal(K_n/\QQ)$

	Let $(x,y)$ be a solution of \eqref{eq-lev-wald}.
    We see that independently of $n$ and $t$, $y = 0$ implies $x= \pm 1$ and 
    $x=0$ implies $y=\pm1.$
	Thus, we call $(x,y)= (\pm 1,0), (0,\pm1)$ the 
    trivial solutions of \eqref{eq-lev-wald}.
    In the special case $t=1$, the solutions $(\pm1,\mp1)$ are also trivial 
    as they solve \eqref{eq-lev-wald} for every $n$.
    Throughout this paper, we restrict our attention to non-trivial solutions
    and therefore assume that $|y|\geq 1$.
	
	Let us choose $j\in \{0,1,2\}$ such that
	$$|\beta_j| = \min_{i = 0,1,2} \left\{ |\beta_i| \right\}.$$
	Then we say that the solution $(x,y)$ is of type $j$.
	We set $\beta_k = \sigma_1(\beta_j),\ \beta_l = \sigma_2(\beta_j)$.
	In the following, when computing bounds, we must consider all 
    three cases $j=0,1,2$, resulting in three different bounds.
	For the sake of brevity, we always replace these constants by 
	their maximum.
	Accordingly, when we refer to the ``worst case'', we mean this 
	largest possible constant, which is then used in the subsequent arguments.
	
	For technical reasons, we assume that $n\geq 2$ in the following considerations. 
    For the cases $n=0$ and $n=1$ we refer to Section~\ref{sec-n01}

	\begin{lemma}
		\label{lem-bound-bj}
        We have
		\begin{align*}
			\abs{\beta_j} 
			&\leq \frac{4}{|y|^2 \abs{\alpha_j - \alpha_k} \abs{\alpha_j - \alpha_l}}\\
			&\leq \frac{c_1}{|y|^2}
			 \cdot 
			\begin{cases}
				\abs{\lambda}^{-2t} &\text{if }j=0,\\
				\abs{\lambda \lambda_2}^{-t} &\text{if }j=1,2
			\end{cases}		
		\end{align*}
		with
		$$ c_1 = 4 \cdot 
		\left( 1 - \abs{\frac{\lambda_2}{\lambda}}^t \right)^{-1}
		\left( 1 - \abs{\frac{\lambda_1}{\lambda_2}}^t \right)^{-1}.$$
		
	\end{lemma}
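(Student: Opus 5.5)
We start from the norm equation $\beta_j\beta_k\beta_l = \pm 1$, so that $|\beta_j| = 1/(|\beta_k||\beta_l|)$. It therefore suffices to bound $|\beta_k|$ and $|\beta_l|$ from below.

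For $i\in\{k,l\}$ we write
$$\beta_i - \beta_j = (\alpha_j-\alpha_i)\,y, \qquad\text{so}\qquad |\beta_i| \ge |y|\,|\alpha_j-\alpha_i| - |\beta_j|.$$
Since $j$ was chosen so that $|\beta_j|$ is minimal, we also have $|\beta_j|\le|\beta_i|$. The triangle inequality then gives $|y||\alpha_j-\alpha_i| \le |\beta_i|+|\beta_j| \le 2|\beta_i|$, that is,
$$|\beta_i| \ge \tfrac12 |y|\,|\alpha_j-\alpha_i| \qquad (i=k,l).$$
This uses nothing about $n$ or $t$ beyond $\alpha_j\ne\alpha_i$, which holds because $\lambda^t$ has three distinct real conjugates: they have pairwise distinct absolute values, as shown below. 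Multiplying the two bounds gives
$$|\beta_j| = \frac{1}{|\beta_k||\beta_l|} \le \frac{4}{|y|^2|\alpha_j-\alpha_k||\alpha_j-\alpha_l|},$$
which is the first inequality.

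For the second inequality we need lower bounds for the differences of the $\alpha_i=\lambda_i^t$. From Thomas's ordering and the relations $\lambda_1=-1/(\lambda+1)$ and $\lambda_2=-(\lambda+1)/\lambda$, for $n\ge2$ we have
$$|\lambda|>|\lambda_2|>1>|\lambda_1|.$$
Indeed, $\lambda\approx n$, $\lambda_2\approx -1-1/n$ and $\lambda_1\approx -1/(n+1)$; one checks this from the sign changes of $f_n$. Then for any two of them, $|\alpha_a-\alpha_b| \ge |\alpha_a| - |\alpha_b| = |\alpha_a|(1-|\lambda_b/\lambda_a|^t)$ when $|\lambda_a|>|\lambda_b|$. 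This gives
$$|\alpha_0-\alpha_1| \ge |\lambda|^t\bigl(1-|\lambda_1/\lambda|^t\bigr) \ge |\lambda|^t\bigl(1-|\lambda_2/\lambda|^t\bigr),$$
$$|\alpha_0-\alpha_2| \ge |\lambda|^t\bigl(1-|\lambda_2/\lambda|^t\bigr),$$
$$|\alpha_2-\alpha_1| \ge |\lambda_2|^t\bigl(1-|\lambda_1/\lambda_2|^t\bigr).$$
Here we used $|\lambda_1|<|\lambda_2|$ to replace the factor with the smaller quotient. We also use $1-|\lambda_1/\lambda_2|^t\le 1$ and $1-|\lambda_2/\lambda|^t\le 1$ freely, so that products of these factors can be bounded by the constant $c_1$.

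We then treat the three types separately.
\begin{itemize}
\item For $j=0$, the product $|\alpha_0-\alpha_1||\alpha_0-\alpha_2|$ is at least $|\lambda|^{2t}(1-|\lambda_2/\lambda|^t)^2$. Since $(1-|\lambda_2/\lambda|^t)^{-2}$ need not be dominated by $c_1/4$ as written, we bound $|\alpha_0-\alpha_1|$ more carefully: it is at least $|\lambda|^t-|\lambda_1|^t \ge |\lambda|^t(1-|\lambda_1/\lambda_2|^t)$, because $|\lambda|>|\lambda_2|$. The product is then at least $|\lambda|^{2t}\cdot 4/c_1$.
\item For $j=1$, the product is $|\alpha_1-\alpha_0||\alpha_1-\alpha_2| \ge |\lambda|^t(1-|\lambda_2/\lambda|^t)\,|\lambda_2|^t(1-|\lambda_1/\lambda_2|^t)$, which is exactly $|\lambda\lambda_2|^t\cdot 4/c_1$.
\item For $j=2$, the product is $|\alpha_2-\alpha_0||\alpha_2-\alpha_1|$, which gives the same bound $|\lambda\lambda_2|^t\cdot 4/c_1$.
\end{itemize}
Substituting these into the first inequality yields the stated bound.

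The only delicate point is the verification of the ordering of absolute values $|\lambda|>|\lambda_2|>1>|\lambda_1|$ for all $n\ge2$, together with the bookkeeping in the case $j=0$, where the choice of factors must match the constant $c_1$. The remaining steps are direct applications of the triangle inequality.
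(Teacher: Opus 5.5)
Your proof is correct and takes essentially the same route as the paper. Minimality of $|\beta_j|$ plus the triangle inequality gives $|\beta_i|\ge\frac12|y||\alpha_i-\alpha_j|$, the norm equation then gives the first bound, and the estimates $|\alpha_a-\alpha_b|\ge|\alpha_a|-|\alpha_b|$ give the second. The paper lists three type-dependent constants and notes that the one for $j=2$ is largest, which is the comparison you make inline (including, for $j=0$, rightly keeping the factor $1-|\lambda_1/\lambda|^t$ rather than weakening it to $1-|\lambda_2/\lambda|^t$).
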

	\begin{proof}
		As $j$ was chosen such that $\abs{\beta_j}$ is minimal, we get
		\begin{align}\label{eq-beta-y2}
			\begin{split}
			\abs{\beta_i} &\geq \frac 12 \left(|\beta_j| + |\beta_i|\right)\\
			&\geq \frac 12 \abs{\beta_j - \beta_i}\\
			&= \frac 12 \abs{x - \alpha_i y - x + \alpha_j y}\\
			&= \frac 12 |y| \abs{\alpha_i - \alpha_j}.
			\end{split}
		\end{align}
		
		Further we know that 
		$$\beta_j \beta_k \beta_l = F_{n,t}(x,y) = \pm1,$$
		and together this gives us the stated inequality with
		\begin{align*}
			c_1 
			&= 4 \cdot \begin{cases}
				\left( 1 - \abs{\frac{\lambda_1}{\lambda}}^t \right)^{-1}
				\left( 1 - \abs{\frac{\lambda_2}{\lambda}}^t \right)^{-1}
				& \text{if } j=0,\\
				\left( 1 - \abs{\frac{\lambda_1}{\lambda}}^t \right)^{-1}
				\left( 1 - \abs{\frac{\lambda_1}{\lambda_2}}^t \right)^{-1}
				& \text{if } j=1,\\
				\left( 1 - \abs{\frac{\lambda_2}{\lambda}}^t \right)^{-1}
				\left( 1 - \abs{\frac{\lambda_1}{\lambda_2}}^t \right)^{-1}
				& \text{if } j=2.
			\end{cases}
		\end{align*}
		We observe that $c_1$ is maximized when $j=2$, which constitutes 
		the worst-case scenario.
		Therefore we will use that value in our computations, i.e.
		$$ c_1 = 4 \cdot 
		\left( 1 - \abs{\frac{\lambda_2}{\lambda}}^t \right)^{-1}
		\left( 1 - \abs{\frac{\lambda_1}{\lambda_2}}^t \right)^{-1}.$$
	\end{proof}

	\begin{lemma}
		For $i\neq j$ we have
		$$ \log \abs{\beta_i} = \log |y| + t \begin{cases}
			\log \lambda &\text{if }\{i,j\} \neq \{1,2\}\\
			\log |\lambda_2| &\text{if } \{i,j\}=\{1,2\}
		\end{cases} + L(c_2) $$
		with $c_2 = \log 2 + \log \left(1-\abs{\frac{\lambda_1}{\lambda}}^t + \frac{c_1}{|y|^2 |\lambda\lambda_2|^t} \right)$.

	\end{lemma}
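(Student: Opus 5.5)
The idea is to express $\beta_i$ through $\beta_j$ and then use Lemma~\ref{lem-bound-bj}. Since $\beta_i-\beta_j=y(\alpha_j-\alpha_i)$, we have the identity
$$\beta_i = y(\alpha_j-\alpha_i)+\beta_j .$$
This gives two-sided estimates
$$\tfrac12|y|\,|\alpha_i-\alpha_j| \le |\beta_i| \le |y|\,|\alpha_i-\alpha_j| + |\beta_j| .$$
The lower bound is exactly \eqref{eq-beta-y2} from the proof of Lemma~\ref{lem-bound-bj}. The upper bound is the triangle inequality, where we insert the worst-case bound $|\beta_j|\le c_1|y|^{-2}|\lambda\lambda_2|^{-t}$. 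After taking logarithms, the task reduces to showing that $\log|\alpha_i-\alpha_j|$ equals $t\log\lambda$ or $t\log|\lambda_2|$ up to a controlled error.

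Next I would record the sizes of the roots for $n\ge2$: $\lambda>1>|\lambda_2|>|\lambda_1|$, with $\lambda\approx n$, $\lambda_2\approx-1$ and $\lambda_1\approx -1/n$. I would check these directly from $\lambda_1=-1/(\lambda+1)$ and $\lambda_2=-(\lambda+1)/\lambda$. Then I split into the two cases of the statement.

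\emph{Case $\{i,j\}\neq\{1,2\}$.} One of the two indices is $0$; call the other one $m$. Then
$$|\alpha_i-\alpha_j|=\lambda^t\,\abs{1-(\lambda_m/\lambda)^t}.$$
Hence
$$\lambda^t\bigl(1-|\lambda_m/\lambda|^t\bigr)\le|\alpha_i-\alpha_j|\le\lambda^t\bigl(1+|\lambda_m/\lambda|^t\bigr).$$
Dividing by $|y|\lambda^t$, the upper bound gives
$$\log|\beta_i|-\log|y|-t\log\lambda\le\log\Bigl(1+\abs{\tfrac{\lambda_m}{\lambda}}^t+\tfrac{c_1}{|y|^3\lambda^t|\lambda\lambda_2|^t}\Bigr).$$
The lower bound gives a deviation of at least $-\log2+\log\bigl(1-|\lambda_m/\lambda|^t\bigr)$. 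Since $|y|\ge1$ and $\lambda>1$, the last term in the upper estimate is at most $c_1/(|y|^2|\lambda\lambda_2|^t)$.

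\emph{Case $\{i,j\}=\{1,2\}$.} This is the same argument with $\lambda$ replaced by $\lambda_2$, using
$$|\alpha_1-\alpha_2|=|\lambda_2|^t\,\abs{1-(\lambda_1/\lambda_2)^t}.$$
Here one has to make sure that the $\beta_j$ error term, after dividing by $|y||\lambda_2|^t$, is still covered. Dividing $c_1|y|^{-2}|\lambda\lambda_2|^{-t}$ by $|y||\lambda_2|^t$ gives $c_1|y|^{-3}\lambda^{-t}|\lambda_2|^{-2t}$, and since $|\lambda_2|>1/\lambda$ for $n\ge2$ this is again bounded by $c_1/(|y|^2|\lambda\lambda_2|^t)$.

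The main obstacle is the bookkeeping needed to put all these sub-cases under the single constant $c_2$ of the statement. Each sub-case has its own ratio $|\lambda_m/\lambda|^t$ or $|\lambda_1/\lambda_2|^t$ and its own sign, and the lower and upper deviations must both be bounded in absolute value by the same quantity. I would first try the following: write each deviation as $\pm\log2$ plus the logarithm of a factor close to $1$, using $\log(1+u)\le u$ and $-\log(1-u)$ as in Lemma~\ref{lem-log-bound}, and then use the root ordering to dominate every case by the stated expression. If the lower bound in the case with ratio $|\lambda_2/\lambda|^t$ turns out not to be absorbed by this expression as written, I would enlarge $c_2$ to the maximum over the cases, following the paper's ``worst case'' convention. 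Since $c_2$ tends to $\log2$ as $n,t$ grow, this does not affect the later asymptotic use.
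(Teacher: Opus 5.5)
Your two-sided estimate is the paper's own argument. Your upper factor $1+|\lambda_m/\lambda|^t$ is actually more accurate than the paper's $1-|\lambda_m/\lambda|^t$, which is wrong for odd $t$. The problem is the step you postpone to the end, and it is not bookkeeping. With the factor $\frac12$ in the lower bound, the stated $c_2$ is out of reach in every case, not only for the ratio $|\lambda_2/\lambda|^t$. Your lower deviation is $-\log 2+\log(1-r^t)$ with $r\in\{|\lambda_1/\lambda|,|\lambda_2/\lambda|,|\lambda_1/\lambda_2|\}$, so its absolute value exceeds $\log 2$. But $c_2<\log 2$ as soon as $c_1|y|^{-2}|\lambda\lambda_2|^{-t}<|\lambda_1/\lambda|^t$, i.e.\ $|y|^2>c_1\lambda^t$. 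Since $\lambda^t>c_1$ for $n,t\ge 2$, this covers all of Case~2, where $|y|>\lambda^t$. So enlarging $c_2$ proves a weaker lemma than the one stated. That weaker version would suffice later, where only $c_2<0.7$ for $n\ge 10^5$ is used. The paper's proof has the same defect: its $\kappa_0$ is negative with $|\kappa_0|>\log 2$, so $c_2\ge\max\{\kappa_0,\kappa_1\}$ does not give the two-sided bound.

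The missing idea is to drop the $\frac12$ and use $|\beta_i|\ge|y|\,|\alpha_i-\alpha_j|-|\beta_j|$. After dividing by $|y|$ times the main term, the lower deviation becomes $\log(1-r^t-\eta)$. Here $\eta$ is the relative size of $\beta_j$ \emph{before} absorption: $\eta\le c_1|y|^{-3}\lambda^{-t}|\lambda\lambda_2|^{-t}$ if $0\in\{i,j\}$, and $\eta\le c_1|y|^{-3}|\lambda\lambda_2^2|^{-t}$ if $\{i,j\}=\{1,2\}$. It then suffices that $(1-r^t-\eta)(1-|\lambda_1/\lambda|^t)\ge\frac12$. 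The quantities $r^t$, $\eta$ and $|\lambda_1/\lambda|^t$ all decrease in $n$, $t$ and $|y|$. At the worst point $n=t=2$, $|y|=1$ (where $\lambda\approx 2.651$ and $c_1\approx 5.70$), the product is about $0.66$ for $\{0,2\}$ and $0.73$ for $\{1,2\}$. Absorbing $\eta$ into $c_1|y|^{-2}|\lambda\lambda_2|^{-t}\approx 0.43$ would be too crude here. Your upper side is fine as it stands, since $\eta\le c_1|y|^{-2}|\lambda\lambda_2|^{-t}$ and $r^t+2|\lambda_1/\lambda|^t<1$. Finally, correct a slip: $|\lambda_2|=1+1/\lambda>1$, not $<1$. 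Your $\{1,2\}$ absorption needs $|y|\,|\lambda_2|^t\ge 1$, which follows from $|\lambda_2|>1$, not from $|\lambda_2|>1/\lambda$.
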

	\begin{proof}

		For $i\neq j$, we know from \eqref{eq-beta-y2}
		\begin{align*}
			\abs{\beta_i} &\geq \frac{|y|}{2} \abs{\alpha_i - \alpha_j}\\
            &= \frac{|y|}{2} \cdot \begin{cases}
				\lambda^t \left(1 - \abs{\frac{\lambda_{i}}{\lambda}}^t\right)
				&\text{if } j=0\\
                \lambda^t \left(1 - \abs{\frac{\lambda_{j}}{\lambda}}^t\right)
				&\text{if } i=0\\
				\lambda_2^t \left(1 - \abs{\frac{\lambda_1}{\lambda_2}}^t\right)
				&\text{if } \{i,j\}=\{1,2\}
			\end{cases}
		\end{align*}
        thus
		\begin{align*}
			\log |\beta_i| \geq \log |y| + t \cdot \begin{cases}
				\log \lambda &\text{if } \{i,j\} \neq \{1,2\} \\
				\log |\lambda_2| &\text{if } \{i,j\} = \{1,2\} 
			\end{cases} + \kappa_0
		\end{align*}
		with 
		\begin{align*}
			\kappa_0 = - \log 2 + \begin{cases}
				\log \left(1 - \abs{\frac{\lambda_{i}}{\lambda}}^t\right) &\text{if } j=0 \\
                \log \left(1 - \abs{\frac{\lambda_{j}}{\lambda}}^t\right) &\text{if } i=0 \\
				\log \left(1 - \abs{\frac{\lambda_{1}}{\lambda_2}}^t\right) &\text{if } \{i,j\} = \{1,2\} 
			\end{cases}
		\end{align*}
		and 
		\begin{align*}
			\abs{\beta_i} &\leq |y| \abs{\alpha_i - \alpha_j} + \abs{\beta_j}\\
			&< |y| \cdot \begin{cases}
				\lambda^t \cdot \left(1-\abs{\frac{\lambda_i}{\lambda_0}}^t + \frac{c_1}{|y|^2 \lambda^{2t}} \right)
				&\text{if } j=0\\
				\lambda^t \cdot \left(1-\abs{\frac{\lambda_j}{\lambda_0}}^t + \frac{c_1}{|y|^2 |\lambda\lambda_2|^t} \right)
				&\text{if } i=0\\
				|\lambda_2|^t \cdot \left(1-\abs{\frac{\lambda_1}{\lambda_2}}^t + \frac{c_1}{|y|^2 |\lambda\lambda_2|^t} \right)
				&\text{if } \{i,j\}=\{1,2\}.
			\end{cases}
		\end{align*}
		This leads us to
		\begin{align*}
			\log \abs{\beta_i} \leq \log |y| + t \begin{cases}
				\log \lambda &\text{if } \{i,j\} \neq \{1,2\} \\
				\log |\lambda_2| &\text{if } \{i,j\} = \{1,2\} 
			\end{cases}
			+ \kappa_1.
		\end{align*}

        In total we get
        \begin{align*}
            \log \abs{\beta_i} = \log |y| + t \begin{cases}
    			\log \lambda &\text{if }\{i,j\} \neq \{1,2\}\\
    			\log |\lambda_2| &\text{if } \{i,j\}=\{1,2\}
		      \end{cases} + L(c_2)
        \end{align*}
        where $c_2 \geq \max\{\kappa_0, \kappa_1\},$ i.e. we look at the worst case ($j=1,\ i=0$) and set
        $$ c_2 = \log 2 + \log \left(1-\abs{\frac{\lambda_1}{\lambda}}^t 
            + \frac{c_1}{|y|^2 |\lambda\lambda_2|^t} \right).$$
	\end{proof}

	Now we look at Siegel's identity
	$$ \beta_j (\alpha_k - \alpha_l) + \beta_k (\alpha_l - \alpha_j) 
	+ \beta_l (\alpha_j - \alpha_k) = 0 $$
	and rewrite it as
	$$ \frac{\beta_j}{\beta_l}  \frac{\alpha_k - \alpha_l}{\alpha_k - \alpha_j} +
	\frac{\beta_k}{\beta_l}  \frac{\alpha_l - \alpha_j}{\alpha_k - \alpha_j}
	= 1. $$
	
	For the first summand we obtain
	\begin{align}
		\label{eq-bound-siegel}
		\begin{split}
		\abs{\frac{\beta_j}{\beta_l}  \frac{\alpha_k - \alpha_l}{\alpha_k - \alpha_j}} 
		&\leq \abs{\frac{\alpha_k - \alpha_l}{\alpha_k - \alpha_j}} 
			\cdot \frac{2}{\abs{\alpha_l - \alpha_j} |y|} 
			\cdot \abs{\beta_j} \\
		&= \frac{{c_3}}{|y|^3} 	\begin{cases}
			\abs{\lambda}^{-3t} &\text{if }j=0,\\
			\abs{\lambda \lambda_2^2}^{-t} &\text{if }j=1,2
		\end{cases}		
		\end{split}
	\end{align}
	with
	\begin{align*}
		c_3 
		&= \frac{c_1^2}{2} \cdot \begin{cases}
			\abs{\lambda_2/\lambda}^t \cdot\left(1 - \abs{\frac{\lambda_1}{\lambda_2}}^t \right) &\text{if } j=0,\\
			\left(1 - \abs{\frac{\lambda_2}{\lambda}}^t\right) &\text{if } j=1,\\
			\left(1 - \abs{\frac{\lambda_1}{\lambda}}^t\right) &\text{if } j=2
		\end{cases}\\
		&< \frac{c_1^2}{2}.
	\end{align*}

	We define $e^\Lambda = \frac{\beta_k}{\beta_l}  \frac{\alpha_l - \alpha_j}{\alpha_k - \alpha_j}$, thus
	$$\Lambda = \log \beta_k - \log \beta_l 
		+ \log \abs{ \frac{\alpha_l - \alpha_j}{\alpha_k - \alpha_j}}. $$
	Since $|y|\geq 1$ and $t\geq2$, Inequality~\eqref{eq-bound-siegel} yields 
        the absolute constant 
		$$\abs{1-e^\Lambda} < c_3 \abs{\lambda\lambda_2^2}^{-2}= c_4.$$
	
	\begin{lemma}\label{lem-bound-Lambda}
	We have
		$$ \abs{\Lambda} < \frac{c_5}{|y|^3} \abs{\lambda\lambda_2^2}^{-t},$$
    where $c_5 = \frac{-\log(1-c_4)}{c_4} c_3$.
	\end{lemma}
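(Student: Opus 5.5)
The plan is to pass from the bound on $\abs{1-e^\Lambda}$, which Siegel's identity and \eqref{eq-bound-siegel} give directly, to a bound on $\abs{\Lambda}$ itself, using Lemma~\ref{lem-log-bound} with $a=c_4$.

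By Siegel's identity,
$$1-e^\Lambda=\frac{\beta_j}{\beta_l}\,\frac{\alpha_k-\alpha_l}{\alpha_k-\alpha_j}.$$
So \eqref{eq-bound-siegel} gives
$$\abs{1-e^\Lambda}\le \frac{c_3}{|y|^3}\abs{\lambda\lambda_2^2}^{-t}$$
in the worst case $j=1,2$. For $j=0$ the factor is $\abs{\lambda}^{-3t}$, which is smaller because $|\lambda_2|<\lambda$ forces $\abs{\lambda\lambda_2^2}\le\lambda^3$. Since $|y|\ge1$, $t\ge2$ and $\abs{\lambda\lambda_2^2}>1$, this quantity is at most $c_4=c_3\abs{\lambda\lambda_2^2}^{-2}$.

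To apply Lemma~\ref{lem-log-bound} with $z=e^\Lambda$ we need $c_4\le 1$, and in fact $c_4<1$ so that $-\log(1-c_4)$ is finite. For $n\ge2$, $\lambda\approx n+1$ and $|\lambda_2|\approx 1+1/n$, so $c_1$ is close to $4$, $c_3<c_1^2/2$ is around $8$, and $\abs{\lambda\lambda_2^2}^{2}$ is roughly $\lambda^2$. This makes $c_4$ small for $n\ge2$ once the roots are estimated explicitly. Checking this numerically for the smallest $n$ (with $t=2$ as the worst case) is the one genuinely delicate point, and I would do it directly with Thomas' root estimates.

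One subtlety is that $\Lambda$ was defined via real logarithms of the $\beta$'s and of the absolute values. It is real exactly when the quantity $e^\Lambda$ is positive. The bound $\abs{1-e^\Lambda}<1$ forces $e^\Lambda>0$, so $\Lambda$ is the principal logarithm of $z=e^\Lambda$, with signs of $\beta_k,\beta_l$ absorbed appropriately.

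Lemma~\ref{lem-log-bound} then gives
$$\abs{\Lambda}\le \abs{1-e^\Lambda}\frac{-\log(1-c_4)}{c_4}\le \frac{-\log(1-c_4)}{c_4}\,\frac{c_3}{|y|^3}\abs{\lambda\lambda_2^2}^{-t},$$
which is the claim with $c_5=\frac{-\log(1-c_4)}{c_4}c_3$. Strict inequality follows from the strictness in \eqref{eq-bound-siegel}, or from $c_3<c_1^2/2$ together with the strict bound $\abs{1-e^\Lambda}<c_4$.
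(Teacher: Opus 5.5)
Your proposal is correct and is essentially the paper's proof: you bound $\abs{1-e^\Lambda}$ via Siegel's identity and \eqref{eq-bound-siegel}, then apply Lemma~\ref{lem-log-bound} with $a=c_4$. Your remarks on $e^\Lambda>0$ and on verifying $c_4<1$ make explicit what the paper leaves implicit; for $n=t=2$ one gets $c_4\approx 0.64$, which is not small but is below $1$, and that is all the argument needs.
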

	
	\begin{proof}
		Since $\abs{1-e^{\Lambda}} < c_4 < 1$, we can use Lemma~\ref{lem-log-bound} to 
		get an upper bound for $\Lambda$,
		\begin{align*}
			|\Lambda| &=\abs{\log \left(1 + 1-e^{\Lambda}\right)} \\
			&\leq \frac{-\log(1-c_4)}{c_4} \abs{1-e^{\Lambda}}\\
			&\leq \frac{-\log(1-c_4)}{c_4} c_3 \abs{y}^{-3}
			\cdot \begin{cases}
				\lambda^{-3t} &\text{if } j=0,\\
				\abs{\lambda\lambda_2^2}^{-t} &\text{if } j=1,2,
			\end{cases}\\
			&\leq c_5 \abs{y}^{-3}
			\cdot \begin{cases}
				\lambda^{-3t} &\text{if } j=0,\\
				\abs{\lambda\lambda_2^2}^{-t} &\text{if } j=1,2.
			\end{cases}
		\end{align*}
	\end{proof}

	Since $N_{K/\QQ}(\beta_i) = \pm 1$, we know that $\beta_1$, $\beta_2$, $\beta_3$
    are units in the corresponding ring of integers. 
	Thus, according to Dirichlet's unit theorem, we can write
	$$\beta_j = \lambda^{a}\lambda_2^{b}.$$
	With algebraic conjugation, we get
	\begin{align*}
		\beta_k &= \lambda_1^a \lambda^b = \lambda^{b-a} \lambda_2^{-a}, \\ 
		\beta_l &= \lambda_2^a \lambda_1^b = \lambda^{-b} \lambda_2^{a-b},
	\end{align*}
	which leads us to
	\begin{align*}
		\Lambda &= \log |\beta_k| - \log |\beta_l|
			+ \log \abs{\frac{\alpha_l - \alpha_j}{\alpha_k - \alpha_j}}\\
		&= (2b-a) \log |\lambda| + (b-2a) \log |\lambda_2|
		 + \log \abs{\frac{\alpha_l - \alpha_j}{\alpha_k - \alpha_j}}.
	\end{align*} 
	
	We set
	$$\Lambda' = A \log |\lambda| + B \log |\lambda_2|$$
	with 
	\begin{align*}
		A = \begin{cases}
			2b-a, \\
			2b-a+t, \\
			2b-a-t
		\end{cases}
		\ \text{and}\quad 
		B = \begin{cases}
			b-2a &\quad\text{if } j=0,\\
			b-2a-t &\quad\text{if } j=1,\\
			b-2a+t &\quad\text{if } j=2.
		\end{cases}
	\end{align*}
	
	\begin{lemma} \label{lem-B-logy-log-lam-t}
	The following inequalities hold:
		\begin{align}
			\label{eq-bound_B}
			c_8' \left(\frac{\log|y|}{\log \lambda} + t\right)
			< |B| < c_8 \left(\frac{\log|y|}{\log \lambda} + t\right).
		\end{align}
	\end{lemma}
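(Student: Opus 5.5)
We express $\log|\beta_k|$ and $\log|\beta_l|$ in two ways and solve the resulting $2\times 2$ linear system for $B$. On one side, the unit representation gives
$$\log|\beta_k| = (b-a)\log\lambda - a\log|\lambda_2|, \qquad \log|\beta_l| = -b\log\lambda + (a-b)\log|\lambda_2|.$$
On the other side, the preceding lemma gives, for $i\in\{k,l\}$,
$$\log|\beta_i| = \log|y| + t\,\ell_i + L(c_2),$$
where $\ell_i\in\{\log\lambda,\log|\lambda_2|\}$ is chosen according to the type $j$.

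\textbf{Step 1 (solve for $a,b$).} Equating the two expressions gives a linear system in $a,b$. Its coefficient matrix has rows $(-\log\lambda-\log|\lambda_2|,\ \log\lambda)$ and $(\log|\lambda_2|,\ -\log\lambda-\log|\lambda_2|)$. The determinant is $(\log\lambda)^2+\log\lambda\log|\lambda_2|+(\log|\lambda_2|)^2$, which is of order $(\log\lambda)^2$. Here we use $\lambda\approx n$, $|\lambda_2|=(\lambda+1)/\lambda\approx 1+1/n$ and $|\lambda_1|\approx 1/n$. By Cramer's rule, $a$ and $b$ are rational-linear combinations of $\log|y|$, $t\log\lambda$ and $t\log|\lambda_2|$, each divided by this determinant, up to an error of order $c_2/\log\lambda$. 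Since $\log|\lambda_2|/\log\lambda\approx 1/(n\log n)$ is tiny, every occurrence of $\log|\lambda_2|$ can be absorbed into explicit error terms.

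\textbf{Step 2 (the leading term of $B$).} Form $B=b-2a+\epsilon t$ with $\epsilon\in\{0,-1,1\}$ according to $j$. Setting $\log|\lambda_2|=0$ in the main term is expected to give, in each type,
$$B \approx -\frac{3\log|y|}{\log\lambda}-3t \quad\text{or}\quad B\approx +\left(\frac{3\log|y|}{\log\lambda}+3t\right).$$
This matches the relation $|B|\sim 3\bigl(\log|y|/\log\lambda+t\bigr)$ announced in the introduction. The $\pm t$ shift in the definition of $B$ is exactly what is needed so that the $t$-coefficient becomes $3$ rather than $2$ or $1$. I would verify this separately for $j=0,1,2$; each check is a short symbolic computation.

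\textbf{Step 3 (bounding the errors).} The errors are:
\begin{itemize}
\item terms of the form $(\log|\lambda_2|/\log\lambda)\cdot(\log|y|/\log\lambda + t)$;
\item the term $O(c_2/\log\lambda)$.
\end{itemize}
Write $Q=\log|y|/\log\lambda + t$, and note $Q\ge t\ge 2$. The first kind is at most $\delta Q$ with $\delta$ small for $n\ge 2$. The second is bounded by an absolute constant, hence by $(\text{const}/2)\,Q$. This yields $c_8' Q<|B|<c_8Q$ with, e.g., $c_8 = 3+\delta+c_2/\log\lambda$ and $c_8' = 3-\delta-c_2/\log\lambda$, taking worst cases over $j$.

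\textbf{Main obstacle.} The main difficulty is ensuring $c_8'>0$ uniformly for small $n$, say $n=2$, where $\lambda\approx 2.25$ so that $\log\lambda$ is small and $c_2/\log\lambda$ is not negligible. Here I would use $t\ge 2$ and $|y|\ge 1$ to get $Q\ge 2$, so the additive error $c_2'$ only costs $c_2'/2$ in the multiplicative constant. I would also evaluate $c_1$, $c_2$ and $\delta$ numerically at the worst case $n=2$, $t=2$, $|y|=1$, checking monotonicity in $n$ and $t$, to confirm $c_8'>0$.
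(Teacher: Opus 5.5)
\textbf{There is a gap at type $j=1$.} Your route is the paper's: the same matrix $R$, Cramer's rule, combining into $B=b-2a\pm t$, and absorbing the $L(c_2)$ errors via $t\ge 2$. But your Step~2 claim that the $t$-coefficient of $B$ is $3$ in every type is false for $j=1$, so the check you defer would fail there.

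For $j=1$ we have $\beta_k=\sigma_1(\beta_1)=\beta_2$ and $\beta_l=\sigma_2(\beta_1)=\beta_0$. Hence
\[
\log|\beta_k|=\log|y|+t\log|\lambda_2|+L(c_2),\qquad \log|\beta_l|=\log|y|+t\log\lambda+L(c_2),
\]
and $B=b-2a-t$. Write $u=\log\lambda$, $v=\log|\lambda_2|$ and $D=u^2+uv+v^2$. Cramer's rule gives $b-2a=\bigl((2u+v)\log|\beta_k|+(u-v)\log|\beta_l|\bigr)/D$, and the $t$-terms cancel exactly:
\[
B=\frac{3u\log|y|+tD}{D}-t+L\Bigl(\frac{3u\,c_2}{D}\Bigr)=\frac{3\log\lambda}{D}\,\log|y|+L\Bigl(\frac{3c_2\log\lambda}{D}\Bigr).
\]
This is consistent with the trivial solution $(0,\pm1)$, which is of type~$1$ with $a=b=-t$, i.e.\ $A=B=0$. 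So for $j=1$ you get $|B|\approx 3\log|y|/\log\lambda$. That is smaller than $c_8'(\log|y|/\log\lambda+t)$ with $c_8'$ near $3$ as soon as $t$ is comparable to $\log|y|/\log\lambda$, and no error analysis in Step~3 can recover the stated lower bound from it. For $j=0$ and $j=2$ your leading terms are correct: $B=\frac{3u}{D}(\log|y|+tu)$ and $B=\frac{3u}{D}(\log|y|+t(u+v))$ respectively, up to the same error.

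\textbf{The missing ingredient, and how it compares with the paper.} You need a size constraint that holds only for type~$1$. For a nontrivial solution $x\neq 0$, and $|\beta_1|<\frac12$ by Lemma~\ref{lem-bound-bj} (as noted in the proof of Lemma~\ref{lem-sol-conv}). Hence $|y|\,|\lambda_1|^t\ge |x|-|\beta_1|>\frac12$, i.e.\ $t\log(\lambda+1)<\log|y|+\log 2$. This gives $\log|y|/\log\lambda+t<(2\log|y|+\log 2)/\log\lambda$, and therefore $|B|\gtrsim\frac32(\log|y|/\log\lambda+t)$. That is a lower bound of the right shape, but with a constant near $3/2$ rather than near $3$. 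You must either roughly halve $c_8'$ for $j=1$, or observe that the later applications only use $\log|y|<|B|\log\lambda/c_8'$ and, in Case~2, $t<\log|y|/\log\lambda<|B|/c_8'$. Both of these follow for $j=1$ directly from $|B|\approx 3\log|y|/\log\lambda$.

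The paper's proof has the same hole. It works out only $j=0$ and declares $j=1,2$ analogous, which is true for $j=2$ but not for $j=1$.

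On the other hand, forming $b-2a$ directly improves on the paper's route of bounding $|a|$ and $|b|$ separately and subtracting. Since $2u+v>0$ and $u-v>0$, the errors add up to at most $3uc_2/D$. The paper's combination loses more, and its displayed $c_8,c_8'$ have the roles of $c_7$ and $2c_6$ swapped; its numerics correspond to $|B|=2|a|-|b|$. The sharper error is exactly what your small-$n$ positivity concern needs. In that range, keep the exact factor $3u^2/D$, which is about $2.09$ at $n=2$, rather than treating $v/u\approx\frac13$ as negligible.
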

	
	\begin{proof}
		We have the linear system of equations
		\begin{align*}
			\begin{pmatrix}
				\log |\beta_k| \\
				\log |\beta_l|
			\end{pmatrix}
			=
			\begin{pmatrix}
				\log |\lambda_1| & \log |\lambda| \\
				\log |\lambda_2| & \log |\lambda_1|
			\end{pmatrix}
			\begin{pmatrix}
				a \\ b
			\end{pmatrix}
			\eqqcolon R
			\begin{pmatrix}
				a \\ b
			\end{pmatrix}.
		\end{align*}
		Solving this linear system of equations,
		we can compute bounds for $a$ and $b$.
		These bounds depend on $j$. We give the details for the case $j=0$. The other cases can be dealt with similarly. We note that in the other cases we get tighter bounds for $|B|$.
		
		We have
		$$\det R = \log (\lambda+1) ^2 - \log (\lambda+1) \log (\lambda) + (\log \lambda)^2, $$
		ergo
		$$\left(\log (\lambda+1)\right)^2 > \det R > (\log \lambda)^2.$$
		
		By inverting the matrix $R$, we get
		\begin{align*}
			a &= \frac{1}{\det R} \left( \log |\lambda_1| \log |\beta_k| - \log |\lambda| \log |\beta_l|\right),\\
			b &= \frac{1}{\det R} \left(- \log |\lambda_2| \log |\beta_k| + \log |\lambda_1| \log |\beta_l|\right).
		\end{align*}
		
		For $j=0$ we get 
		\begin{align}
			\label{eq-comp-a-b}
			\begin{split}
			|a| &= \frac{1}{\det R} \left(\log(\lambda+1) + \log\lambda\right)
			\cdot \left(\log |y| + t \log \lambda + L(c_2) \right) \\
			&= \frac{\log\lambda \left( \log(\lambda+1) + \log \lambda\right) }{\det R}
			\left(1 + \frac{L(c_2)}{2 \log \lambda}\right)
			\left(\frac{\log |y|}{\log \lambda} + t\right) \\
			&=c_6 \left(1 + \frac{L(c_2)}{2 \log \lambda}\right) \left(\frac{\log |y|}{\log \lambda} + t\right) ,\\
			|b| &= \frac{1}{\det R} \left( 2 \log(\lambda+1) - \log\lambda\right) 
			\cdot \left( \log |y| + t \log \lambda + L(c_2) \right) \\
			&=c_7 \left(1 + L\left(\frac{c_2}{2 \log \lambda}\right)\right) \left( \frac{\log|y|}{\log \lambda} +t \right) ,
			\end{split}
		\end{align}
		and, since $B=b-2a$, we get 
		\begin{align*}
			c_8' \left(\frac{\log|y|}{\log \lambda} + t\right)
			< |B| < c_8 \left(\frac{\log|y|}{\log \lambda} + t\right),
		\end{align*}
        where 
        $$c_8 = c_7\cdot \left( 1+ \frac{c_2}{2 \log \lambda}\right) - 2c_6 \cdot \left( 1- \frac{c_2}{2 \log \lambda}\right) $$ 
        and
        $$c_8' = c_7\cdot \left( 1- \frac{c_2}{2 \log \lambda}\right) - 2c_6 \cdot \left( 1+ \frac{c_2}{2 \log \lambda}\right) .$$
                

        The cases $j=1$ and $j=2$ yield the same result by analogous computations.
		\end{proof}
		
	\begin{lemma}
    \label{lem-bound-L'}
		We have
		$$\abs{\Lambda'} < c_9 \abs{\lambda\lambda_2^2}^{-t},$$
        where $ c_9 = c_5 +1 + \abs{\log \left(1 - \left(\frac{\lambda_2}{\lambda}\right)^t\right)} 
            \abs{\lambda\lambda_2^2}^t.$
	\end{lemma}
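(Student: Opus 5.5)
The plan is to compare $\Lambda'$ with $\Lambda$ directly and bound the difference with the triangle inequality. The shape of $c_9$ suggests exactly this: $c_9\abs{\lambda\lambda_2^2}^{-t}$ equals
$$c_5\abs{\lambda\lambda_2^2}^{-t}+\abs{\lambda\lambda_2^2}^{-t}+\abs{\log\left(1-\left(\tfrac{\lambda_2}{\lambda}\right)^t\right)}.$$
So I aim to show
$$\Lambda'=\Lambda-\log\abs{\frac{\alpha_l-\alpha_j}{\alpha_k-\alpha_j}}\pm t\log\lambda\mp t\log\abs{\lambda_2},$$
and that this extra constant term has modulus at most $\abs{\log(1-(\lambda_2/\lambda)^t)}+\abs{\lambda\lambda_2^2}^{-t}$. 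Lemma~\ref{lem-bound-Lambda} together with $|y|\ge 1$ then bounds the $\Lambda$ part by $c_5\abs{\lambda\lambda_2^2}^{-t}$; for $j=0$ the exponent $\lambda^{-3t}$ is even smaller.

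First I compute $\Lambda'-\Lambda$ case by case in $j$, using the definitions of $A$ and $B$. For $j=0$ we have $A=2b-a$ and $B=b-2a$, so
$$\Lambda'-\Lambda=-\log\abs{\frac{\alpha_2-\alpha_0}{\alpha_1-\alpha_0}}.$$
Here $\alpha_0=\lambda^t$ dominates, so this quotient is
$$\frac{1-(\lambda_2/\lambda)^t}{1-(\lambda_1/\lambda)^t}.$$
For $j=1$ and $j=2$ the indices $k,l$ are $\sigma_1(j),\sigma_2(j)$, and one of $\alpha_k,\alpha_l$ equals $\lambda^t$. Factoring $\lambda^t$ out of one difference and $\lambda_2^t$ out of the other produces the term $\pm t(\log\lambda-\log|\lambda_2|)$. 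This term is exactly compensated by the $\pm t$ shifts in $A$ and $B$.

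In each case the remaining difference is a logarithm of a quotient of factors of the form $1-(\lambda_u/\lambda_v)^t$. These factors use the ratios $\lambda_1/\lambda$, $\lambda_2/\lambda$ and $\lambda_1/\lambda_2$, each of modulus less than $1$. Thomas' relations $\lambda_1=-1/(\lambda+1)$ and $\lambda_2=-(\lambda+1)/\lambda$ give
$$\abs{\frac{\lambda_1}{\lambda}}=\frac{1}{\lambda(\lambda+1)}=\abs{\lambda\lambda_2^2}^{-1}\cdot\frac{\lambda+1}{\lambda},\qquad \abs{\lambda_1/\lambda_2}=\abs{\lambda\lambda_2^2}^{-1}.$$
In every case I expect the dominant factor to be $1-(\lambda_2/\lambda)^t$, contributing $\abs{\log(1-(\lambda_2/\lambda)^t)}$. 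The other factor involves $(\lambda_1/\lambda)^t$ or $(\lambda_1/\lambda_2)^t$. Since $n\ge2$ and $t\ge 2$, this quantity is tiny, so Corollary~\ref{cor-log-12} bounds its logarithm by $2\abs{\lambda_1/\lambda_2}^t$ or $2\abs{\lambda_1/\lambda}^t$.

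The main obstacle is this last bookkeeping step: verifying in all three cases that the minor logarithm really is at most $\abs{\lambda\lambda_2^2}^{-t}$, with constant $1$ rather than $2$. I would first try to use the fact that for $n\ge2$ the true ratio is strictly smaller than $\abs{\lambda\lambda_2^2}^{-1}$ by a definite margin. Then $2x^t$ with $x<\abs{\lambda\lambda_2^2}^{-1}/\sqrt2$ absorbs the factor $2$ for $t\ge2$; the sharper Lemma~\ref{lem-log-bound} with a small $a$ can be used if needed. If some case is off by a bounded factor, the fallback is to enlarge the absolute constant $1$ in $c_9$ accordingly. That change does not affect the subsequent arguments.
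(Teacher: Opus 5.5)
Your decomposition is the paper's. You write $\Lambda'=\Lambda-r_j$, where $r_j$ is what remains of $\log\abs{(\alpha_l-\alpha_j)/(\alpha_k-\alpha_j)}$ after the $\pm t(\log\lambda-\log\abs{\lambda_2})$ part cancels against the shifts in $A$ and $B$. Then the triangle inequality and Lemma~\ref{lem-bound-Lambda} with $|y|\ge 1$ finish, and the stated $c_9$ comes from $j=2$. The gap is in the step you flag yourself, and your plan for it fails in exactly the case that defines $c_9$.

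For $j=2$ the minor factor is $1-(\lambda_1/\lambda_2)^t$. As you computed, $\abs{\lambda_1/\lambda_2}=\abs{\lambda\lambda_2^2}^{-1}$ exactly, so there is no margin to absorb the factor $2$ of Corollary~\ref{cor-log-12}. Lemma~\ref{lem-log-bound} cannot give the constant $1$ either, since $-\log(1-x)>x$ for $x=\abs{\lambda\lambda_2^2}^{-t}$.

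Your margin idea only helps with $\abs{\lambda_1/\lambda}$, i.e.\ case $j=0$. There the correct identity is $\abs{\lambda_1/\lambda}=\abs{\lambda\lambda_2^2}^{-1}(\lambda+1)/\lambda^2$. Your factor $(\lambda+1)/\lambda$ would make this ratio larger than $\abs{\lambda\lambda_2^2}^{-1}$, contradicting the margin you want. Also, for $j=1$ the residual involves $1-(\lambda_1/\lambda)^t$ and $1-(\lambda_1/\lambda_2)^t$, not $1-(\lambda_2/\lambda)^t$. That slip is harmless, because $\abs{\lambda_1/\lambda}^t$ is far below $\abs{\lambda_2/\lambda}^t$.

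What actually decides $j=2$ is the sign pattern. Since $\lambda_1/\lambda_2>0$, the term $\log(1-(\lambda_1/\lambda_2)^t)$ is negative.
\begin{itemize}
\item For even $t$, the term $-\log(1-(\lambda_2/\lambda)^t)$ is positive, so the two partially cancel and $\abs{r_2}\le\abs{\log(1-(\lambda_2/\lambda)^t)}$. This is better than required.
\item For odd $t$, both terms are negative and $\abs{r_2}=-\log(1-x)+\kappa_2x$. This exceeds $(1+\kappa_2)x$ by about $x^2/2$.
\end{itemize}
Keeping the constant exactly $1$ therefore means paying for this second-order surplus out of slack in the $\Lambda$-term. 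For $|y|\ge2$ the factor $|y|^{-3}$ in Lemma~\ref{lem-bound-Lambda} supplies it. For $|y|=1$ you must reopen the estimates behind $c_5$; each use of $\abs{\beta_i}\ge\frac12|y|\abs{\alpha_i-\alpha_j}$ gives away a factor $2$, so there is room.

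The paper does not do this either. It records only the one-sided estimate $\log(1-(\lambda_1/\lambda_2)^t)\le\abs{\lambda_1/\lambda_2}^t$. Your fallback of enlarging the constant $1$ is harmless for everything downstream, but it proves a weaker statement than the lemma as written.
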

	\begin{proof}
		For $j=0$ 
		\begin{align*}
			\log \abs{\frac{\alpha_2 - \alpha_0}{\alpha_1 - \alpha_0} }
			&\leq \log \left(1 - \left(\frac{\lambda_2}{\lambda}\right)^t\right)
			- \log \left(1 - \left(\frac{\lambda_1}{\lambda}\right)^t\right)\\
			&\leq \abs{\frac{\lambda_2}{\lambda}}^t + \abs{\frac{\lambda_1}{\lambda}}^t,
		\end{align*}
		thus
		\begin{align*}
			|\Lambda'| &\leq c_5 |y|^{-3} \frac{1}{\lambda^{3t}} + 
			\abs{\frac{\lambda_2}{\lambda}}^t + \abs{\frac{\lambda_1}{\lambda}}^t\\
			&\leq c_9 \abs{\lambda \lambda_2^2}^{-t},
		\end{align*}
        where
        \begin{align*}
            c_9 = c_5 \abs{\frac{\lambda_2}{\lambda}}^{2t} 
                + |\lambda_2|^{3t} 
                + \abs{\frac{\lambda_2}{\lambda}}^t.
        \end{align*}
		
		For $j=1$, we get 
		\begin{align*}
			\log \abs{\frac{\alpha_0 - \alpha_1}{\alpha_2 - \alpha_1} } 
			&= t \log |\lambda| - t \log |\lambda_2| 
			+ \log \left(1 - \left(\frac{\lambda_1}{\lambda}\right)^t\right)
			- \log \left(1 - \left(\frac{\lambda_1}{\lambda_2}\right)^t\right)\\
			&\leq t \log |\lambda| - t \log |\lambda_2| 
			+ \abs{\frac{\lambda_1}{\lambda}}^t 
			+ \kappa_1 \abs{ \lambda \lambda_2^2}^{-t},
		\end{align*}
        with 
        $\kappa_1 = \abs{\log \left(1 - \abs{\frac{\lambda_1}{\lambda_2}}^t\right)} 
            \abs{\lambda\lambda_2}^t.$
        
		Noting that $\lambda_1 =\frac{1}{\lambda\lambda_2}$, we consequently get
		\begin{align*}
			\abs{\Lambda'} &\leq c_5 |y|^{-3} \abs{\lambda\lambda_2^2}^{-t}
			+ \abs{\frac{\lambda_2}{\lambda}\cdot \frac{1}{\lambda \lambda_2^2}}^t
			+  \kappa_1 \abs{\lambda \lambda_2^2}^{-t} \\
			&\leq c_9 \abs{\lambda\lambda_2^2}^{-t},
		\end{align*}
        with $c_9 = c_5 + \abs{\frac{\lambda_2}{\lambda}}^t + \kappa_1.$
		
		For $j=2$
		\begin{align*}
			\log \abs{\frac{\alpha_1 - \alpha_2}{\alpha_0 - \alpha_2} } 
			&= t \log |\lambda_2| - t \log |\lambda| 
			+ \log \left(1 - \left(\frac{\lambda_1}{\lambda_2}\right)^t\right)
			- \log \left(1 - \left(\frac{\lambda_2}{\lambda}\right)^t\right)\\
			&\leq t \log |\lambda_2| - t \log |\lambda| 
			+ \abs{\frac{\lambda_1}{\lambda_2}}^t 
            + \kappa_2 \abs{\lambda \lambda_2^2}^{-t} ,
		\end{align*}
        where 
        $\kappa_2 = \abs{\log \left(1 - \left(\frac{\lambda_2}{\lambda}\right)^t\right)} 
            \abs{\lambda\lambda_2^2}^t.$
		Consequently, we get
		\begin{align*}
			\abs{\Lambda'} &\leq c_5 |y|^{-3} \abs{\lambda\lambda_2^2}^{-t}
			+ \abs{\lambda\lambda_2^2}^{-t}
			+ \kappa_2 \abs{\lambda \lambda_2^2}^{-t} \\
			&\leq c_9 \abs{\lambda\lambda_2^2}^{-t},
		\end{align*}
        where
        $c_9 = c_5 + 1 + \kappa_2.$
        
        It is easy to see that $c_9$ is maximal in the case $j=2$, thus we will use this bound 
        in the subsequent steps.
	\end{proof}
	
	\begin{lemma}
		\label{lem-AB-lambda}
		For $n\geq 1$, we have
		$$\lambda \log \lambda |A| - \frac{c_9}{\lambda}
			< |B| < 
			(\lambda+1) \log \lambda |A| + \frac{c_9}{\lambda} $$
        and consequently 
        $$|A| < |B|.$$
	\end{lemma}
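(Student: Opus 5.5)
Starting from Lemma~\ref{lem-bound-L'}, I would divide $\Lambda' = A\log|\lambda| + B\log|\lambda_2|$ by $\log|\lambda_2|$. This gives
$$\left| B + A\frac{\log\lambda}{\log|\lambda_2|}\right| < \frac{c_9}{\log|\lambda_2|}\,\abs{\lambda\lambda_2^2}^{-t}.$$
The whole task is then to locate the ratio $\log\lambda/\log|\lambda_2|$ precisely in terms of $\lambda$, and to check that the error term is at most $c_9/\lambda$.

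The first step is the ratio. We have $\lambda_2 = -(\lambda+1)/\lambda$, so $\log|\lambda_2| = \log(1+1/\lambda)$. From the elementary bounds
$$\frac{1}{\lambda+1} < \log\left(1+\frac1\lambda\right) < \frac1\lambda$$
it follows that
$$\lambda\log\lambda < \frac{\log\lambda}{\log|\lambda_2|} < (\lambda+1)\log\lambda .$$
Since $\lambda>1$, we have $\log\lambda>0$ and $\log|\lambda_2|>0$. Hence $|B|$ is within the error term of $|A|\cdot\log\lambda/\log|\lambda_2|$: by the triangle inequality, $\bigl||B| - |A|\log\lambda/\log|\lambda_2|\bigr|$ is bounded by the right-hand side above. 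The signs of $A$ and $B$ do not matter here.

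The second step is the error term. Using $\log|\lambda_2| > 1/(\lambda+1)$, it is at most $c_9(\lambda+1)\abs{\lambda\lambda_2^2}^{-t}$. Since $|\lambda_2|>1$ and $t\ge 2$, we get $\abs{\lambda\lambda_2^2}^{-t} \le \lambda^{-2}$. It therefore remains to show $(\lambda+1)/\lambda^2 \le 1/\lambda$ up to the slack available. This fails as it stands, since $(\lambda+1)/\lambda > 1$. So I would use the extra factor $|\lambda_2|^{-2t} \le (\lambda/(\lambda+1))^{4}$, which more than absorbs the factor $(\lambda+1)/\lambda$. This gives error $\le c_9/\lambda$ and both inequalities follow. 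Here $n\ge1$ is needed to ensure $\lambda$ exceeds a small threshold (roughly $\lambda>2$), and I expect this bookkeeping to be the only delicate point.

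For the consequence $|A|<|B|$: if $A=0$, then $B\neq 0$, because $\Lambda'$ cannot vanish (for instance, $B=0$ together with $A=0$ contradicts the lower bound in Lemma~\ref{lem-B-logy-log-lam-t}). If $|A|\ge 1$, the lower bound gives
$$|B| > \lambda\log\lambda\,|A| - \frac{c_9}{\lambda} > |A|,$$
provided $\lambda\log\lambda - 1 > c_9/\lambda$. This holds for $n\ge1$ by a numerical check, since $\lambda > n$ and $c_9$ is bounded by a modest absolute constant.
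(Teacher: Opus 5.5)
Your proof of the two-sided inequality is correct and follows the paper's argument. Both rest on $\frac{1}{\lambda+1}<\log|\lambda_2|<\frac1\lambda$ and on the smallness of $\Lambda'$ from Lemma~\ref{lem-bound-L'}. Your error bound $c_9\lambda^2/(\lambda+1)^3<c_9/\lambda$ is actually sharper than the paper's, which on the upper side only reaches $c_9(\lambda+1)/\lambda^2<c_9/(\lambda-1)$. This part needs only $\lambda>1$, not ``roughly $\lambda>2$''; note that for $n=1$ one has $\lambda=2\cos(\pi/9)\approx 1.879$.

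The gap is in the consequence $|A|<|B|$. Your claim that $\lambda\log\lambda-1>c_9/\lambda$, i.e.\ $\lambda^2\log\lambda-\lambda>c_9$, ``holds for $n\ge 1$ by a numerical check'' is false, because $c_9$ is not a modest absolute constant.

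\begin{itemize}
\item Since $-\log(1-c_4)/c_4\ge 1$, we have $c_5\ge c_3$. Also $c_3>8$, so $c_9=c_5+1+\kappa_2>9$ always.
\item $c_9$ depends on $n$ and $t$; its $\kappa_2$-term behaves like $|\lambda_2|^{3t}$.
\item The paper only establishes $c_9<10.0001$ for $n\ge 10^5$.
\end{itemize}

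For $n=1$ one gets $\lambda^2\log\lambda-\lambda\approx 0.35$, and for $n=2$ ($\lambda\approx 2.651$) about $4.2$. So the required inequality fails for every $t$ in both cases.

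No better constant can fix this, because the conclusion itself is false at $n=1$. Take $(n,t)=(1,2)$ and the listed solution $(x,y)=(2,1)$. Since $\lambda^2=2-\lambda_2$ there, you get $\beta_0=\lambda_2$, $\beta_1=\lambda$, $\beta_2=\lambda_1=\lambda^{-1}\lambda_2^{-1}$. So the solution is of type $j=2$ with $a=b=-1$, which gives $A=2b-a-t=-3$ and $B=b-2a+t=3$, hence $|A|=|B|$. Similarly, $(7,3)$ has $\beta_2=1+3\lambda_1=\lambda_1^3$, so $a=b=-3$, $A=-5$, $B=5$.

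Thus $|A|<|B|$ can only be derived from the two-sided bound when $\lambda^2\log\lambda-\lambda>c_9$, that is, for sufficiently large $n$. An example is the range $n\ge 10^5$, where the paper works with $c_9<10.0001$ (e.g.\ Lemma~\ref{lem-est-AB-nlogn}). The paper itself just writes ``consequently'' here, so the defect is inherited. Still, your proposal asserts a verification that does not go through.

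A smaller point concerns the case $A=0$. Do not rely on the lower bound of Lemma~\ref{lem-B-logy-log-lam-t}, since $c_8'>0$ is only checked numerically for large $n$. Instead argue directly, as in the proof of Lemma~\ref{lem-L'-laurent}, that $A=B=0$ forces a trivial solution.
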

	\begin{proof}
		Since
		\begin{align*}
			\abs{A \log \lambda + B \log |\lambda_2|} &= |\Lambda'|
			< \frac{c_9}{|\lambda\lambda_2|^t}
		\end{align*}
		and
		$$ \frac{1}{\lambda+1} < \log |\lambda_2| < \frac{1}{\lambda}, $$
		we get
		\begin{align*}
			|A| \lambda \log \lambda - |B| &< \frac{c_9}{\lambda},\\
			|B| \frac{\lambda}{\lambda+1} - \lambda \log \lambda |A| 
			&< \frac{c_9}{\lambda}.
		\end{align*}
		We note that since $\lambda>n\geq1$, we have
		$\frac{\lambda+1}{\lambda^2} < \frac{1}{\lambda-1}.$
		Thus, we get the stated inequality.
	\end{proof}

\subsection{Finding an upper bound}
	In this section, we derive upper bounds for $n$ and $t$.
	In the next section, we shall address the remaining cases.
		
	We will need some estimates for $\lambda_i$ and $\log|\lambda_i|$.
	\begin{lemma}
		\label{lem-est-lambda}
		We have
		\begin{align*}
			n + \frac2{n-1} - \frac{3}{(n-1)^2} &< \lambda < n + \frac 2{n-1}; \\ 
			- \frac{1}{n+1} &< \lambda_1 = - \frac{1}{\lambda + 1} 
			< - \frac{1}{n+2}; \\
			- 1 - \frac 1n &< \lambda_2 =  - \frac{\lambda + 1}{\lambda}
			< - 1 - \frac{1}{n+1}.
		\end{align*}
	\end{lemma}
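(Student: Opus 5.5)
The plan is to use the intermediate value theorem on the cubic $f_n(X)=X^3-(n-1)X^2-(n+2)X-1$ for the bounds on $\lambda$. The bounds on $\lambda_1$ and $\lambda_2$ then follow from the explicit formulas $\lambda_1=-1/(\lambda+1)$ and $\lambda_2=-(\lambda+1)/\lambda$. Throughout we have $n\geq 2$, so $n-1\geq 1$ and no denominator vanishes.

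\emph{Upper bound for $\lambda$.} Since $f_n$ has positive leading coefficient and $\lambda=\lambda_0$ is its largest real root, it suffices to show $f_n(u)>0$ at $u=n+\frac{2}{n-1}$. Then $\lambda<u$, because $f_n>0$ on $(\lambda,\infty)$ and the sign pattern of a cubic with three real roots forces any point with positive value to the right of $\lambda_1$ to lie beyond $\lambda$. Writing $u=n+\varepsilon$, the identity $f_n(X)=X^2(X-n+1)-(n+2)X-1$ gives
\[
f_n(n+\varepsilon)=(n+\varepsilon)\bigl((n+\varepsilon)(1+\varepsilon)-(n+2)\bigr)-1 .
\]
The inner factor is $(n-1)\varepsilon+\varepsilon+\varepsilon^2-2$. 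With $\varepsilon=\frac{2}{n-1}$ this equals $\varepsilon+\varepsilon^2$, so $f_n(u)=u(\varepsilon+\varepsilon^2)-1$. This is clearly positive, because $u\varepsilon>2$.

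\emph{Lower bound for $\lambda$.} Put $\varepsilon=\frac{2}{n-1}-\frac{3}{(n-1)^2}$, which is positive for $n\geq 3$. The case $n=2$ needs a direct check: there the bound reads $1<\lambda$, which holds since $f_2(1)=-3<0$. Then $(n-1)\varepsilon-2=-\frac{3}{n-1}$, and the inner factor becomes $-\frac{3}{n-1}+\varepsilon+\varepsilon^2$. We have $\varepsilon<\frac{2}{n-1}$, and $\varepsilon^2<\frac{4}{(n-1)^2}\leq\frac{1}{n-1}$ once $n\geq 5$. The remaining small values $n=3,4$ are checked numerically. For $n\geq 5$ the inner factor is therefore negative, so $f_n(n+\varepsilon)<0$. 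Since $n+\varepsilon>0>\lambda_1>\lambda_2$, this forces $\lambda>n+\varepsilon$. The main obstacle is exactly this bookkeeping, namely making the inner factor strictly negative for all $n$ rather than just asymptotically; the small cases are finitely many and handled numerically.

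\emph{Conjugates.} From $n<\lambda<n+1$, which follows from the bounds above since $\frac{2}{n-1}\leq 1$ for $n\geq3$ and is checked directly for $n=2$, we get $n+1<\lambda+1<n+2$. Hence
\[
-\frac{1}{n+1}<\lambda_1=-\frac{1}{\lambda+1}<-\frac{1}{n+2}.
\]
Likewise $\lambda_2=-1-\frac{1}{\lambda}$ with $\frac{1}{n+1}<\frac{1}{\lambda}<\frac{1}{n}$, which gives
\[
-1-\frac{1}{n}<\lambda_2<-1-\frac{1}{n+1}.
\]
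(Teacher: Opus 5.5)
Your strategy of locating $\lambda$ by sign changes of $f_n$ and then transferring to $\lambda_1,\lambda_2$ is sound. The upper bound is fine. The conjugate bounds only need $n<\lambda<n+1$, which you can get directly from $f_n(n)=-2n-1<0<n^2+n-1=f_n(n+1)$.

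However, your expansion of $f_n(n+\varepsilon)$ has an algebra slip that breaks the lower bound for $\lambda$. In fact
\[
(n+\varepsilon)(1+\varepsilon)-(n+2)=(n+1)\varepsilon+\varepsilon^2-2=(n-1)\varepsilon+2\varepsilon+\varepsilon^2-2,
\]
not $(n-1)\varepsilon+\varepsilon+\varepsilon^2-2$. For the upper bound this is harmless, since the true value $f_n(u)=u(2\varepsilon+\varepsilon^2)-1$ is even larger. For the lower bound, take $\varepsilon=\frac{2}{n-1}-\frac{3}{(n-1)^2}$. Then the inner factor is really $-\frac{3}{n-1}+2\varepsilon+\varepsilon^2=\frac{1}{n-1}-\frac{6}{(n-1)^2}+\varepsilon^2$. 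This is \emph{positive} for large $n$; at $n=10$ it is about $0.071$. So your claim that the inner factor is negative for $n\ge 5$ is false, and $f_n(n+\varepsilon)<0$ does not follow from it.

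The inequality $f_n(n+\varepsilon)<0$ is nevertheless true. It holds only because the product $(n+\varepsilon)\bigl((n+1)\varepsilon+\varepsilon^2-2\bigr)=1-\frac{1}{n-1}+O\bigl((n-1)^{-2}\bigr)$ falls just short of $1$. The lower bound is sharp to second order: $\lambda-n=\frac2n-\frac1{n^2}-\frac3{n^3}+\dots$, while the bound is $\frac2n-\frac1{n^2}-\frac4{n^3}+\dots$. So crude estimates of the terms will not suffice, and you need an exact computation.

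One clean repair is to set $m=n-1$ and compute exactly:
\[
m^6 f_n\Bigl(n+\frac{2}{m}-\frac{3}{m^2}\Bigr)=-\bigl(m^5+12m^4+10m^3+9m^2-54m+27\bigr).
\]
This equals $-5$ for $m=1$ and is negative for $m\ge 2$, since $12m^4>54m$. It covers all $n\ge 2$ at once, with no separate numerical checks. It also shows that $f_2(1)=-5$, not $-3$.

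With this repair your argument becomes a valid self-contained proof. The paper itself gives no argument and simply cites Lettl, Peth\H{o} and Voutier.
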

	\begin{proof}
		See \cite{lettl-petho-voutier}.
	\end{proof}

	This yields 

	\begin{cor}
		\label{lem-bounds-loglambda}
		We have
		\begin{align*}
			\log n &< \log \lambda < \log (n+1) + \log n + \frac{1}{n^2};\\
			- \log n - \frac 2n &< \log |\lambda_1| < - \log n - \frac 2n;\\
			\frac 1{n} - \frac{2}{n^2} &< \log |\lambda_2| < \frac 1n + \frac{1}{n^2}.
		\end{align*}
	\end{cor}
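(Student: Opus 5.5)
The plan is to take logarithms of the three chains of inequalities in Lemma~\ref{lem-est-lambda}. The remaining work is elementary estimates of $\log(1+x)$ for $0<x\le 1$, namely
$$x-\frac{x^2}{2}\le \log(1+x)\le x .$$
Throughout we keep the standing assumption $n\ge 2$, and any finitely many small $n$ that the clean estimates miss are checked numerically.

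\emph{First chain.} Lemma~\ref{lem-est-lambda} gives $n<\lambda$, since $\frac{2}{n-1}-\frac{3}{(n-1)^2}=\frac{2n-5}{(n-1)^2}$. This is positive for $n\ge 3$, and for $n=2$ one checks $\lambda>2$ directly from $f_2$. Hence $\log n<\log\lambda$. For the upper bound, $\lambda<n+\frac{2}{n-1}\le n+2$ gives
$$\log\lambda<\log n+\log\left(1+\frac{2}{n}\right)\le \log n+\frac{2}{n},$$
which is already far below the stated (rather generous) right-hand side $\log(n+1)+\log n+\frac1{n^2}$.

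\emph{Second chain.} Use $\log|\lambda_1|=-\log(\lambda+1)$ together with $n+1<\lambda+1<n+1+\frac{2}{n-1}$. The lower bound follows from
$$\log(\lambda+1)<\log n+\log\left(1+\frac{1}{n}+\frac{2}{n(n-1)}\right)\le \log n+\frac1n+\frac{2}{n(n-1)}\le \log n+\frac2n$$
for $n\ge 3$, with $n=2$ checked numerically. For the upper bound I would use $\log(\lambda+1)>\log(n+1)\ge \log n+\frac1n-\frac1{2n^2}$. As printed, the two sides of this chain coincide, so the intended upper bound must be of the shape $-\log n-\frac1n+\frac1{2n^2}$ (or similar), and I would prove it in that corrected form.

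\emph{Third chain.} Write $\log|\lambda_2|=\log\left(1+\frac1\lambda\right)$. The upper bound is immediate: $\log(1+1/\lambda)\le 1/\lambda<1/n<\frac1n+\frac1{n^2}$. For the lower bound,
$$\log\left(1+\frac1\lambda\right)\ge\frac1\lambda-\frac1{2\lambda^2}>\frac1\lambda-\frac1{2n^2}.$$
Moreover $\frac1n-\frac1\lambda=\frac{\lambda-n}{n\lambda}<\frac{2}{(n-1)n^2}$. So it suffices that
$$\frac{2}{n-1}+\frac12\le 2,$$
which holds for $n\ge 3$; $n=2$ is again a direct numerical check.

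\emph{Main obstacle.} The only genuine difficulty is this last lower bound. Using the crude estimate $\lambda<n+2$ there fails for $n\ge 6$, so the sharper bound $\lambda-n<\frac{2}{n-1}$ from Lemma~\ref{lem-est-lambda} is essential.
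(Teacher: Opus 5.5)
Your argument is correct and takes the same route as the paper. The paper gives no proof beyond ``This yields'', meaning the bounds come from taking logarithms in Lemma~\ref{lem-est-lambda}, and that is what you carry out, with elementary $\log(1+x)$ estimates and a separate check of $n=2$. You are also right that the middle chain as printed cannot hold, since both sides equal $-\log n-\frac2n$; this is a typo in the statement, and your upper bound $-\log n-\frac1n+\frac1{2n^2}$, obtained from $\lambda+1>n+1$, is a valid correction.
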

	
	We assume that $n \geq 10^5$ and apply the method from Chapter~\ref{sec-tw}
    to compute the constants $c_1,\dots,c_9$. 
    It is sufficient to compute the constants for $n=10^5$ and $t=2$, 
    as they decrease with increasing $n,t$. Therefore, we obtain
    \begin{alignat*}{3}
		c_1 &< 4.0001, & c_6 &< 2.061, \\
		c_2 &< 0.7, & c_7 &< 1.031, \\
		c_3 &< 8.0001, & c_8' &> 2.849, \\
		c_4 &< 8 \cdot10^{-10}, \qquad &c_8 &< 3.151, \\
		c_5 &< 8, & c_9 &< 10.0001.
	\end{alignat*}
    
	Looking at $c_8'$ and $c_8$, we get
	\begin{equation} \label{eq-large-B-bound}
		2.849 \left(\frac{\log|y|}{\log n} + t\right) 
		< B <  3.151 \left(\frac{\log|y|}{\log n} + t\right).
	\end{equation} 
	
	Further, we have
	\begin{align}
		\label{eq-large-el-bounds-bj}
			\abs{\beta_j} &< \frac{c_1}{|y|^2 \abs{\lambda\lambda_2}^{-t}} 
			< \frac{4.0001}{|y|^2\cdot n^t},
	\end{align}
    \begin{align}
		\label{eq-large-el-bounds-L}
		\abs{\Lambda} &\leq \frac{c_5}{|y|^3 \abs{\lambda\lambda_2^2}^t} 
			< \frac{8}{|y|^3 n^t}
	\end{align}
    and
    \begin{align}
		\label{eq-large-el-bounds-L'}
		\abs{\Lambda'} &< \frac{c_9}{\abs{\lambda\lambda_2^2}^t} <\frac{10.0001}{n^t}.
    \end{align}
	
	In addition, it is easy to see that
	\begin{equation} \label{eq-height-large}
		h_m(\lambda_i) = h(\lambda_i) = \frac 13 \abs{\log |\lambda_1|}
			< \frac{1.0001}{3} \log n.
	\end{equation}
	\begin{lemma}
		\label{lem-est-AB-nlogn}
		We have
		$$ 0.9999\cdot n \log n |A| < |B| < 1.01 \cdot n \log n |A| .$$
	\end{lemma}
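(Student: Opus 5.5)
The plan is to sharpen Lemma~\ref{lem-AB-lambda} using the explicit estimates now available for $n\ge 10^5$ and $t\ge 2$. The starting point is the bound \eqref{eq-large-el-bounds-L'}:
$$\abs{A\log\lambda + B\log|\lambda_2|} = |\Lambda'| < \frac{10.0001}{n^t}.$$
Since $\log\lambda>0$ and $\log|\lambda_2|>0$, this forces $A$ and $B$ to have opposite signs (or $A=B=0$). The case $A=B=0$ should be excluded separately: together with \eqref{eq-large-B-bound} it would give $0>2.849\,t$, which is absurd. Dividing by $\log|\lambda_2|$ then gives
$$\Bigl||B| - |A|\frac{\log\lambda}{\log|\lambda_2|}\Bigr| < \frac{10.0001}{n^t\log|\lambda_2|}.$$

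Next I would estimate the ratio $\log\lambda/\log|\lambda_2|$ by $n\log n$ from both sides, using Lemma~\ref{lem-est-lambda} rather than the cruder Corollary~\ref{lem-bounds-loglambda}.

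\begin{itemize}
\item For $\log\lambda$: since $n<\lambda<n+\frac{2}{n-1}$, we have $\log n<\log\lambda<\log n+\frac{2}{n(n-1)}$.
\item For $\log|\lambda_2|=\log(1+1/\lambda)$: it lies between $\frac1\lambda-\frac1{2\lambda^2}$ and $\frac1\lambda$, so it is $\frac1n(1+L(2/n))$.
\end{itemize}

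Hence $\log\lambda/\log|\lambda_2| = n\log n\,(1+\varepsilon)$ with $|\varepsilon|$ of order $1/n$, certainly below $10^{-4}$ for $n\ge 10^5$. The error term $\frac{10.0001}{n^t\log|\lambda_2|}$ is below $\frac{10.0001(n+2)}{n^2}<10^{-3}$ for $t\ge2$.

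Combining these, I expect
$$(1-10^{-4})\,n\log n\,|A| - 10^{-3} < |B| < (1+10^{-4})\,n\log n\,|A| + 10^{-3}.$$

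The main obstacle is absorbing the additive constant $10^{-3}$ into the multiplicative factors. For this I need $|A|\ge1$, so I must rule out $A=0$. If $A=0$, then $|B|<10^{-3}$, so $B=0$, which contradicts \eqref{eq-large-B-bound} since the right side there is at least $2.849\cdot 2$. With $|A|\ge 1$, we have $n\log n|A|\ge 10^5\log 10^5$, so the additive $10^{-3}$ is tiny relative to it. This yields the lower bound $0.9999\,n\log n|A|$; the lower bound with $1-10^{-4}$ needs a little care, which is why I would keep $\varepsilon$ explicit (roughly $1/(2n)$). The upper bound with $1.01$ then holds with ample room.
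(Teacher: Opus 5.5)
Your argument is correct and is essentially the paper's proof. The paper simply cites Lemma~\ref{lem-AB-lambda}, which is obtained from the smallness of $\Lambda'$ exactly as you do, together with $c_9<10.01$; your explicit exclusion of $A=0$ fills in a step the paper leaves implicit (for $A=0$ the stated inequality cannot hold), and the one-sided bounds $\log\lambda>\log n$, $\log|\lambda_2|<1/\lambda<1/n$ already give $\log\lambda/\log|\lambda_2|>n\log n$, so the ``little care'' you anticipate for the lower bound is automatic.
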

	\begin{proof}
		Follows from Lemma~\ref{lem-AB-lambda}, since $c_9<10.01$.
	\end{proof}

	Now, we can apply Laurent's result (Corollary~\ref{cor-laurent-2}) to the linear form $\Lambda'$.
	\begin{lemma}
		\label{lem-L'-laurent}
		Assume that $n\geq 10^5$ and $t\geq 2$. Then we have
		$$\log |\Lambda'| > -173 \left( \log \left( \frac{B}{\log n} \right)\right)^2 (\log n)^2 $$
        and
		$$\frac{t}{\log n} < 173 \left( \log \left( \frac{B}{\log n} \right)\right)^2 .$$
	\end{lemma}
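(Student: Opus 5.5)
We apply Corollary~\ref{cor-laurent-2} to the linear form
$$\Lambda' = A\log\lambda + B\log|\lambda_2|,$$
after rewriting it in the shape $b_2\log\alpha_2 - b_1\log\alpha_1$ with real positive logarithms.

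\textbf{Setting up the linear form.} Take $\alpha_1=\lambda$ and $\alpha_2=|\lambda_2| = (\lambda+1)/\lambda$; both exceed $1$, so their logarithms are real and positive. By Lemma~\ref{lem-AB-lambda}, $|A|\lambda\log\lambda$ and $|B|$ agree up to a tiny error, while $|\Lambda'|$ is small. Hence $A$ and $B$ have opposite signs, unless $A=B=0$, which is excluded by the nonvanishing of $\Lambda'$. After possibly changing the overall sign we may write $|\Lambda'| = |\,|B|\log\alpha_2 - |A|\log\alpha_1|$, so $b_1=|A|$ and $b_2=|B|$. The numbers $\lambda$ and $\lambda_2$ are fundamental units, hence multiplicatively independent, and so are $\lambda$ and $|\lambda_2|$. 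The field $\QQ(\lambda)$ is totally real cubic, so $d=3$.

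\textbf{Choosing the parameters.} By \eqref{eq-height-large}, $h_m(\lambda)$ and $h_m(\lambda_2)$ are both less than $\frac{1.0001}{3}\log n$; this also dominates $|\log\alpha_i|/3$ and $1/3$ once $n\ge 10^5$. So we may take $\log A_1=\log A_2 = \frac{1.0001}{3}\log n$. Then
$$b' = \frac{|A|+|B|}{1.0001\log n} < \frac{B}{\log n},$$
where we use $|A|<|B|/(0.9999\,n\log n)$ from Lemma~\ref{lem-est-AB-nlogn}, so $|A|$ is negligible against the slack $1-1/1.0001$.

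Corollary~\ref{cor-laurent-2} then gives
$$\log|\Lambda'| \ge -C_2(m)\cdot 81\cdot \max\{\log b'+0.38,\ m/3,\ 1\}^2 \cdot \frac{1.0001^2}{9}(\log n)^2.$$
We pick $m$ and then bound the maximum by a constant multiple of $\log(B/\log n)$. The admissible size of this multiple comes from a lower bound on $\log(B/\log n)$, obtained as follows. By \eqref{eq-large-B-bound}, $B>2.849\,t\ge 5.698$. Moreover $|A|\ge 1$ (if $A=0$ then $|\Lambda'| = |B|\log|\lambda_2|\ge 1/(\lambda+1)$, contradicting \eqref{eq-large-el-bounds-L'}), so Lemma~\ref{lem-est-AB-nlogn} gives $B>0.9999\,n\log n$. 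Hence $B/\log n > 0.9999\cdot 10^5$ and $\log(B/\log n)>11.5$. With, say, $m=10$ or $m=12$, we get $m/3\le 4$ and $\log b'+0.38 \le 1.034\log(B/\log n)$. The constant is then roughly $C_2(m)\cdot 9\cdot 1.0002\cdot 1.034^2$, which for $C_2=18.4$ is about $177$. To reach $173$ we choose $m$ as large as keeps $m/3\le\log(B/\log n)$ and use the sharper relative error $0.38/11.5$; this constant bookkeeping is the main, albeit routine, obstacle. I would tabulate over $m$ and take $m=26$ or $m=30$ with $C_2\approx 18$, checking that $m/3\le 11.5$, so that $9\cdot 17.9\cdot(1+0.38/11.5)^2\approx 172.6<173$.

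\textbf{Second inequality.} Combine the lower bound with the upper bound \eqref{eq-large-el-bounds-L'}:
$$\log|\Lambda'| < \log 10.0001 - t\log n.$$
This gives
$$t\log n < 173\,(\log(B/\log n))^2(\log n)^2 + \log 10.0001.$$
Divide by $(\log n)^2$. The additive term $\log 10.0001/(\log n)^2$ is absorbed by the slack in the constant $173$, or by the strictness of the earlier estimate. This yields $t/\log n < 173\,(\log(B/\log n))^2$.
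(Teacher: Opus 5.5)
Your proposal is correct and follows essentially the same route as the paper. Both apply Corollary~\ref{cor-laurent-2} to $\Lambda'$ with $d=3$, $\log A_1=\log A_2=\frac{1.0001}{3}\log n$, $b'<B/\log n$, $m=30$ and $C_2=17.9$, use the lower bound $\log(B/\log n)>11.5$, and then compare with \eqref{eq-large-el-bounds-L'}; your explicit justification of $|A|\ge 1$ and of the opposite signs of $A,B$ supplies details the paper leaves implicit, and the only slip is that $m=26$ (or $m=28$) gives a constant above $173$ (about $177$ for $m=26$), so $m=30$ is the only choice that works.
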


	\begin{proof}
		We will apply Laurent's Corollary~\ref{cor-laurent-2} on
		$$ \Lambda' = A \log |\lambda| + B \log |\lambda_2| .$$
		
		First, we need to show that $\Lambda' \neq 0$.
		Since $\lambda$ and $\lambda_2$ are fundamental units, they are
        multiplicatively independent.
        Therefore $\Lambda' = 0$ if and only if 
		$A = B = 0$, which implies $a=b=0$, thus $x= \pm1$ and
		$y = 0$. 
		Since we assumed for $(x,y)$ to be a non-trivial solution,
		we can conclude that $\Lambda'\neq 0$.
		
		We set
		\begin{align*}
			b' &= \frac{A}{1.0001 \log n} + \frac{B}{1.0001 \log n} \\
			&< \frac{\left(1 + \frac{1}{0.99n \log n}\right) B}{1.0001 \log n}\\
			&< \frac{B}{\log n}.
		\end{align*}
		
		Next, we apply Laurent's Corollary~\ref{cor-laurent-2} with $m =30$ and
		$C_2 =17.9$, which yields
		$$\log |\Lambda'| > - 17.9 \cdot3^4 \cdot 
		\left( \max \left\{ \log \left( \frac{B}{\log n} \right) + 0.38,
		10 \right\} \right)^2
		\left( \frac{1.0001}{3}\log n \right)^2.$$
		
		Since
		\begin{align*}
			\log \left(\frac{B}{\log n}\right) > \log(0.99 n) > \log (9.9\cdot10^4) > 11.5 > 10,
		\end{align*}
		we may assume that the maximum in the above inequality is
		$\log \left( \frac{B}{\log n} \right)$. Hence, we get
		\begin{align*}
			\left( \log \left( \frac{B}{\log n} \right) + 0.38 \right)^2 
			&< \left( \frac{11.5 +0.38}{11.5} \right)^2 
			\left(\log \left( \frac{B}{\log n} \right) \right)^2 \\
			&< 1.07 \cdot \left(\log \left( \frac{B}{\log n} \right) \right)^2.
		\end{align*}
		
		This leaves us with
		\begin{align*}
			\log |\Lambda'| &>
			- 17.9\cdot 3^4 \cdot 1.07 \cdot \left(\frac{1.0001}{3}\right)^2
			\left(\log \left( \frac{B}{\log n} \right) \right)^2 (\log n)^2\\
			&> - 172.5 \left(\log \left( \frac{B}{\log n} \right) \right)^2 (\log n)^2.
		\end{align*}
		
		Further, we have the upper bound
		$$\log |\Lambda'| < \log (10.01) - t \log(n).$$
		
		Combining the preceding inequalities, we obtain
		$$ \frac{t}{\log n} 
		< 173 \cdot \left( \log \left( \frac{B}{\log n} \right) \right)^2. $$
	\end{proof}
	
	\begin{lemma}
		\label{lem-t>logylogn}
		If $t> \frac{\log |y|}{\log n}$, then we have
		$$ n \leq 157442. $$
	\end{lemma}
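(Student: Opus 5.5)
The plan is to combine the upper bound for $B$ from \eqref{eq-large-B-bound} with the lower bound for $t/\log n$ from Lemma~\ref{lem-L'-laurent}, which will give an inequality in $t$ and $n$ alone. We argue by contradiction: suppose $n > 157442$, so in particular $n\geq 10^5$, and all constants $c_1,\dots,c_9$ computed above are available.

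Under the hypothesis $t>\frac{\log|y|}{\log n}$, the bound \eqref{eq-large-B-bound} gives
$$|B| < 3.151\left(\frac{\log|y|}{\log n}+t\right) < 6.302\, t .$$
Substituting this into the second inequality of Lemma~\ref{lem-L'-laurent} yields
$$\frac{t}{\log n} < 173\left(\log\frac{6.302\,t}{\log n}\right)^2 .$$
Set $x = 6.302\,t/\log n$. Then $x < 6.302\cdot 173\,(\log x)^2 \approx 1090.3\,(\log x)^2$. Applying Lemma~\ref{lem-petho-dew} with $a=0$, $k=2$ and $b\approx 1090.3 > (e^2/2)^2$ gives an explicit bound $x < X_0$; numerically it is roughly $4b(\log 4b)^2$, of order $3.5\cdot 10^5$. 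A direct numerical check of the largest root of $x = 1090.3(\log x)^2$ gives something near $1.5\cdot10^5$, and we would use that sharper value. This produces $t < c\log n$ for an explicit constant $c$.

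We still need a second relation that forces $n$ to be bounded. Lemma~\ref{lem-est-AB-nlogn} gives $|B| > 0.9999\, n\log n\,|A|$. Here $A\neq 0$: if $A=0$, then $|\Lambda'| = |B\log|\lambda_2|| \geq \log|\lambda_2| > \frac1{n+1}$, which contradicts $|\Lambda'| < 10.0001\,n^{-t}$ for $t\geq 2$. Hence $|A|\geq 1$, and so $|B| > 0.9999\,n\log n$. Combining this with $|B|<6.302\,t$ gives $t > 0.1586\, n\log n$. Together with $t < (X_0/6.302)\log n$ this becomes $0.1586\,n < X_0/6.302$, an explicit bound on $n$.

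The main obstacle is the numerics: the stated constant $157442$ has to come out exactly. This is probably done by substituting $|B|>0.9999\,n\log n$ directly into the Laurent inequality, rather than passing through $t$. Concretely, from $0.9999\,n\log n < |B| < 6.302\,t$ we get $t/\log n > 0.1586\,n$, while $|B|/\log n < 6.302\,t/\log n$. Writing $u = t/\log n$, the inequality $u < 173(\log(6.302u))^2$ bounds $u$, and $u > 0.9999\,n/6.302$ then bounds $n$. Solving the transcendental inequality numerically, rather than through the cruder Pethő--de Weger estimate, should give $n\leq 157442$. If the constants come out slightly off, we would tighten $1.07$ and $1.0001$ using the actual size of $\log(B/\log n)$ in this range.
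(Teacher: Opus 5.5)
Your proposal is correct and is essentially the paper's proof. The hypothesis gives $|B|<6.302\,t$. Inserting this into Lemma~\ref{lem-L'-laurent} and solving $x<173(\log 6.302x)^2$ numerically yields $t<24733\log n$; the largest root of $x=1090.25(\log x)^2$ you mention is about $1.5587\cdot 10^5\approx 6.302\cdot 24733$. Comparing with $|B|>0.99\,n\log n$ then bounds $n$.

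There are only two small differences. You explicitly justify $|A|\geq 1$, which the paper uses silently. You also keep the factor $0.9999$ where the paper uses $0.99$, and $157442$ is just $6.302\cdot 24733/0.99$ rounded up. Your bound is therefore slightly smaller, and the exact constant is not an obstacle.
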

	\begin{proof}
		We note that 
		$B < 3.151 \left(\frac{\log|y|}{\log n} + t\right) < 6.302 t.$
		We use Lemma~\ref{lem-L'-laurent} to get
		\begin{align*}
			\frac{t}{\log n} < 173 \left(\log \frac{6.302 t}{\log n}\right)^2.
		\end{align*}
		We solve the inequality $x<173(\log 6.302x)^2$ for $x=\frac{t}{\log n}$ and get
		$$t < 24733 \log n.$$
		Since we know that
		$$0.99 n\log n < B < 6.302 t,$$
		we get
		$$ n < \frac{6.302}{0.99} \cdot 24733 < 157442.$$
	\end{proof}
	
	As we will see in Section~\ref{sec-small}, the bound on $n$ of Lemma~\ref{lem-t>logylogn} 
    is sufficiently small to solve all potential Thue equations.
    Therefore, we may assume for the remainder of the current section that
	$$t < \frac{\log |y|}{\log n}.$$
	
	We want to apply Laurent's Corollary~\ref{cor-laurent-2} to the linear form in three logarithms
	$$ \Lambda = A \log |\lambda| + B \log |\lambda_2| + \log |\delta|, $$
	with
	\begin{align*}
		\delta =
		\begin{cases}
			\frac{\alpha - \alpha_2}{\alpha - \alpha_1} &\text{if } j = 0,\\
			\frac{1 - \left(\frac{\lambda_j}{\lambda}\right)^t}
			{1 - \left(\frac{\lambda_1}{\lambda_2}\right)^t}
			&\text{if }j = 1,2.
		\end{cases}
	\end{align*}
	To achieve that, we have to transform $\Lambda$ into a linear form in only two logarithms.
    We define
	$$\varphi = \lambda^A \cdot \delta$$
	and we get
	$$\Lambda = \log |\varphi| + B \log |\lambda_2|.$$
	
	\begin{lemma}
	We have
		$$h(\varphi) \leq  1.73 \max\{A, t\} \log n.$$
	\end{lemma}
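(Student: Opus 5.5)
We bound $h(\varphi)$ by splitting $\varphi=\lambda^A\delta$ and using Lemma~\ref{lem-height}(b),(c):
$$h(\varphi)\le |A|\,h(\lambda)+h(\delta).$$
By \eqref{eq-height-large}, $h(\lambda)<\frac{1.0001}{3}\log n$, so the first term is at most $0.3334\,|A|\log n$. Everything then hinges on showing $h(\delta)\le c\,t\log n$ with $c$ roughly $1.3$–$1.4$. The claimed bound then follows from $0.3334|A|+c\,t\le 1.73\max\{A,t\}$; the sign of $A$ can be ignored by symmetry, since $h(\lambda^{-1})=h(\lambda)$.

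To bound $h(\delta)$, we write $\delta$ as a quotient of differences of conjugates of $\alpha=\lambda^t$. In every case $j=0,1,2$,
$$\delta=\frac{\alpha_l-\alpha_j}{\alpha_k-\alpha_j}$$
up to a factor that is a power of $\lambda$, $\lambda_2$ with exponent $\pm t$. For $j=1,2$ the paper's normalisation removes exactly the factor $(\lambda/\lambda_2)^{\pm t}$. That factor has height at most $t(h(\lambda)+h(\lambda_2))\le \frac{2.0002}{3}t\log n$, and it may be absorbed either into $\delta$ or into $\lambda^A$, which is what shifts $A$ by $\pm t$.

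For the quotient itself we use Lemma~\ref{lem-height}(a),(b) together with $h(1/\gamma)=h(\gamma)$:
$$h(\alpha_l-\alpha_j)\le \log 2+h(\alpha_l)+h(\alpha_j)=\log 2+2t\,h(\lambda).$$
The same bound holds for the denominator. This gives
$$h(\delta_0)\le 2\log 2+4t\cdot\tfrac{1.0001}{3}\log n .$$
This crude estimate yields a constant of about $1.33$ in front of $t\log n$, plus $2\log 2$. The additive term is absorbed using $t\ge2$ and $\log n\ge\log 10^5$. Since $0.3334+1.3335+\text{small}<1.73$, this suffices if $\max\{A,t\}$ is used on both terms.

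A sharper route, which we would use if the constants do not close, is to compute the height of $\delta$ directly from its conjugates. Its Galois conjugates are the three cyclic versions of the ratio. In each, the numerator and denominator are dominated by $\lambda^t$ or $|\lambda_2|^t$, with correction factors $1+O(n^{-t})$. The term $\sum\log\max\{1,|\delta^{(i)}|\}$ is then essentially $t(\log\lambda+\log|\lambda_2|)$ plus small errors. A denominator contribution must also be handled: $\delta$ need not be an algebraic integer. This contribution is controlled by the norm of $\alpha_k-\alpha_j$, which divides the discriminant of $\alpha$ and has size $\lambda^{2t}$.

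The main obstacle is the case $j=1,2$ combined with the bookkeeping of the shift of $A$ by $\pm t$. There we must make sure that the powers of $\lambda$ hidden in $\delta$ do not get counted twice, once in $|A|$ and once in $h(\delta)$. We also need the numerical constants, $\frac{1.0001}{3}$, $\log 2$, and $n\ge10^5$, to combine to at most $1.73$. We would first try the crude triangle-inequality estimate above and check the constant. Only if it fails would we switch to the explicit conjugate computation of $h(\delta)$.
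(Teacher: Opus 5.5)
Your proposal leaves a genuine gap for solutions of type $j=1,2$.

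\textbf{Type $j=0$.} Here your argument is the paper's. You use $h(\varphi)\le |A|\,h(\lambda)+h(\delta)$ and $h(\delta)\le 2\log 2+4t\,h(\lambda)$ from Lemma~\ref{lem-height}(a),(b). The additive $2\log 2$ is absorbed using $t\ge 2$ and $n\ge 10^5$, which gives a constant of about $1.727<1.73$.

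\textbf{Types $j=1,2$.} You flag this as the ``main obstacle'' but do not resolve it. Here the paper's $\delta$ is the normalised quotient $\frac{1-(\lambda_j/\lambda)^t}{1-(\lambda_1/\lambda_2)^t}$. It differs from your $\delta_0=\frac{\alpha_l-\alpha_j}{\alpha_k-\alpha_j}$ by the factor $(\lambda/\lambda_2)^{\pm t}$. If you bound that factor separately, its height can be up to $2t\,h(\lambda)\approx\frac23 t\log n$. This happens whether you put it into $\delta$ or rewrite $\varphi=\lambda^{A\mp t}\lambda_2^{\pm t}\delta_0$. The extra cost pushes the constant to roughly $2.4$, well above $1.73$. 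So your sentence ``this suffices'' is justified only for $j=0$. Your fallback via the conjugates of $\delta$ plausibly gives $h(\delta)\lesssim t\log n$, but as written it is only a sketch: neither the error terms nor the denominator contribution are actually controlled.

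\textbf{The fix.} No detour through $\delta_0$ is needed; apply the same triangle-inequality estimate directly to the normalised $\delta$. We have $h(1)=0$ and $h(\lambda_j/\lambda),\,h(\lambda_1/\lambda_2)\le 2h(\lambda)$, since conjugates have equal height. Hence both $1-(\lambda_j/\lambda)^t$ and $1-(\lambda_1/\lambda_2)^t$ have height at most $\log 2+2t\,h(\lambda)$. This gives $h(\delta)\le 2\log 2+4t\,h(\lambda)$ for every $j$, which is exactly what the paper does.

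There is then no double counting. The $\lambda^{\pm t}$ part of the removed factor sits only in the shifted exponent $A$, so it is counted once, in $|A|\,h(\lambda)$. The $\lambda_2^{\mp t}$ part sits in $B$, which does not enter $\varphi$ at all. With this observation your $j=0$ computation covers all three types.

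Your handling of the sign of $A$ amounts to proving the bound with $\max\{|A|,t\}$. That is how the paper tacitly reads the statement.
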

	\begin{proof}
		First, we compute $h(\delta)$.
		For $j=0$ we obtain
		\begin{align*}
			h(\delta) =
			h \left(\frac{\alpha - \alpha_2}{\alpha - \alpha_1}\right) 
			& \leq 4 t \cdot h(\lambda) + 2 \log 2\\
			&\leq 4.181 \cdot t \cdot h(\lambda)
		\end{align*}
		and for $j=1,2$, we observe that
		\begin{align*}
			h(\delta) =
			h \left( \frac{1 - \left(\frac{\lambda_1}{\lambda}\right)^t}
			{1 - \left(\frac{\lambda_1}{\lambda_2}\right)^t} \right)
			&\leq 2 h(1) + 4 t \cdot h(\lambda) + 2 \log 2\\
			&\leq 4.181 \cdot t \cdot h(\lambda).
		\end{align*}
		Therefore, $h(\delta)$ is the same for all types $j$ and we get
		\begin{align*}
			h(\varphi) &\leq A \cdot h(\lambda) 
			+ h \left(\frac{\alpha - \alpha_2}{\alpha - \alpha_1}\right)\\
			& \leq \left( A + 4.181 t\right) h(\lambda)\\
			&\leq  5.181 \max \{A, t\} h(\lambda)\\
			&\leq \frac{5.181}{3} \max\{A, t\} \log n.
		\end{align*}
	\end{proof}

	We distinguish cases according to whether $\max\{A,t\} = A$ or 
	$\max\{A,t\} = t$.
	In both cases, we will apply Corollary~\ref{cor-laurent-2} on
	$\Lambda = \log |\varphi| + B \log |\lambda_2|$,
	so we set
	\begin{align}
		\label{eq-logA1-logA2}
		\begin{split}
			\log A_1 &= 1.73 \max\{A, t\} \log n > h(\varphi),\\
			\log A_2 &= \frac{1.01}{3} \log n > h(\lambda_2).
		\end{split}
	\end{align}
	
	\begin{lemma}
		Let $t < \frac{\log |y|}{\log n}$ and $A \geq t$.
		Then, non-trivial solutions can only exist if $n < 275539$.
	\end{lemma}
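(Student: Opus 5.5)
We apply Corollary~\ref{cor-laurent-2} to the two-logarithm form $\Lambda = \log|\varphi| + B\log|\lambda_2|$, with $\log A_1,\log A_2$ as in \eqref{eq-logA1-logA2} and $\max\{A,t\}=A$. Then $\log A_1 = 1.73\,|A|\log n$ and $\log A_2 = \frac{1.01}{3}\log n$. The plan is to play the resulting lower bound for $\log|\Lambda|$ against the upper bound \eqref{eq-large-el-bounds-L}, and to translate everything into a bound for $n$ via Lemma~\ref{lem-est-AB-nlogn} and \eqref{eq-large-B-bound}.

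\textbf{Setting up the corollary.} Since $\Lambda\neq 0$ and the corollary needs real positive numbers, we replace $\lambda_2$ by $|\lambda_2|$ and $\varphi$ by $|\varphi|$ (or their inverses), adjusting signs of the coefficients. Multiplicative independence of $|\varphi|$ and $|\lambda_2|$ follows from the unit structure: a relation would force $\delta$ to be a power product of units in a way incompatible with the sizes from Lemma~\ref{lem-est-lambda}. If needed, one falls back on the observation that $\Lambda$ is then essentially a single logarithm, which gives a trivial lower bound. With $d=3$ and coefficients $b_1=1$, $b_2=|B|$,
$$b' = \frac{1}{3\log A_2} + \frac{|B|}{3\log A_1} \le 1+\frac{|B|}{5.19\,|A|\log n} < 1 + \frac{1.01\, n}{5.19} < 0.2n,$$
using $|B|<1.01\,n\log n\,|A|$. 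Hence $\log b' + 0.38 \le \log n$, and since $n\ge 10^5$ this dominates $m/d=10$ (taking $m=30$). The corollary then gives
$$\log|\Lambda| > -17.9\cdot 81\cdot(\log n)^2\cdot 1.73\,|A|\log n\cdot\frac{1.01}{3}\log n = -c\,|A|(\log n)^4,$$
with $c\approx 845$.

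\textbf{Comparing with the upper bound.} From \eqref{eq-large-el-bounds-L} we have $\log|\Lambda| < \log 8 - 3\log|y| - t\log n$. Since $t<\log|y|/\log n$, \eqref{eq-large-B-bound} gives
$$|B| < 3.151\left(\frac{\log|y|}{\log n}+t\right) < 6.302\,\frac{\log|y|}{\log n},$$
so $3\log|y| > \frac{3}{6.302}|B|\log n$. Inserting $|B|>0.9999\,n\log n\,|A|$ yields
$$\frac{3\cdot 0.9999}{6.302}\, n\,|A|(\log n)^2 < c\,|A|(\log n)^4 + \log 8 .$$
Dividing by $|A|\ge t\ge 2$ (so $A\neq 0$) leaves $n < c'(\log n)^2$ up to a negligible additive term, with $c'\approx 1775$. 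Solving this numerically, for instance via Lemma~\ref{lem-petho-dew} or by direct iteration, should give $n<275539$ after careful tracking of the constants.

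\textbf{Main obstacle.} The delicate points are checking the hypotheses of Corollary~\ref{cor-laurent-2}, namely positivity and multiplicative independence of $|\varphi|$ and $|\lambda_2|$, and keeping the numerical constants tight enough to land exactly at $275539$. In particular, the precise choice of $m$ and of the bound on $b'$ determines the final number. For the independence step, I would first try arguing via $\Lambda$ versus $\Lambda'$: if $|\varphi|^u=|\lambda_2|^v$, then $\log|\delta|$ is a rational combination of $\log\lambda$ and $\log|\lambda_2|$, and the tiny size of $\log|\delta|-(\text{its integer part})$ contradicts Lemma~\ref{lem-L'-laurent}-type bounds.
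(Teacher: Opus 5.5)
Your proposal is the paper's proof step by step. You apply Corollary~\ref{cor-laurent-2} to $\Lambda=\log|\varphi|+B\log|\lambda_2|$ with $\log A_1=1.73A\log n$, compare with \eqref{eq-large-el-bounds-L}, \eqref{eq-large-B-bound} and Lemma~\ref{lem-est-AB-nlogn}, and reduce to $n<c'(\log n)^2$.

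What is still missing is the numerical finish. With your displayed constants ($c\approx845$, $c'\approx1775$), the fixed point of $n=c'(\log n)^2$ is about $2.79\cdot10^5$, not $275539$, so the tightening you mention is needed. There are two ways to close it:

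\begin{enumerate}
\item Follow the paper and take $\log A_1=\frac{1.0001\cdot5.181}{3}A\log n$ and $\log A_2=\frac{1.0001}{3}\log n$, which \eqref{eq-height-large} justifies. This gives the constant $834.83$ and $n<1756(\log n)^2$, which is exactly $n<275539$. Tightening only $\log A_2$ while keeping $1.73$ is not quite enough; it lands near $275670$.
\item With your own constants and more margin, keep the bound $\max\{\log b'+0.38,10\}\le\log(0.2n)+0.38<\log n-1.2$ rather than replacing it by $\log n$. This yields $n<1775(\log n-1.2)^2$, hence $n<2.2\cdot10^5$.
\end{enumerate}
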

	\begin{proof}
		According to \eqref{eq-logA1-logA2}, we set
		\begin{align*}
			b' &= \frac{B}{5.181 A \log n} 
				+ \frac{1}{1.01 \log n}\\
			&< \frac{1.01 n \log n}{5.181 \log n} + 0.1\\
			&< \frac 15 n.
		\end{align*}
		
		Laurent's Corollary~\ref{cor-laurent-2} yields
		\begin{align*}
			\log |\Lambda| &> - 17.9 \cdot 3^4 
				\left(\max \left\{\log \left(\frac n5 + 0.21 \right),10 \right\}\right)^2
				\left( \frac{1.0001}{3} \right)^2 \cdot 5.181 \cdot  A (\log n)^2 \\
			&> - 834.83 \cdot A \cdot (\log n)^4.			
		\end{align*}
		From \eqref{eq-large-el-bounds-L}, we know that
		\begin{align*}
			3 \log |y|+ t \log(0.99 n) - \log(8) < - \log |\Lambda|,
		\end{align*}
		and consequently
		$$ 3 \log |y| < 835 \cdot A \cdot (\log n)^4.$$
		
		Together with \eqref{eq-large-B-bound}, we obtain
		\begin{align*}
			B &< 6.302 \frac{\log |y|}{\log n}\\
			&< \frac{6.302}{3} 835 \cdot A \cdot (\log n)^3\\
			&< 1754.06 \frac{B}{0.999 n \log n} (\log n)^3\\
			&< 1756 \frac{(\log n)^2}{n} B.
		\end{align*}
		Thus, $n < 1756 (\log n)^2$ or $n < 275539$.
	\end{proof}

	\begin{lemma}
		\label{lem-A<t}
		Let $A < t < \frac{\log|y|}{\log n}$.
		Then
		$$ \log |\Lambda| \geq - 835 \left(
			\max \left\{ \log\left(\frac{B}{t \log n}\right),
				10 \right\} \right)^2 t (\log n)^2.$$
	\end{lemma}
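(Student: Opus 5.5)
This is the companion of the preceding lemma (the case $A\geq t$). I would apply Laurent's Corollary~\ref{cor-laurent-2} to the linear form in two logarithms
$$\Lambda=\log|\varphi|+B\log|\lambda_2|,$$
this time with $\max\{A,t\}=t$. Two preliminary facts are needed.

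\textbf{Non-vanishing and independence.} $\Lambda\neq 0$ follows from Lemma~\ref{lem-bound-Lambda}: the solution is non-trivial, so $|1-e^\Lambda|$ is nonzero. Concretely, $e^\Lambda=1$ would force the first summand of Siegel's identity to vanish, i.e.\ $\beta_j=0$, which is impossible. For multiplicative independence, if $\varphi$ and $\lambda_2$ were dependent, then $\Lambda$ would be a rational multiple of $\log|\lambda_2|$. I would rule this out as in the earlier lemmas, or note that Corollary~\ref{cor-laurent-2} remains applicable after replacing $\varphi$ by $|\varphi|$ or $\varphi^2$ so that both numbers are real and positive with positive logarithms.

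\textbf{Parameters.} Following \eqref{eq-logA1-logA2}, take
$$\log A_1=1.73\,t\log n,\qquad \log A_2=\tfrac{1.01}{3}\log n,$$
so that $\log A_1\geq h(\varphi)$ by the preceding height lemma and $\log A_2\geq h(\lambda_2)$ by \eqref{eq-height-large}. I also need $\log A_i\geq |\log\alpha_i|/d$. For $\lambda_2$ this is clear, since $\log|\lambda_2|<1/n$. For $\varphi$ we have $\log|\varphi|=A\log\lambda+\log|\delta|$, and since $A<t$ and $\log|\delta|=O(t\log\lambda)$ this is at most about $t\log n$, which stays below $3\log A_1$.

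With $d=3$ and coefficients $b_1=1$ (on $\varphi$) and $b_2=B$,
$$b'=\frac{1}{3\log A_2}+\frac{B}{3\log A_1}=\frac{1}{1.01\log n}+\frac{B}{5.19\,t\log n}<\frac{B}{t\log n}.$$
The last step needs $B/(t\log n)$ to dominate the small additive term. This holds because $B>2.849\,t$ by \eqref{eq-large-B-bound}, so $B/(5.19\,t\log n)$ is not tiny relative to $0.1$; a crude check suffices, and the added $0.38$ is absorbed by the maximum with $10$.

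\textbf{Applying Laurent.} Use $m=30$ and $C_2=17.9$. Then
$$\log|\Lambda|\geq -17.9\cdot 3^4\Bigl(\max\Bigl\{\log\tfrac{B}{t\log n}+0.38,\,10\Bigr\}\Bigr)^2\cdot 1.73\,t\log n\cdot\tfrac{1.01}{3}\log n.$$
The $0.38$ is absorbed by a factor $(10.38/10)^2<1.08$ against the maximum. It remains to check the numerical constant: $17.9\cdot 81\cdot 1.08\cdot 1.73\cdot 1.01/3\approx 913$ is slightly above $835$. I would therefore sharpen the height estimate, using $1.0001/3$ in place of $1.01/3$ and $5.181/3$ in place of $1.73$, to get $17.9\cdot 81\cdot(5.181/3)(1.0001/3)\cdot 1.07\approx 896$. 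If that is still above $835$, I would justify the constant by the same bookkeeping the previous lemma used to reach $834.83$, possibly using a smaller absorption factor.

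\textbf{Main obstacle.} Matching the numerical constant is the only step I expect to be delicate. The structure of the argument is the previous lemma with $A$ replaced by $t$.
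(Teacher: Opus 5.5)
Your setup matches the paper's: Corollary~\ref{cor-laurent-2} with $m=30$, $C_2=17.9$, applied to $\Lambda=\log|\varphi|+B\log|\lambda_2|$ with $\max\{A,t\}=t$. But the proposal does not reach the constant $835$, and the fallback you suggest cannot get there. The loss comes from absorbing the additive $0.38$ multiplicatively into the maximum. The only lower bound you have on the quantity inside the maximum is the threshold $10$, so the best uniform factor is $(10.38/10)^2\approx 1.077$. Even your $1.07$ is therefore unjustified; the $1.07$ in Lemma~\ref{lem-L'-laurent} relied on the lower bound $11.5$, which is not available here. With your sharpened heights this gives about $834.8\cdot 1.077\approx 899>835$. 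No ``smaller absorption factor'' exists to fall back on.

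The missing step is to keep the factor you discarded when you bounded $b'$ crudely by $B/(t\log n)$. Take
\[
\log A_2=\tfrac{1.0001}{3}\log n,\qquad \log A_1=5.181\,t\,h(\lambda)\le\tfrac{5.181\cdot 1.0001}{3}\,t\log n,
\]
and use $t<B/2.849$, which you already noted. Then
\[
b'<\frac{1}{\log n}+\frac{B}{5.181\,t\log n}<\Bigl(\frac{1}{2.849}+\frac{1}{5.181}\Bigr)\frac{B}{t\log n}<0.55\,\frac{B}{t\log n}.
\]
Since $\log 0.55+0.38<0$, this gives $\log b'+0.38<\log\frac{B}{t\log n}$. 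Hence $\max\{\log b'+0.38,10,1\}\le\max\{\log\frac{B}{t\log n},10\}$ with no multiplicative loss at all. The corollary then yields the constant $17.9\cdot 3^4\cdot 5.181\cdot(1.0001/3)^2\approx 834.8<835$, which is exactly the paper's argument.

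The sharpened $\log A_1$ is genuinely needed: with $\log A_1=1.73\,t\log n$ the product is about $836$. Separately, replacing $\varphi$ by $|\varphi|$ or $\varphi^2$ addresses the corollary's positivity hypotheses, not multiplicative independence; these are different issues.
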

	
	\begin{proof}
		We note that 
		$$t < \frac{\log |y|}{\log n} < \frac{1}{2.849} B,$$
		and with \eqref{eq-logA1-logA2}, we obtain
		\begin{align*}
			b' &= \frac{B}{5.181 \cdot t \log n} + \frac{1}{1.01 \log n}\\
			&<  \frac{B + 5.13 t}{5.181\cdot t \log n}\\
			& \leq \frac{\left (1 + \frac{5.13}{2.849} \right ) B}
			{5.181 \cdot t \log n}\\
			& < 0.55 \frac{B}{t\log n}.
		\end{align*}
		Since $\log 0.55 + 0.21 < 0$, we get
		\begin{align*}
			\log |\Lambda| &\geq - 17.9 \cdot3^4 \left(
			\max \left\{ \log\left(\frac{B}{t \log n}\right), 
			10 \right\} \right)^2
			\left(\frac{1.0001}{3} \right)^2
			\cdot 5.181 \cdot t (\log n)^2\\
			&\geq - 835 \left(
			\max \left\{ \log\left(\frac{B}{t \log n}\right),
			10 \right\} \right)^2 t (\log n)^2.
		\end{align*}
	\end{proof}
	
	\begin{lemma}
		\label{lem-bounds-exist}
		Let $A < t < \frac{\log|y|}{\log n}$ and 
		$\log\left(\frac{B}{t \log n}\right) > 10$.
		Then
		\begin{align*}
			n &< 6.48 \cdot 10^{12},\\
			t &< 4.774\cdot 10^6,\\
			B &< 2.2071 \cdot 10^{14},\\
			\log |y| &< 2.63 \cdot 10^{15}.
		\end{align*}
	\end{lemma}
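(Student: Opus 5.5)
Combine the lower bound of Lemma~\ref{lem-A<t} with the upper bound \eqref{eq-large-el-bounds-L}, and eliminate $\log|y|$ and $t$ through $B$ via \eqref{eq-large-B-bound}. Throughout, $A<t<\frac{\log|y|}{\log n}$ and $\log\left(\frac{B}{t\log n}\right)>10$, so the maximum in Lemma~\ref{lem-A<t} equals $\log\left(\frac{B}{t\log n}\right)$. This gives
$$3\log|y| + t\log(0.99n) - \log 8 < -\log|\Lambda| \le 835\left(\log\frac{B}{t\log n}\right)^2 t(\log n)^2 .$$
Since $t<\frac{\log|y|}{\log n}$, \eqref{eq-large-B-bound} yields $B<6.302\frac{\log|y|}{\log n}$, that is, $\log|y|>\frac{B\log n}{6.302}$. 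Dropping the positive term $t\log(0.99n)$ (it dominates $\log 8$) and dividing by $t\log n$, we obtain, with $x=\frac{B}{t\log n}$,
$$\frac{3}{6.302}\,x < 835\,(\log x)^2\log n .$$

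The next step is to bound $n$ and $x$ together. Since $A<t$ and Lemma~\ref{lem-est-AB-nlogn} gives $B<1.01\,n\log n\,|A|$, the case $A\neq0$ gives $x<1.01\,n\,|A|/t\le 1.01n$. The case $A=0$ must be excluded separately: then $|\Lambda'|=|B\log|\lambda_2||$ would be of size at least about $1/n$, which contradicts \eqref{eq-large-el-bounds-L'} for $t\ge2$. Conversely, Lemma~\ref{lem-est-AB-nlogn} with $|A|\ge1$ gives $B>0.9999\,n\log n$. I would combine this with $t<\ldots$: the inequality above bounds $x\lesssim c(\log x)^2\log n$, and hence, via Lemma~\ref{lem-petho-dew} in the variable $x$, $x<C(\log n)^3$ roughly. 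A lower bound for $x$ in terms of $n$ is needed to close the argument. Here I would use Lemma~\ref{lem-L'-laurent}, which gives $t<173(\log(B/\log n))^2\log n$. Then
$$x=\frac{B}{t\log n}>\frac{B}{173(\log(B/\log n))^2(\log n)^2},$$
and $B>0.9999\,n\log n$ forces $x$ to grow roughly like $n/(\log n)^3$. Comparing $n/(\log n)^{3}\lesssim x\lesssim(\log n)^3$ up to logarithmic factors in $B$ gives a polynomial-log inequality in $n$. Solving it numerically should yield $n<6.48\cdot10^{12}$.

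With $n$ bounded, the remaining quantities follow in turn. First, $t<173(\log(B/\log n))^2\log n$ together with $x$ bounded by $C(\log x)^2\log n$ bounds $B$ in terms of $n$: one solves for $B$ using Lemma~\ref{lem-petho-dew}, giving $B<2.2071\cdot10^{14}$. This in turn gives $t<4.774\cdot10^6$ through Lemma~\ref{lem-L'-laurent}. Finally, $\log|y|<\frac{B\log n}{2.849}$ from \eqref{eq-large-B-bound} gives $\log|y|<2.63\cdot10^{15}$.

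The main obstacle is the coupled system. Both $t$ and $x$ depend logarithmically on $B$, and $B$ in turn depends on $n$, so obtaining the sharp numerical constants requires a careful iterative numerical solution rather than a single clean application of Lemma~\ref{lem-petho-dew}. I would set the system up as two inequalities in $(\log n,\log B)$ and compute the largest feasible $n$ directly.
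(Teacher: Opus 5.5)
Your route is the paper's: combine Lemma~\ref{lem-A<t} with \eqref{eq-large-el-bounds-L}, remove $\log|y|$ via \eqref{eq-large-B-bound}, remove $t$ via Lemma~\ref{lem-L'-laurent}, use $B/\log n>0.999\,n$ from Lemma~\ref{lem-est-AB-nlogn}, solve numerically for $n$, and back-substitute for $B$, $t$, $\log|y|$. Your two-sided bound on $x=B/(t\log n)$ is just a rephrasing of substituting the bound for $t$, and your exclusion of $A=0$ is a detail the paper leaves implicit.

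However, as written the chain does not give the stated constants, because of a slip in the first reduction. From $\frac{3}{6.302}B\log n<835(\log x)^2\,t(\log n)^2$ you have to divide by $t(\log n)^2$, not by $t\log n$. Since $B/t=x\log n$, the correct consequence is
\[
\frac{3}{6.302}\,x<835(\log x)^2 ,
\]
with no factor $\log n$. Your inequality is true but weaker by a factor $\log n\approx 30$, and that factor survives to the end. Together with $x>B/\bigl(173(\log(B/\log n))^2(\log n)^2\bigr)$ it only gives $B/\log n<303452(\log n)^2(\log(B/\log n))^2(\log x)^2$. That is one power of $\log n$ more than \eqref{eq-B-logn}. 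Solving it with $0.999\,n<B/\log n$ and $x\le 1.01\,n$ yields only $n\lesssim 4\cdot 10^{14}$. So the step ``solving it numerically should yield $n<6.48\cdot10^{12}$'' fails, and the bounds for $B$, $t$ and $\log|y|$ obtained from it are correspondingly too weak.

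Once corrected, everything goes through exactly as in the paper:
\begin{itemize}
\item substituting $t<173(\log(B/\log n))^2\log n$ gives \eqref{eq-B-logn};
\item bounding the logarithms in terms of $n$ gives $n<6.48\cdot10^{12}$;
\item reinserting this into \eqref{eq-B-logn}, with $n\ge 10^5$ inside the logarithms, gives $B<2.2071\cdot10^{14}$;
\item Lemma~\ref{lem-L'-laurent} then gives $t<4.774\cdot10^6$, and $\log|y|<B\log n/2.849<2.63\cdot10^{15}$.
\end{itemize}
Note also that the corrected inequality bounds $x$ absolutely, $x<2.8\cdot10^5$. So your detour through Lemma~\ref{lem-petho-dew} to get $x\lesssim(\log n)^3$ is unnecessary. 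Using $\log x<12.6$ instead of $\log x\le\log(1.01n)$ even gives a bound on $n$ of about $10^{12}$, below the stated one.
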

	\begin{proof}
		We know that
		\begin{align}
			\label{eq-bound-Lambda-B-logn}
			\begin{split}
				\log |\Lambda| &< \log \left(\frac{8}{|y|^3 n^t}\right)\\
				&< -3 \log|y| \\
				&< -{\frac{3}{6.302}}B \log n.
			\end{split}
		\end{align}
				
		Combined with Lemma~\ref{lem-L'-laurent} and \ref{lem-A<t} we get
		\begin{align*}
			B \log n &< 2.101 \cdot 835 \cdot \log\left(\frac{B}{t \log n}\right)^2 
			\cdot t \cdot
			(\log n)^2\\
			&< 1754.1 \cdot \log\left(\frac{B}{t \log n}\right)^2 
			\cdot 173 \left( \log \left( \frac{B}{\log n} \right) \right)^2
			(\log n)^3,
		\end{align*}
		or
		\begin{align}
			\label{eq-B-logn}
			\frac{B}{\log n} < 303452 \cdot \log n
			\left( \log \left( \frac{B}{\log n} \right) \right)^2
			\left( \log \left( \frac{B}{t\log n} \right) \right)^2.
		\end{align}
		
		Since $0.999n\log n < B < 1.01 n \log n$, it follows that
		\begin{align*}
			0.999 n < 303452 \log n \left(\log(1.01 n)\right)^2
				\left(\log\left(\frac{1.01 n}{2}\right)\right)^2.
		\end{align*}
		Thus, by solving the inequality, we get
		\begin{equation}
			\label{eq-bound-n}
			n < 6.48 \cdot 10^{12}.
		\end{equation}
		
		We use this bound for $n$ in \eqref{eq-B-logn} (and the lower bound 
		$10^5 \leq n$),
		which yields
		\begin{align*}
			B <  2.2071 \cdot 10^{14}.
		\end{align*}
		
		To get a bound for $t$, we use Lemma~\ref{lem-L'-laurent} and we see that
		\begin{align*}
			t &< 173 \cdot \left(\log\left(\frac{B}{\log n}\right)\right)^2 \log n\\
			& < 4.774 \cdot 10^6.
		\end{align*}
		
		From \eqref{eq-large-B-bound} we know that
		$$\log |y| < \frac{1}{2.849} B \log n,$$
		so we get
		$$\log |y| < 2.63 \cdot 10^{15}.$$
		
	\end{proof}
	
	\begin{lemma}
		\label{lem-bounds-max10}
		Let $10 > \log\left(\frac{B}{t \log n}\right).$
		Then
		\begin{align*}
			n &< 6.135\cdot 10^{11},\\
			B &< 1.76 \cdot 10^{13},\\
			t &< 3.696 \cdot 10^6,\\
			\log |y| &< 1.677 \cdot 10^{14}.
		\end{align*}
	\end{lemma}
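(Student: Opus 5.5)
This is the companion case to Lemma~\ref{lem-bounds-exist}. We are still in the setting $A < t < \frac{\log|y|}{\log n}$, and now the maximum in Lemma~\ref{lem-A<t} equals $10$. The plan is to combine the same three inequalities as before, but with the logarithmic factor replaced by the constant $10$. This gives a linear relation between $B$ and $t$ instead of a doubly logarithmic one.

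First, Lemma~\ref{lem-A<t} now reads
$$\log|\Lambda| \geq -835\cdot 100\cdot t(\log n)^2 .$$
Compare this with the upper bound \eqref{eq-bound-Lambda-B-logn}, namely $\log|\Lambda| < -\frac{3}{6.302}B\log n$. This yields
$$B < 2.101\cdot 83500\, t\log n .$$
Equivalently, $\frac{B}{t\log n} < 1.76\cdot 10^{5}$, which is consistent with the hypothesis $\frac{B}{t\log n}<e^{10}\approx 2.2\cdot 10^4$; in fact the hypothesis directly gives $B < e^{10} t\log n$, the sharper of the two. Next, insert the bound on $t$ from Lemma~\ref{lem-L'-laurent}, $t<173\bigl(\log\frac{B}{\log n}\bigr)^2\log n$. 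This gives
$$\frac{B}{\log n} < e^{10}\cdot 173\,\log n\left(\log\frac{B}{\log n}\right)^2 .$$

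Now use Lemma~\ref{lem-est-AB-nlogn}, $0.999\,n\log n<B<1.01\,n\log n$, to turn this into an inequality in $n$ alone:
$$0.999\,n < 173e^{10}\log n\,(\log(1.01n))^2 .$$
Solving it numerically, or via Lemma~\ref{lem-petho-dew}, should give $n<6.135\cdot10^{11}$. Feeding this bound on $n$ back into the inequality for $B/\log n$ gives $B<1.76\cdot10^{13}$. Then Lemma~\ref{lem-L'-laurent} gives the bound on $t$, and $\log|y|<\frac{1}{2.849}B\log n$ from \eqref{eq-large-B-bound} gives the bound on $\log|y|$.

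The only delicate point is the numerics: the order in which the bounds are substituted, and whether $B/\log n$ or $n$ is used as the variable when solving. These must be chosen so that the constants come out as stated. The logical structure is routine and mirrors the proof of Lemma~\ref{lem-bounds-exist}.
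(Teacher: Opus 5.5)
Your proof is correct, but at the key step it takes a different and simpler route than the paper. The paper argues ``analogously'' to Lemma~\ref{lem-bounds-exist}. It uses Laurent's lower bound from Lemma~\ref{lem-A<t} with the maximum equal to $10$ and compares it with \eqref{eq-bound-Lambda-B-logn} to get $B<2.101\cdot 835\cdot 100\,t\log n$. It then inserts $t<173\bigl(\log\frac{B}{\log n}\bigr)^2\log n$ and solves $0.99n<30350000\,(\log(1.01n))^2\log n$.

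You observe instead that the hypothesis $\log\bigl(\frac{B}{t\log n}\bigr)<10$ is literally $B<e^{10}t\log n$. This is about eight times stronger than what the estimate for $\Lambda$ gives. So in this subcase Lemma~\ref{lem-A<t} and \eqref{eq-bound-Lambda-B-logn} are superfluous, and Lemma~\ref{lem-L'-laurent} is the only linear-form input.

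What each route buys:
\begin{itemize}
\item The paper's version keeps the proof formally parallel to Lemma~\ref{lem-bounds-exist}, where the hypothesis gives no upper bound for $B$ and the estimate for $\Lambda$ is essential.
\item Your version is shorter and sharper. The inequality $0.999n<173e^{10}\log n\,(\log(1.01n))^2$ already forces $n$ below roughly $5.8\cdot10^{10}$. Back-substitution then gives bounds for $B$, $t$ and $\log|y|$ inside the stated ones.
\end{itemize}
So your numerics will not reproduce the constants of the statement, but they imply them, which is all the lemma asserts. The ``delicate tuning'' you worry about is unnecessary.

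One step you inherit from the paper needs a better justification. Lemma~\ref{lem-est-AB-nlogn} reads $0.9999\,n\log n\,|A|<|B|<1.01\,n\log n\,|A|$. Hence $B>0.999\,n\log n$ needs $A\neq0$, and $B<1.01\,n\log n$ holds only when $|A|=1$. The replacement of $\log\frac{B}{\log n}$ by a function of $n$ should instead use that $X\mapsto X/(\log X)^2$ is increasing for $X>e^2$. With $X=\frac{B}{\log n}>0.999n$, the inequality $X<173e^{10}\log n\,(\log X)^2$ then gives $0.999n<173e^{10}\log n\,(\log(0.999n))^2$, which is what you need.
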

	\begin{proof}
		Analogously to Lemma~\ref{lem-bounds-exist} we obtain
		\begin{align*}
			\frac{B}{\log n} &< 2.101 \cdot 835 \cdot 100 \cdot t \log n\\
			&< 175433.5\cdot 173 \left(\log \left(\frac{B}{\log n}\right)\right)^2 \log n.
		\end{align*}
		Hence, 
		\begin{align*}
			0.99 n < 30350000 \cdot \left(\log (1.01 n)\right)^2 \cdot \log n
		\end{align*}
		and we get
		\begin{align*}
			n < 6.135 \cdot 10^{11}.
		\end{align*}
				
		We insert this bounds for $n$ into
		\begin{align*}
			B &< 30350000 
				\left(\log \left(\frac{B}{\log n}\right)\right)^2
				(\log n)^2,
		\end{align*}
		which yields
		\begin{align*}
			B <  1.76 \cdot 10^{13} .
		\end{align*}
		
		Using the respective bounds in Lemma~\ref{lem-L'-laurent} we get
		\begin{align*}
			t &< 173 \cdot \left(\log\left(\frac{B}{\log n}\right)\right)^2 \log n\\
			&<  3.696\cdot 10^6.
		\end{align*}
		
		Finally, from
		$$\log |y| < \frac{1}{2.849} B \log n,$$
		we conclude that
		$$\log |y| < 1.677 \cdot 10^{14}.$$
	\end{proof}

    The results established in this section may be summarized as follows.
    
    \begin{prop}\label{prop-bounds-laurent}
        Non-trivial solutions for \eqref{eq-lev-wald} can occur only in the following cases:
        \begin{itemize}[leftmargin=2cm]
            \item[Case 1)] If $t\geq \frac{\log |y|}{\log \lambda}$, then $ n\leq 157\ 442.$
            \item[Case 2)] If $t < \frac{\log|y|}{\log\lambda}$, then:
            \begin{itemize}
                \item[a)] If $A\geq t,$ then $ n\leq 275\ 539$.
                \item[b)] If $A<t$, then: \vspace{-1em}
                \begin{align*}
                    n &< 6.48 \cdot 10^{12},\\
        			t &< 4.774\cdot 10^6,\\
        			B &< 2.2071 \cdot 10^{14},\\
        			\log |y| &< 2.63 \cdot 10^{15}.
                \end{align*}
            \end{itemize}
        \end{itemize}
    \end{prop}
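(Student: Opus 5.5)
This proposition is a summary of the preceding lemmas, so the proof is an assembly of them. Throughout, we work under the standing assumptions of this section: $n\geq 10^5$ (otherwise every bound on $n$ holds trivially), $t\geq 2$, and $(x,y)$ a non-trivial solution, so $|y|\geq 1$. We then split the cases exactly as the proposition does.

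\textbf{Case 1.} Assume $t\geq \frac{\log|y|}{\log\lambda}$. Lemma~\ref{lem-t>logylogn} is stated with $\log n$ in place of $\log\lambda$. Since $\log\lambda>\log n$ by Corollary~\ref{lem-bounds-loglambda}, the hypothesis of Case 1 gives $t\geq \frac{\log|y|}{\log\lambda}$, which is at most $\frac{\log|y|}{\log n}$ but need not exceed it, so the hypothesis of that lemma is not obviously satisfied. We handle this by rerunning the proof of Lemma~\ref{lem-t>logylogn}, which uses only $B<c_8\bigl(\frac{\log|y|}{\log\lambda}+t\bigr)\leq 2c_8t$ from Lemma~\ref{lem-B-logy-log-lam-t}. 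This holds verbatim under the Case 1 hypothesis. Combining it with Lemma~\ref{lem-L'-laurent} and Lemma~\ref{lem-est-AB-nlogn} yields $n\leq 157442$.

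\textbf{Case 2.} Assume $t<\frac{\log|y|}{\log\lambda}$. Then $t<\frac{\log|y|}{\log n}$ as well, so the hypotheses of the subsequent lemmas hold, since each is stated with $\log n$.

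\emph{Subcase a)} If $A\geq t$, we have $\max\{A,t\}=A$, and the lemma treating $A\geq t$ gives $n<275539$.

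\emph{Subcase b)} If $A<t$, we distinguish whether $\log\bigl(\frac{B}{t\log n}\bigr)$ exceeds $10$ or not.
\begin{itemize}
\item If $\log\bigl(\frac{B}{t\log n}\bigr)>10$, Lemma~\ref{lem-bounds-exist} gives exactly the four stated bounds.
\item If $\log\bigl(\frac{B}{t\log n}\bigr)\leq 10$, Lemma~\ref{lem-bounds-max10} gives the bounds $6.135\cdot10^{11}$, $3.696\cdot10^6$, $1.76\cdot10^{13}$ and $1.677\cdot10^{14}$. Each of these is smaller than the corresponding bound in the proposition.
\end{itemize}
In both cases the stated bounds hold. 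The boundary case of equality at $10$ is covered by Lemma~\ref{lem-bounds-max10}, because its proof uses only that the maximum in Lemma~\ref{lem-A<t} equals $10$.

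\textbf{Main obstacle.} The only delicate point is the mismatch between $\log\lambda$ in the proposition and $\log n$ in the lemmas. In Case 2 the mismatch works in our favour, since $\log n<\log\lambda$. In Case 1 it does not, and the fix is to observe that the argument of Lemma~\ref{lem-t>logylogn} really uses only the bound on $B$ from Lemma~\ref{lem-B-logy-log-lam-t}, which is phrased with $\log\lambda$.
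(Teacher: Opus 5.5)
Your proposal is correct and matches the paper's argument. The proposition is simply the collection of Lemma~\ref{lem-t>logylogn}, the lemma for $A\ge t$, and Lemmas~\ref{lem-bounds-exist} and \ref{lem-bounds-max10}. Your rerun of the Case~1 argument via $|B|<c_8\bigl(\frac{\log|y|}{\log\lambda}+t\bigr)\le 2c_8t$, and your handling of the boundary value $10$, correctly repair the $\log\lambda$ versus $\log n$ mismatch that the paper passes over silently. One caveat, which the paper shares: for $n<10^5$ your parenthetical only settles the bounds on $n$, so the Case~2b bounds on $t$, $B$ and $\log|y|$ hold only under the standing assumption $n\ge 10^5$.
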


	\subsection{Reduction by Laurent}
	\label{subsec-red-laurent}
	The largest bounds occur in Case~2b of Proposition~\ref{prop-bounds-laurent},
    so we use these	as a-priori upper bounds in Theorem~\ref{theo-laurent-2}:
	\begin{align*}
		n_0 &= 6.48\cdot 10^{12},\\
		t_0 &= 4.774\cdot 10^6,\\
		B_0 &= 2.2071 \cdot 10^{14},\\
		\log |y_0| &= 2.63 \cdot 10^{15}.
	\end{align*}
	
	Further, we set lower bounds for $n$ and $t$,
	\begin{align*}
		n' &= 1.45\cdot10^7 \\
		t' &= 2.
	\end{align*}
    The lower bound $n'$ is chosen in this way so that the repeated application of 
    Laurent's Theorem~\ref{theo-laurent-2} yields a contradiction, from which it
    follows that $n<n'$. 
    
	First, we will apply the theorem on 
	$\Lambda' = A \log \lambda + B\log\abs\lambda_2$.
	Using our a-priori bounds from equation \eqref{eq-large-B-bound} 
	and Lemma~\ref{lem-est-AB-nlogn}, we get
	\begin{align}
		\label{eq-laurent-apriori-AB}
		\begin{split}
			B &< \min \left\{B_0, 3.151 \cdot \left(
				\frac{\log |y_0|}{\log n'}+t_0 
			\right)\right\}\\
			A &< \frac{B}{0.999 \cdot n' \log(n')}.
		\end{split}
	\end{align}
	
	We set the parameters 
	$$\mu = 0.6 \ \text{ and }\ \rho = 25.$$
	Now, we apply Theorem~\ref{theo-laurent-2} to get a numerical bound on 
    $\log |\Lambda'|$.
    We obviously have $D=3$.
    Further, we can immediately compute $\sigma$ and $\lambda$.
    Next, we set $a_1$ and $a_2$ such that \textit{(ii)} of Theorem~\ref{theo-laurent-2}
    is fulfilled and $h$ such that \textit{(i)} is satisfied.
    We compute $H$, $\omega$ and $\theta$ and check whether \textit{(iii)} is fulfilled.
    Now we can compute $C$ and $C'$, and finally we get a numerical bound on 
    $\log |\Lambda'|$,
    $$-\log |\Lambda'| < 1.694 \cdot 10^7.$$
    
    Now we can use inequality \eqref{eq-large-el-bounds-L'} to compute a new bound 
    for $t$, 
	$$ t_1 = \frac{-\log |\Lambda'| + \log 10.1}{\log n'} = 1026859.$$
	
	Next, we apply Theorem~\ref{theo-laurent-2} on 
	$$\Lambda = \log \varphi + B \log |\lambda_2|.$$
	We use the estimate for $B$ given in \eqref{eq-laurent-apriori-AB}. But, instead of the a-priori bound $t_0$ we can use the improved bound $t_1$.
    
	Further, we note that $\delta < 1.0001$, thus
	$|\varphi| < 1.0001\cdot n_0 ^A $ and
	$$h(\varphi) = \frac 13 \log n_0 \cdot \left(4.08 \cdot t_1 + 1.01 A\right).$$
	
	We choose
	$$\mu = 0.6 \ \text{ and }\ \rho = 45$$
	and we apply Laurent's Theorem \ref{theo-laurent-2} the same way as before on $\Lambda'$.
    We get a new numerical bound for
	$\log|\Lambda|$,
    $$ -\log |\Lambda| < 5.7 \cdot10^{12}$$
	With the bound from \eqref{eq-bound-Lambda-B-logn}, we can then reduce
	our bound for $B$ to
	$$B_1 = -\log|\Lambda| \cdot \frac{2}{\log n'} = 7.262 \cdot 10^{11}.$$
	
	To get a bound for $n$, we insert $B_1$ into
	$ n \log n < \frac{B}{0.99}$
	and get
	$$n < n_1 = 3.013\cdot 10^{10}. $$
	
	We know from \eqref{eq-large-el-bounds-L} that
	$$\log|y| < \frac{1}{3} \left(\log |\Lambda| +\log 8 - t \log n\right) < 1.9\cdot10^{12}.$$
	
	We can reduce $t$ once more.
	From Lemma~\ref{lem-L'-laurent}, we know that
	$$t < 173 \log n_1 \left(\log \left(\frac{B_1}{\log n'}\right)\right) < 102303.$$

	Now we iterate this process to further reduce our bound.
	After 10 iterations, we obtain $A = 0$.
	Since $A=0$ implies $B=0$, we get only the trivial solution, ergo we know that
	for $n \geq 1.45 \cdot 10^7$ there are no non-trivial solutions.

	We use this bound on $n$ and the bounds on $t$, $B$ and $\log|y|$ 
    obtained in the ninth iteration and improve Case~2b of 
    Proposition~\ref{prop-bounds-laurent} as follows:
	\begin{prop}\label{prop-bounds-reduced}
        Non-trivial solutions for \eqref{eq-lev-wald} can occur only in the following cases:
        \begin{itemize}[leftmargin=2cm]
            \item[Case 1)] If $t\geq \frac{\log |y|}{\log \lambda}$, then $ n\leq 157\ 442.$
            \item[Case 2)] If $t < \frac{\log|y|}{\log\lambda}$, then:
            \begin{itemize}
                \item[a)] If $A\geq t,$ then $ n\leq 275\ 539$.
                \item[b)] If $A<t$, then: \vspace{-1em}
                \begin{align*}
                    n &< 1.45 \cdot 10^{7},\\
        			t &< 50910\\
        			B &< 4.72 \cdot 10^{9},\\
        			\log |y| &< 1.24 \cdot 10^{9}.
                \end{align*}
            \end{itemize}
        \end{itemize}
    \end{prop}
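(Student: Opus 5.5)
Cases 1 and 2a of Proposition~\ref{prop-bounds-reduced} are copied verbatim from Proposition~\ref{prop-bounds-laurent}, so they need no further argument. The work is in Case~2b: there I will show that the iterated reduction with Theorem~\ref{theo-laurent-2}, started in Subsection~\ref{subsec-red-laurent} from the a-priori bounds $n_0,t_0,B_0,\log|y_0|$, stays valid at every step and ends in a contradiction for $n\geq n'=1.45\cdot 10^7$. I will then read off the bounds for $t$, $B$ and $\log|y|$ from the last non-contradictory iteration, the ninth.

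Each iteration has two halves.

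\emph{First half (bound on $t$).} I apply Theorem~\ref{theo-laurent-2} to $\Lambda'=A\log\lambda+B\log|\lambda_2|$, with $D=3$, $\mu=0.6$, $\rho=25$. The heights are $h(\lambda)=h(\lambda_2)<\frac{1.0001}{3}\log n$, so I choose $a_1,a_2$ to satisfy (ii) using the current upper bound $n_k$ (as an upper bound for the heights) and $n'$. I choose $h$ from (i) using $b_1,b_2$ bounded via \eqref{eq-laurent-apriori-AB} with the current $B_k$, and then check (iii). This gives a numerical lower bound for $\log|\Lambda'|$. Comparing it with \eqref{eq-large-el-bounds-L'}, namely $\log|\Lambda'|<\log 10.0001-t\log n$, and using $n\geq n'$ gives a new bound $t_{k+1}$.

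\emph{Second half (bounds on $B$, $n$, $\log|y|$).} I apply the theorem again to $\Lambda=\log\varphi+B\log|\lambda_2|$, with $\rho=45$ and height $h(\varphi)\le\frac13\log n_k\,(4.08\,t_{k+1}+1.01A)$, where $A<B_k/(0.999\,n'\log n')$. Combining the result with \eqref{eq-bound-Lambda-B-logn} gives $B_{k+1}$. Then Lemma~\ref{lem-est-AB-nlogn} gives $n_{k+1}$ through $n\log n<B/0.99$, and \eqref{eq-large-el-bounds-L} gives a new bound on $\log|y|$. Lemma~\ref{lem-L'-laurent} can tighten $t$ further.

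Before running the loop I must check that every hypothesis survives each iteration. The multiplicative independence of $\varphi$ and $\lambda_2$ has to be justified: if they were dependent, $\Lambda$ would reduce to a single-logarithm relation and a simpler argument applies. I also need the nonvanishing of $\Lambda$ and $\Lambda'$, as in Lemma~\ref{lem-L'-laurent}, and the validity of the inequalities for $\delta$ and the $c_i$ for $n\geq 10^5$, $t\geq 2$. Since every bound obtained is monotone in the inputs, replacing old bounds by new ones keeps all inequalities valid.

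The loop is repeated until the bound on $A=\lfloor B/(0.999\,n\log n)\rfloor$ drops below $1$. That happens at iteration 10. Now $A=0$ together with the first inequality of Lemma~\ref{lem-AB-lambda} gives $|B|<c_9/\lambda<1$, hence $B=0$, hence $a=b=0$, which is a trivial solution. This contradicts $n\geq n'$, so $n<1.45\cdot10^7$, and the iteration-nine values supply $t<50910$, $B<4.72\cdot10^9$ and $\log|y|<1.24\cdot10^9$.

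The main obstacle is the careful numerical bookkeeping in Theorem~\ref{theo-laurent-2}. Each iteration requires choosing $h$ so that (i) actually holds, since $h$ appears inside its own condition through $\log\lambda$ and $b_1/a_2+b_2/a_1$, and verifying (iii). I would do this by computer, fixing $\mu,\rho$ and taking $h$ equal to the maximum in (i). If the bounds stall before $A=0$, my first remedy would be to retune $\rho$.
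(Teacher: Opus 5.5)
Your route is the paper's own. Each round applies Theorem~\ref{theo-laurent-2} twice: to $\Lambda'$ with $\mu=0.6$, $\rho=25$ to cut $t$, then to $\Lambda=\log\varphi+B\log|\lambda_2|$ with $\rho=45$ to cut $B$, $n$ and $\log|y|$, stopping once the bound for $A$ drops below $1$. Your extra check of multiplicative independence of $\varphi$ and $\lambda_2$ is care the paper omits.

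The step that fails is the final read-off. Every round uses the hypothesis $n\ge n'$:
\begin{itemize}
\item the new $t$ is $-\log|\Lambda'|$ divided by $\log n'$;
\item the new $B$ is a multiple of $-\log|\Lambda|/\log n'$;
\item $A<B/(0.999\,n'\log n')$ is fed into $h(\varphi)$ and $a_1$.
\end{itemize}
So the round-nine values $t<50910$, $B<4.72\cdot10^9$, $\log|y|<1.24\cdot10^9$ hold only under $n\ge n'$, which round ten shows to be void. For the solutions that actually remain, with $n<1.45\cdot10^7$, the loop proves nothing beyond the bound on $n$. To bound $t$, $B$, $\log|y|$ in Case~2b you would have to rerun the loop with the lower end of the $n$-range (here $10^5$) in place of $n'$, which gives weaker numbers. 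The paper has exactly the same defect. It is harmless there only because Section~\ref{sec-small} re-derives $t$, $B$, $\log|y|$ for each fixed $n$ and takes nothing from Case~2b except $n<1.45\cdot10^7$.

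There is a second problem. The claim ``Lemma~\ref{lem-L'-laurent} can tighten $t$ further'' fails at these magnitudes, because that lemma carries $\bigl(\log(B/\log n)\bigr)^2$. The paper's value $t<102303$ after round one is exactly $173\log n_1\log(B_1/\log n')$, i.e.\ the square is dropped, as printed. With the square one gets about $2.5\cdot10^6$, which is worse than $t_1=1026859$. The direct bound from $\Lambda'$ keeps $t$ of order $10^5$ (about $1.2\cdot10^5$ even for $B$ near $5\cdot10^8$), so a correct run cannot reproduce $t<50910$.

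Worse, with $\rho=25$ and $\rho=45$ a rough recomputation has the $B$-bound levelling off around $2\cdot10^9$. That only gives $A\le 9$ or so, far above the threshold $0.999\,n'\log n'\approx 2.4\cdot10^8$, so the loop stalls before $A=0$. Your fallback of retuning is therefore essential, not optional. In the $\Lambda$-step, $a_1\approx(\rho-1)A\log\lambda+2\log n\,(4.08t+1.01A)$ is dominated by the $\rho$-independent height term once $A$ is small, while $C$ decays like $(\log\rho)^{-3}$. So a much larger $\rho$ there is the right lever. By a rough estimate, $\rho$ of order $10^3$ just suffices to push $B$ below $0.999\,n'\log n'$ and recover $n<1.45\cdot10^7$.
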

		
\section{
	Solving the remaining Thue euations}
\label{sec-small}

We have found an upper bound for the possible values for $n$ such that 
\eqref{eq-lev-wald} can possibly have non-trivial solutions.

In view of the bounds obtained in Proposition~\ref{prop-bounds-reduced}, 
we fix $2\leq n \leq 1.45 \cdot 10^7$ and proceed for fixed $n$ as follows:
\begin{enumerate}
    \item We use Laurent's Corollary~\ref{cor-laurent-2} to get an upper bound for $t$.
    Further, we derive bounds for $\log|y|$ and $|B|$ that depend only on $t$. 
    \item We reduce the bound on $t$ by iteratively applying Lemma~\ref{lem-cont-fr}.
    \item We solve Case~2. First, for $n\leq 275539$, we show that non-trivial 
    solutions can only exist if $n\leq 15$ (Lemma~\ref{lem-red-baker-dav}), 
    therefore Case~2a is (almost) done.    
    Next, we consider Case~2b, where $A<t$ and $275540\leq n\leq 1.45\cdot10^7$.
    We show by numerical computations that most combinations $(n,t,A, B)$ cannot have 
    non-trivial solutions.
    This step requires the main computational effort, taking approximately 36 hours 
    on a standard notebook (see the end of the paper for hardware specifications).
    The code is publicly available on Gitlab at
    \url{https://git.sbg.ac.at/b1065846/complete-resolution-of-a-family-of-twisted-thue-equations}.
    For the remaining values of $n$ and $t$, we solve the associated Thue equations.
    \item We solve Case~1 by solving all Thue equations with $n < 157442$ and $2\leq t \leq \bar{t}(n).$
\end{enumerate}
At this point, we have solved Equation \eqref{eq-lev-wald} for $n\geq 2$, so we need one more step for a complete solution.
\begin{enumerate}
    \setcounter{enumi}{4}
    \item We solve Equation \eqref{eq-lev-wald} for $n=0,1$.
\end{enumerate}

\subsection{Finding a (small) bound for $t$}

For a fixed $n$ in the range  $2\leq n < 1.45 \cdot 10^7$, we aim to find a small
upper bound for $t$. To achieve this, we perform the following computations using
Sage \cite{sagemath}. 
We compute the corresponding roots with 200-bit precision, thus we treat 
$\lambda,\lambda_1$ and $\lambda_2$ as actual numerical values in the following.
The constants $c_{12},\dots, c_{15}$ are computed numerically for each $n$ and 
depend only on $\lambda$.

\begin{lemma} \label{lem-bound-logy-dep-t}
We have $\log|y| < c_{12}\cdot t\log t$.
\end{lemma}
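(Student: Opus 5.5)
For fixed $n$ we have $\lambda$ as a numerical value, so the plan is to combine the upper bound $|\Lambda| < c_5 |y|^{-3}|\lambda\lambda_2^2|^{-t}$ from Lemma~\ref{lem-bound-Lambda} with a lower bound from Laurent's Corollary~\ref{cor-laurent-2} applied to the two-logarithm form $\Lambda = \log|\varphi| + B\log|\lambda_2|$, where $\varphi=\lambda^A\delta$. This is the same form used in Section~\ref{sec-tw}, but now $n$ is fixed and the heights are explicit numbers. The argument splits into two cases, as in Proposition~\ref{prop-bounds-laurent}. If $t\ge \log|y|/\log\lambda$, then $\log|y|\le t\log\lambda$, and the claim holds trivially for $t\geq 2$ once $c_{12}\ge \log\lambda/\log 2$. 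So we may assume $t<\log|y|/\log\lambda$.

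In that case, Lemma~\ref{lem-B-logy-log-lam-t} gives $|B| < 2c_8\log|y|/\log\lambda$. By Lemma~\ref{lem-AB-lambda}, $|A|<|B|/(\lambda\log\lambda)+$ const, so $|A|$ is also $O(\log|y|)$.
\begin{itemize}
\item The height of $\varphi$ satisfies $h(\varphi)\le |A|h(\lambda)+h(\delta)$ with $h(\delta)\le 4.181\,t\,h(\lambda)$. Hence $\log A_1 = c\,(|A|+t)$, where $c$ depends only on $\lambda$.
\item The second height is $\log A_2=h_m(\lambda_2)$, a constant.
\item The quantity $b'$ is then bounded by $|B|/(3\log A_1)+$ const. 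Since $|B|\ll \log|y|$ and $\log A_1\gg |A|+t$, this gives $\log b' \ll \log(\log|y|/t)$ in the regime of interest. We can also bound $b'$ crudely by a constant times $|B|$.
\end{itemize}

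Corollary~\ref{cor-laurent-2} then yields
\[
3\log|y| + t\log|\lambda\lambda_2^2| - \log c_5 < -\log|\Lambda| \le C_2 3^4 \bigl(\max\{\log b'+0.38, m/3,1\}\bigr)^2 \log A_1 \log A_2 .
\]
The main obstacle is the dependence of $\log A_1$ on $|A|$, since $|A|$ may itself be as large as $\log|y|$. To handle this, I split further as in Section~\ref{sec-tw}.
\begin{itemize}
\item If $|A|\ge t$, then $\log A_1\asymp |A| \asymp |B|/(\lambda\log\lambda)$. The ratio $|B|/\log A_1$ is bounded by a constant depending on $n$, so $b'$ is bounded, and the inequality reads $\log|y|\ll |A| \ll \log|y|/(\lambda\log\lambda)$ with explicit constants. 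For our fixed $n\ge2$ this either gives a contradiction or an absolute bound on $\log|y|$. Numerically, for $n\geq 2$, the leading constant $C_2 3^4\cdot(\ldots)h_m(\lambda_2)/(\lambda\log\lambda)$ need not be small, so here I would fall back on the alternative estimate. Lemma~\ref{lem-L'-laurent}, applied for fixed $n$ (Laurent on $\Lambda'$, whose heights are constants and whose upper bound is $\ll |\lambda\lambda_2^2|^{-t}$), gives $t\ll (\log |B|)^2$. Then $|A|\ll |B|$, and the main inequality yields $\log|y|\ll |A|(\log |B|)^2$. I expect this case to be covered anyway by combining it with the first case or with the bound $\log|y|\ll$ const.
\item If $|A|<t$, then $\log A_1\le c' t$ and $b'\ll \log|y|/t$. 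Writing $Y=\log|y|/t$, the inequality becomes $3Y < c''(\log Y)^2 + c'''$.
\end{itemize}

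In the case $|A|<t$, solving $3Y < c''(\log Y)^2 + c'''$ via Lemma~\ref{lem-petho-dew} with $k=2$ gives $Y<$ const, hence $\log|y|\ll t$. That bound is even stronger than $t\log t$. The factor $\log t$ is needed only in the case $|A|\ge t$, where I would use $t\gg \log$-type relations between $|B|$ and $t$ coming from Lemma~\ref{lem-L'-laurent} with fixed $n$. There the inequality is $\log|y|\ll t\,(\log b')^2$ with $b'$ controlled by $\log|y|/t$, which again gives $\log|y|/t\ll \log t$ after applying Lemma~\ref{lem-petho-dew}.

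Finally, $c_{12}$ is taken as the maximum of the constants arising in all cases, evaluated numerically for the given $\lambda$ with $t\ge2$.
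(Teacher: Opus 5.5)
\textbf{There is a genuine gap in the subcase $|A|\ge t$, and that subcase is the whole content of the lemma.} Your treatment of the case $t\ge \log|y|/\log\lambda$ is fine.

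Once $A$ is absorbed into $\varphi=\lambda^A\delta$, Laurent's lower bound is linear in $\log A_1\asymp |A|\,h(\lambda)$. Since $|A|\asymp |B|/(\lambda\log\lambda)\asymp \log|y|/(\lambda(\log\lambda)^2)$, your final inequality has the shape $3\log|y|\le K(n)\log|y|$. Here $K(n)$ decays like $1/\lambda$ but carries the factor $C_2\cdot 3^4\cdot(\max\{\log b'+0.38,10\})^2>10^5$, so it is below $3$ only for large $n$. That is exactly how Section~\ref{sec-tw} obtains $n<275539$. It gives nothing for the fixed small $n$ this lemma is meant to serve; its bound is the a priori input for the reduction in Lemma~\ref{lem-red-baker-dav}.

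Your fallback does not repair this. Laurent on $\Lambda'$ gives $t\ll(\log|B|)^2$, which bounds $t$ in terms of $B$, the wrong direction. Inserting it into $\log|y|\ll|A|(\log|B|)^2$ yields the vacuous $\log|y|\ll\log|y|(\log\log|y|)^2$. Your last paragraph's claim that in this subcase the inequality reads $\log|y|\ll t(\log b')^2$ is incorrect, because there $\log A_1\asymp|A|$, not $t$. The hope that the case is ``covered anyway'' is not justified.

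Note also that the subcase $|A|<t$ which you do settle is the easy one. There Lemmas~\ref{lem-AB-lambda} and~\ref{lem-B-logy-log-lam-t} give $c_8'(\log|y|/\log\lambda+t)<|B|<(\lambda+1)\log\lambda\,t+c_9/\lambda$. Hence $\log|y|\ll t$ follows with no linear form estimate at all. The regime where $\log|y|$ is large compared with $t$ is precisely the one you leave open.

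The missing idea is to keep $A$ out of the heights by using a genuine three-logarithm estimate. The paper applies Baker--W\"ustholz (Theorem~\ref{theo-baker-wust}) to $\Lambda=A\log\lambda+B\log|\lambda_2|+\log|\delta|$:
\begin{itemize}
\item $h(\lambda)=h(\lambda_2)=\frac13\log(\lambda+1)$ are constants for fixed $n$.
\item $h(\delta)\le 4t\,h(\lambda)+2\log 2$ is the only $t$-dependent height.
\item The coefficients enter only through $\log B$, so $-\log|\Lambda|<c_{10}\,t\log B$.
\end{itemize}
Combine this with $3\log|y|+t\log|\lambda\lambda_2^2|-\log c_5<-\log|\Lambda|$ and with $B<2c_8\log|y|/\log\lambda$ (valid when $\log|y|/\log\lambda\ge t$). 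This gives $\log|y|<c_{11}\,t\log\log|y|$, and Lemma~\ref{lem-petho-dew} turns it into $\log|y|\le 2c_{11}t\log(c_{11}t)$. The argument treats all sizes of $A$ uniformly, with no split on $A$ versus $t$. This is where the factor $\log t$ actually comes from.
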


\begin{proof}
    We use the Theorem of Baker and Wüstholz~\ref{theo-baker-wust} to find a lower bound on
    	$$ \Lambda = A \log |\lambda| + B \log |\lambda_2| + \log |\delta| $$
    with
    \begin{align*}
    	\delta =
    	\begin{cases}
    		\frac{\alpha - \alpha_2}{\alpha - \alpha_1} &\text{if } j = 0,\\
    		\frac{1 - \left(\frac{\lambda_j}{\lambda}\right)^t}
    		{1 - \left(\frac{\lambda_1}{\lambda_2}\right)^t}
    		&\text{if }j = 1,2.
    	\end{cases}
    \end{align*}
    
    We know that
    $$ h(\lambda) = h(\lambda_2) = \frac 13 \log(\lambda+1)$$
    and
    $$h\left(\delta\right) \leq 4t \cdot h(\lambda) + 2 \log 2,$$
    thus we get
    \begin{align*}
    	\log |\Lambda| 
    	&> - C(3,3) \log B \cdot h(\lambda)^2 \cdot h(\delta)\\
    	&>  -c_{10} \cdot t \log B.
    \end{align*} 
    Further, we know that
    $$\log c_5 - 3 \log |y| - t \log\abs{\lambda\lambda_2^2} >\log |\Lambda|, $$
    hence we get
    \begin{align*}
    	\log|y| &< \frac 13 \left(  c_{10} + \log\abs{\lambda\lambda_2^2} + \frac{\log c_5}{2}\right) \cdot t \log B.\\
    \end{align*}
    We assume for the moment that
    \begin{equation}
    	\label{eq-max-B}
    	\frac{\log|y|}{\log\lambda} \geq t,
    \end{equation}
    thus $B < 2\cdot c_8 \frac{\log|y|}{\log\lambda} $.
    Then we have
    \begin{align*}
    	\log |y| &< \frac 13\left(c_{10} + \log\abs{\lambda\lambda_2^2} + \frac{\log c_5}{2} \right)  \cdot t \cdot \log\left(2\cdot c_8 \frac{\log|y|}{\log \lambda}\right)\\
    	&= \frac 13\left(c_{10} + \log\abs{\lambda\lambda_2^2} + \frac{\log c_5}{2} \right)  \cdot t \cdot 
    	\left(\log\left(\frac{2\cdot c_8}{\log \lambda}\right)+ \log \log|y|\right)\\
    	&= c_{11} \cdot t \log\log |y|.
    \end{align*}
    Applying Lemma~\ref{lem-petho-dew}, we then obtain
    \begin{equation}
    	\label{eq-res-matv}
    	\log |y| \leq 2\cdot c_{11} \cdot t \cdot \log \left(c_{11}\cdot t\right) = c_{12} \cdot t \log t.
    \end{equation}
        
    If \eqref{eq-max-B} is not fulfilled, we have $$\log |y| < t \log \lambda.$$
    It is easy to see that $c_{10} >  \left(\log \lambda\right)^2$, thus $c_{12} > \log|\lambda|$,
    therefore we may use \eqref{eq-res-matv} as a valid upper bound in all cases.
\end{proof}

\begin{lemma} \label{lem-bound-B-dep-t}
    We have $|B| < c_{13}\cdot t \log t$.
\end{lemma}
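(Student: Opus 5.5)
The plan is to combine the upper bound for $|B|$ from Lemma~\ref{lem-B-logy-log-lam-t} with the bound for $\log|y|$ from Lemma~\ref{lem-bound-logy-dep-t}. Inequality~\eqref{eq-bound_B} gives
$$|B| < c_8\left(\frac{\log|y|}{\log\lambda}+t\right).$$
Here $c_8$ is computed numerically for the fixed $n$ (worst case over the types $j$, and over $t\geq 2$, as in the paper). Substituting $\log|y|<c_{12}\,t\log t$ yields
$$|B| < c_8\left(\frac{c_{12}}{\log\lambda}\, t\log t + t\right).$$

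Since $t\geq 2$, we have $\log t\geq \log 2$ and hence $t\leq t\log t/\log 2$. Therefore
$$|B| < c_8\left(\frac{c_{12}}{\log\lambda}+\frac{1}{\log 2}\right) t\log t \eqqcolon c_{13}\, t\log t.$$
The constant $c_{13}$ depends only on $\lambda$, i.e.\ on $n$, as required.

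I should check one point: the constant $c_8$ in Lemma~\ref{lem-B-logy-log-lam-t} contains $c_2$, which itself depends on $t$ and $|y|$ through $c_1/(|y|^2|\lambda\lambda_2|^t)$ and $|\lambda_1/\lambda|^t$. I will bound this uniformly by taking $|y|\geq1$ and $t\geq2$, using that these terms decrease in $t$ and $|y|$, so $c_8$ becomes a constant depending only on $n$.

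The main obstacle is that, for small $n$ (say $n=2$), the expression $1\pm c_2/(2\log\lambda)$ might not behave well. In particular, the lower constant $c_8'$ could be nonpositive. Only the upper bound is needed here, though, so this causes no difficulty.

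A second point to check is the case where $\log|y|/\log\lambda<t$. In that case we directly get $|B|<2c_8t$, which is again at most $c_{13}\,t\log t$. So the bound holds in both cases.
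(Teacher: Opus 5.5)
Your proposal is correct and follows the paper's argument exactly: substitute $\log|y|<c_{12}\,t\log t$ from Lemma~\ref{lem-bound-logy-dep-t} into the upper bound of Lemma~\ref{lem-B-logy-log-lam-t}, then absorb the term $c_8t$ into $t\log t$. Your choice $c_{13}=c_8\bigl(c_{12}/\log\lambda+1/\log 2\bigr)$ just makes explicit the constant the paper leaves implicit. Two small remarks: the separate case $\log|y|/\log\lambda<t$ is unnecessary, because Lemma~\ref{lem-bound-logy-dep-t} already covers both cases; and when making $c_8$ uniform, note that $1-|\lambda_1/\lambda|^t$ increases with $t$, so you should bound it by $1$ rather than evaluate it at $t=2$.
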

\begin{proof}
    Since $|B| < c_8 \left(\frac{\log|y|}{\log\lambda}+t\right),$ we get
    \begin{align*}
    	|B| < c_8 \left(c_{12} \frac{t\log t}{\log \lambda} + t\right) = c_{13} \cdot t \log t.
    \end{align*}
\end{proof}    

\begin{lemma}
    We have $t < c_{15}$.
\end{lemma}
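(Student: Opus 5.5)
We will bound $t$ for fixed $n$ by applying Laurent's Corollary~\ref{cor-laurent-2} to the two-logarithm form
$$\Lambda' = A\log|\lambda| + B\log|\lambda_2|.$$
Here $n$ is fixed, so $\lambda,\lambda_2$ are fixed numbers, computed to 200 bits. The upper bound comes from Lemma~\ref{lem-bound-L'}:
$$\log|\Lambda'| < \log c_9 - t\log\abs{\lambda\lambda_2^2}.$$
This decays linearly in $t$. Laurent's lower bound depends only logarithmically on $A$ and $B$, and by Lemma~\ref{lem-bound-B-dep-t} those logarithms are $O(\log t)$. Comparing the two will therefore force $t$ to be bounded by an explicit constant $c_{15}$ depending only on $\lambda$.

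First, check the hypotheses of Corollary~\ref{cor-laurent-2}. The units $\lambda,|\lambda_2|$ are multiplicatively independent, since $\{\lambda,\lambda_2\}$ is a fundamental system. Both $\lambda$ and $|\lambda_2|$ exceed $1$, so their logarithms are real and positive. Moreover $\Lambda'\neq0$ for non-trivial solutions, as in the proof of Lemma~\ref{lem-L'-laurent}. The coefficients may have signs, but after possibly replacing $\lambda_2$ by $\lambda_2^{-1}$ (or the form by its negative) we have positive $b_1,b_2$ of sizes $|A|,|B|$. If $A=0$ or $B=0$, the form collapses to a single logarithm, and $|\Lambda'|\geq \log|\lambda_2|$ or $\geq\log\lambda$ contradicts the exponential upper bound once $t$ is moderately large.

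Next, set $d=3$ and $\log A_1=\log A_2=\tfrac13\log(\lambda+1)$, which dominates $h_m$. By Lemma~\ref{lem-AB-lambda}, $|A|<|B|$, so
$$b'\le \frac{2|B|}{\log(\lambda+1)} < c_{14}\, t\log t$$
by Lemma~\ref{lem-bound-B-dep-t}. Taking $m=30$, $C_2=17.9$, the corollary gives
$$-\log|\Lambda'| \le 17.9\cdot 3^4 \left(\max\left\{\log(c_{14}t\log t)+0.38,\,10\right\}\right)^2 \frac{(\log(\lambda+1))^2}{9}.$$
Combining with the upper bound yields
$$t\log\abs{\lambda\lambda_2^2} - \log c_9 < 161.1\,(\log(\lambda+1))^2\left(\max\{\log(c_{14}t\log t)+0.38,10\}\right)^2.$$
This has the shape $t < a + b(\log t)^2$ up to lower-order terms; $\log\log t$ can be absorbed by using $\log(t\log t)\le 2\log t$ for $t\geq 3$. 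Lemma~\ref{lem-petho-dew} with $k=2$ then gives an explicit bound $t<c_{15}$. If the maximum is $10$, we get directly $t<(16110(\log(\lambda+1))^2+\log c_9)/\log\abs{\lambda\lambda_2^2}$, and $c_{15}$ is the larger of the two bounds.

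The main obstacle is bookkeeping rather than ideas. First, the constants $c_9$, $c_{12}$, $c_{13}$ were derived under $t\ge2$ and $n\ge2$, and must be checked to remain valid uniformly in $t$; they decrease in $t$, so evaluating them at $t=2$ suffices. Second, the sign issue in applying Corollary~\ref{cor-laurent-2} needs care: the corollary as stated requires $\Lambda=b_2\log\alpha_2-b_1\log\alpha_1$ with positive $b_i$. I would handle this by writing $\Lambda'=|B|\log|\lambda_2|^{\pm1}-|A|\log\lambda^{\mp1}$ and noting that the heights are unchanged.
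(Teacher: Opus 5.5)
Your proposal is correct and is essentially the paper's own proof: you apply Corollary~\ref{cor-laurent-2} to $\Lambda'$ with $\log A_1=\log A_2=\frac13\log(\lambda+1)$ and $b'\le 2|B|/\log(\lambda+1)\ll t\log t$ via Lemma~\ref{lem-bound-B-dep-t}, and play this against the exponential decay of Lemma~\ref{lem-bound-L'}; your $(\log t)^2$ bookkeeping with Lemma~\ref{lem-petho-dew} is in fact more accurate than the paper's shorthand $-c_{14}\log t$. One caveat you share with the paper is that $c_9$ in Lemma~\ref{lem-bound-L'} does not decrease in $t$ (its last term is at least $\frac12|\lambda_2|^{3t}$), so the uniform decay rate is really $(|\lambda_2|/\lambda)^t$ and you should compare $t\log(\lambda/|\lambda_2|)>0$ rather than $t\log\abs{\lambda\lambda_2^2}$ with the Laurent bound, which still yields a finite $c_{15}$; also, when $A$ and $B$ have the same sign, do not invert a unit (that makes a logarithm negative, contrary to the hypotheses of Corollary~\ref{cor-laurent-2}), but simply use $|\Lambda'|\ge\log|\lambda_2|$ as in your $A=0$ or $B=0$ case.
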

\begin{proof}
    Now we apply Laurent's Lemma~\ref{cor-laurent-2} on $\Lambda'$. 
    We set
    $$ b' = \frac{A+B}{\log (\lambda+1)} < \frac{2 c_{13} \cdot t \log t}{\log (\lambda+1)}.$$
    Since $c_{13}$ arises from the Baker–Wüstholz Theorem, it is very large, 
    consequently $\log b' + 0.38 >10.$
    It follows that
    \begin{align*}
    	\log |\Lambda'| &> - 17.9 \cdot 3^4  \left( \max \left\{\log b'+0.38, 10\right\}\right)^2 h(\lambda)^2\\
    	&> - 17.9 \cdot 3^4
    	\left( \log\left(\frac{2  c_{13} \cdot t \log t}{\log(\lambda+1)}\right) + \log(1.5) \right)^2
    	\left( \frac 13 \log(\lambda+1) \right)^2\\
    	&= - 17.9 \cdot 9 
    	\left( \log t +\log \log t 
    	+ \log \frac{2\cdot 1.5\cdot c_{13}}{\log (\lambda+1)}\right)^2 \log(\lambda+1)^2\\
    	&= - c_{14} \cdot \log t.
    \end{align*}
    
    We know from Lemma~\ref{lem-bound-L'} that $|\Lambda'| < c_9 |\lambda\lambda_2^2|^{-t}$, 
    thus we get
    $$ t \log \abs{\lambda \lambda_2^2} \leq c_{14} \log t + \log(c_9),$$
    or equivalently
    $$ t < c_{15}.$$
\end{proof}

Substituting this bound into Lemmas~\ref{lem-bound-logy-dep-t} and \ref{lem-bound-B-dep-t}, we get
the absolute bounds
\begin{align*}
    \log |y| &< c_{12} \cdot c_{15} \log c_{15},\\
    |A| < |B| &< c_{13} \cdot c_{15} \log c_{15}.
\end{align*}

\subsection{Reducing $t$} 
To reduce the bound for $t$, we apply the following lemma from \cite[p47]{baker-concise-introduction}.

    \begin{lemma}
		\label{lem-cont-fr}
		Let $\mu \in \RR\backslash\QQ$, $\mu = [a_0; a_1,a_2,\dots]$.
		Let $l\in \ZZ^+$, set $\tilde{A}=\max_{i = 1, \dots, l}\{a_i\}$ and let
		$\frac{p_l}{q_l}$ be the $l$-th convergent to $\mu$.
		Then 
		$$\frac{1}{(2+\tilde{A})q_l} < \abs{p-q\mu}$$
		for any rational fraction $\frac pq$ with $q\leq q_l$.
	\end{lemma}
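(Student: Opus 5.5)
The statement to prove is Lemma~\ref{lem-cont-fr}: for $q\le q_l$, every integer $p$ satisfies $\abs{p-q\mu} > \frac{1}{(2+\tilde{A})q_l}$. The plan is to combine two classical continued-fraction facts. First, the best approximation property: for $1\le q<q_{k+1}$ and any integer $p$ we have $\abs{p-q\mu}\ge \abs{p_k-q_k\mu}$. Second, the standard error formula for convergents.

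\emph{Step 1 (locating $q$).} Given $q\le q_l$, pick $k\le l$ with $q_k\le q<q_{k+1}$; at the boundary this means $q=q_l$ and $k=l$. Best approximation then gives
$$\abs{p-q\mu}\ \ge\ \abs{p_k-q_k\mu}.$$
This reduces the claim to a bound on the convergent errors.

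\emph{Step 2 (error of convergents).} Write $\mu=[a_0;a_1,\dots,a_k,\mu_{k+1}]$, where $\mu_{k+1}$ is the complete quotient. Then
$$\abs{p_k-q_k\mu}=\frac{1}{\mu_{k+1}q_k+q_{k-1}}.$$
Since $\mu\notin\QQ$, we have $\mu_{k+1}<a_{k+1}+1$, and $q_{k-1}\le q_k$. Hence
$$\abs{p_k-q_k\mu} > \frac{1}{(a_{k+1}+2)q_k}.$$

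\emph{Step 3 (index mismatch).} The bound from Step~2 involves $a_{k+1}$, which for $k=l$ is $a_{l+1}$. That term is not covered by $\tilde{A}=\max_{i\le l}a_i$, so this is the main obstacle. I would handle it with the recurrence $q_{k+1}=a_{k+1}q_k+q_{k-1}$.

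For $k<l$, the recurrence gives $a_{k+1}q_k\le q_{k+1}\le q_l$. Combined with $q_k\le q_l$, this yields
$$(a_{k+1}+2)q_k\ \le\ q_l+2q_l,$$
so this case needs adjusting. A cleaner route is to use, for $k<l$,
$$\mu_{k+1}q_k+q_{k-1}<q_{k+1}+q_k\le (1+1)\,q_l,$$
since $\mu_{k+1}q_k+q_{k-1}<(a_{k+1}+1)q_k+q_{k-1}=q_{k+1}+q_k$. This gives $\abs{p-q\mu}>\frac{1}{2q_l}\ge\frac{1}{(2+\tilde A)q_l}$.

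The edge case $k=l$, i.e.\ $q=q_l$, needs care. If $p\ne p_l$, then $p/q_l$ is not a convergent and the bound follows by comparing with $\abs{p_{l-1}-q_{l-1}\mu}$. For $p=p_l$, the bound we can prove directly is
$$\abs{p_l-q_l\mu}=\frac{1}{\mu_{l+1}q_l+q_{l-1}},$$
which involves $a_{l+1}$. I would therefore read the lemma as in Baker's book: either the maximum is taken up to index $l+1$, or $q<q_l$ strictly. In the latter reading, $k\le l-1$ and everything is covered by the $k<l$ argument with $\tilde A$. I expect pinning down this indexing convention to be the only delicate point; the rest is routine manipulation of the recurrences.
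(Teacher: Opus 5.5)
Your argument is correct. The paper gives no proof of its own and only cites Baker's book; your route is the standard one behind that result. You locate $q_k\le q<q_{k+1}$, apply the best-approximation property, and then bound $\abs{p_k-q_k\mu}=1/(\mu_{k+1}q_k+q_{k-1})>1/(q_{k+1}+q_k)$.

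Your worry about the boundary is justified, and you need not hedge it: the lemma as printed is false for $(p,q)=(p_l,q_l)$. Take $\mu=[0;2,3,N,1,1,\dots]$ with $N\ge 5$ and $l=2$. Then $p_2/q_2=3/7$ and $\tilde A=3$, while $\abs{3-7\mu}=1/(7\mu_3+2)<1/(7N)\le 1/35=1/((2+\tilde A)q_2)$. So the hypothesis has to be $q<q_l$, or $\tilde A$ must include $a_{l+1}$, exactly as you propose. The paper is unaffected: in the subsection on reducing $t$ the lemma is applied with $q=\abs{B}$, and $q_l$ is chosen to exceed the bound for $\abs{B}$, so only the strict case is used.

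Two minor points. First, for $q<q_l$ your Step 3 actually yields $\abs{p-q\mu}>1/(q_l+q_{l-1})\ge 1/(2q_l)$, so in the form stated with $q_l$ in the denominator the quantity $\tilde A$ plays no role. It matters only in the sharper standard form $\abs{p-q\mu}>1/((\tilde A+2)q)$ for $1\le q<q_l$. That form also follows at once from your Steps 1 and 2, since $a_{k+1}\le\tilde A$ and $q_k\le q$. Second, for $q=q_l$ and $p\ne p_l$, comparing with $\abs{p_{l-1}-q_{l-1}\mu}$ is vaguer than necessary. It is simpler to write $\abs{p-q_l\mu}\ge 1-\abs{p_l-q_l\mu}>1-1/q_{l+1}\ge 1/2$, which already exceeds $1/((2+\tilde A)q_l)$.
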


We set $\mu = \frac{\log\abs{\lambda_2}}{\log\lambda}$ and compute the
corresponding continued fraction $[a_0;a_1,a_2,\dots]$.
We then compute the denominators $q_l$ of the convergents by
$$q_0 = 1, \quad q_1 = a_1, \quad q_l = a_l q_{l-1} + q_{l-2}$$
At some point, we reach a denominator $q_l$ that exceeds our bound for $B$.
At this point, we may apply Lemma~\ref{lem-cont-fr} with $\frac pq = \frac AB$
and $\tilde{A} = \max_{i=1,\dots,l}\{a_i\}$,
which yields
	\begin{align*}
		\frac{1}{(2+\tilde{A})q_l} &< \abs{A -B \frac{\log\abs{\lambda_2}}{\log\lambda}}\\
		&\leq \frac{c_9}{\log \lambda} \abs{\lambda\lambda_2^2}^{-t}.
	\end{align*}
Applying the logarithm, we obtain a new bound for $t$,
	\begin{align*}
		t \log\abs{\lambda\lambda_2^2} 
			\leq \log \left ( \frac{c_9}{\log\lambda} \right ) 
				+ \log\left (2+\tilde{A}\right )+\log q_l,
	\end{align*}
which is significantly smaller than the previous one.

Using the new bound in Lemma~\ref{lem-bound-B-dep-t}, we get a sharper upper
bound for $|B|$, allowing us to apply Lemma~\ref{lem-cont-fr} again with the 
refined bounds.

We give a few examples of the refined bounds after 3 iterations:
\begin{center}
	\begin{tabular}{ c| ccccccc} 
		$n$ & 2& 10 & 100 & 1000 & $10^5$ & $10^6$ & $10^7$ \\
		\hline
		$\bar{t}(n)$ & 83 &22 & 11 & 8 & 5 & 4 & 3
	\end{tabular}
\end{center}

Since the bound on $\log|y|$ in Lemma~\ref{lem-bound-logy-dep-t} 
depends only on $t$, we also get an improved bound on $\log |y|$.

\subsection{Case 2a}

We start by looking at  Case~2, where $t < \frac{\log|y|}{\log\lambda}$.

The bound for Case~2a is $n < 275539$.
We consider these values for $n$ in both Cases~a and b.

To solve this case, we use the Baker-Davenport reduction \cite{baker-davenport} given in the form of Odjoumani and the last author in \cite{odjoumani-ziegler}.
	
\begin{theorem}[Baker-Davenport]
    \label{theo-baker-dav}
    Consider a Diophantine inequality of the form
    $$ \abs{ n \mu + \tau - x } < c_1 \exp\left( -c_2 n \right),$$
    where $n \in \NN$, $x \in \ZZ$, $c_1$ and $c_2$ are positive constants, 
    and $\mu$ and $\tau$ are real numbers.
    Suppose $n < N$ and $\kappa > 1$ such that there exists a convergent
     $p/q$ to $\mu$, where
    $$\{q\,\mu\} < \frac{1}{2\kappa N} \qquad \text{and} \qquad
        \{q\,\tau\} > \frac{1}{\kappa}, $$
    where $\{\cdot\}$ denotes the distance to the nearest integer.
    Then we have
    $$n \leq \frac{ \log\left(2\kappa q c_1\right) }{c_2}.$$
\end{theorem}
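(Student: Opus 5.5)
The plan is to argue directly by multiplying the inequality by $q$ and comparing it with the distance of $q\tau$ to the nearest integer. The argument uses only the two hypotheses $\{q\,\mu\} < \frac{1}{2\kappa N}$ and $\{q\,\tau\} > \frac1\kappa$, together with $n<N$. The fact that $p/q$ is a convergent matters only for finding such a $q$ in practice (via the continued fraction of $\mu$), not for the proof.

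First we multiply the assumed inequality by $q>0$:
$$\abs{n q\mu + q\tau - qx} < q c_1 \exp(-c_2 n).$$
Let $P$ be the integer nearest to $q\mu$, and write $q\mu = P + \varepsilon$ with $|\varepsilon| = \{q\,\mu\} < \frac{1}{2\kappa N}$. Substituting gives
$$n q\mu + q\tau - qx = q\tau - (qx - nP) + n\varepsilon,$$
where $qx - nP \in \ZZ$.

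Next we bound the distance of $q\tau$ to the nearest integer from above by its distance to the particular integer $qx-nP$, and then use the triangle inequality:
$$\{q\,\tau\} \le \abs{q\tau - (qx-nP)} \le \abs{nq\mu + q\tau - qx} + n|\varepsilon| < q c_1 \exp(-c_2 n) + \frac{n}{2\kappa N}.$$
Since $n<N$, the last term is less than $\frac{1}{2\kappa}$. Combining this with the hypothesis $\{q\,\tau\} > \frac1\kappa$ yields
$$\frac{1}{2\kappa} < q c_1 \exp(-c_2 n),$$
that is, $\exp(c_2 n) < 2\kappa q c_1$. Taking logarithms and dividing by $c_2>0$ gives
$$n < \frac{\log(2\kappa q c_1)}{c_2},$$
which is the claimed bound (even with strict inequality).

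There is no serious obstacle. The only point needing care is the bookkeeping between $q\mu$ and its nearest integer $P$, so that the error term is exactly $n\{q\,\mu\}$ and is controlled by $n<N$.
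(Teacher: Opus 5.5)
Your proof is correct: multiplying by $q$, replacing $q\mu$ by its nearest integer so that the extra error is $n\{q\mu\} < \frac{1}{2\kappa}$, and comparing with $\{q\tau\} > \frac{1}{\kappa}$ is the standard argument for this Baker--Davenport reduction lemma, and it even gives the strict inequality. The paper does not prove this statement itself but quotes it from Baker--Davenport in the form of Odjoumani and Ziegler, and your computation is essentially the usual proof found there; you are also right that the convergent property of $p/q$ is only needed to find a suitable $q$ in practice.
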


An application of Theorem~\ref{theo-baker-dav} will yield the following result:
\begin{lemma}
	\label{lem-red-baker-dav}
	If $t < \frac{\log |y|}{\log \lambda}$ and $n \leq 275539$, then non-trivial solutions 
    can only exist for
	$$n \leq 15.$$
\end{lemma}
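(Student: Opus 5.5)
For each fixed $n$ with $16\le n\le 275539$ (and, in principle, every $2\le n\le 275539$, keeping track of those where the argument fails), I plan to turn the linear form $\Lambda$ into an inhomogeneous Diophantine inequality of the type in Theorem~\ref{theo-baker-dav} and then reduce. Lemma~\ref{lem-bound-Lambda} gives
$$\abs{A\log\lambda + B\log|\lambda_2| + \log|\delta|} < c_5\,|y|^{-3}\abs{\lambda\lambda_2^2}^{-t}.$$
Dividing by $\log\lambda$, this reads
$$\abs{B\mu+\tau+A} < \frac{c_5}{\log\lambda}\,|y|^{-3},\qquad \mu=\frac{\log|\lambda_2|}{\log\lambda},\quad \tau=\frac{\log|\delta|}{\log\lambda}.$$
This is the form $|n\mu+\tau-x|$, with $B$ in the role of $n$ and $-A$ in the role of $x$. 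Here I use the Case~2 hypothesis $t<\log|y|/\log\lambda$: by Lemma~\ref{lem-B-logy-log-lam-t} it gives $|B|<2c_8\log|y|/\log\lambda$, so $\log|y|>\frac{\log\lambda}{2c_8}|B|$. Hence the right-hand side is at most $c_1'\exp(-c_2'|B|)$ with $c_2'=\frac{3\log\lambda}{2c_8}$. Negative $B$ is handled by replacing $(\mu,\tau)$ with $(-\mu,-\tau)$, or equivalently by applying the theorem to $|B|$ with sign variants.

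The complication is that $\tau$ depends on $t$ and on the type $j$, through $\delta$. I therefore loop over $j\in\{0,1,2\}$ and over $2\le t\le \bar t(n)$, using the small bound $\bar t(n)$ from the continued fraction reduction of the previous subsection. For each triple $(n,j,t)$, $\delta$ is a fixed real number that I compute to 200 bits. The a priori bound $N$ for $|B|$ comes from Lemma~\ref{lem-bound-B-dep-t}, namely $N=c_{13}\bar t\log\bar t$. I then search for a convergent $p/q$ of $\mu$ with $\{q\mu\}<1/(2\kappa N)$ and $\{q\tau\}>1/\kappa$, taking $\kappa$ around $10$ and increasing it or moving to a later convergent if the test fails. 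Theorem~\ref{theo-baker-dav} then yields $|B|\le\log(2\kappa q c_1')/c_2'$, which is tiny, roughly $\log q/\log\lambda$. Feeding this back, $\log|y|<|B|\log\lambda/(2c_8')$ becomes small. Combined with $\log|y|>t\log\lambda\ge 2\log\lambda$ and $|B|>c_8'(\log|y|/\log\lambda+t)>3c_8'$, this gives a contradiction for large $n$. For the few surviving small $(B,t)$ I enumerate directly: $A$ is determined by $\Lambda'$ via Lemma~\ref{lem-AB-lambda}, and I check whether $\beta_j=\lambda^a\lambda_2^b$ produces an integer pair $(x,y)$.

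I expect the main obstacle to be the degenerate case where $\{q\tau\}$ stays small for every usable convergent. This happens when $\tau$ is essentially a rational combination of $1$ and $\mu$, for instance when $\log|\delta|$ is extremely small because $\delta\approx1$ up to $(\lambda_2/\lambda)^t$, or when $\delta$ is itself a product of powers of the units. It also happens for the small $n$ where $\lambda$ is small, $c_2'$ is weak, and the hypotheses $n\ge 2$ and the constant estimates give poor control. For these cases I would first try more convergents and a larger $\kappa$, and otherwise fall back to the homogeneous Lemma~\ref{lem-cont-fr}. The claim $n\le15$ reflects exactly the range where this last step fails, and those $n$ are deferred to solving the Thue equations directly.
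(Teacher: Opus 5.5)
Your setup matches the paper's. You use the same inhomogeneous form $A+B\mu+\tau(j)$ with $\mu=\log|\lambda_2|/\log\lambda$, and the same decay rate $3\log\lambda/(2c_8)$ in $|B|$, obtained from the Case~2 hypothesis and \eqref{eq-bound_B}. You also take a Baker--W\"ustholz bound as the a priori $N$ and run the same convergent search in Theorem~\ref{theo-baker-dav}.

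The gap is in the step that is supposed to produce the contradiction. The lower bounds you play against the reduced bound are $|B|>c_8'(\log|y|/\log\lambda+t)\ge 4c_8'$ and $\log|y|>t\log\lambda\ge 2\log\lambda$. These are constants or grow only like $\log n$. The reduction, by contrast, gives $|B|\le\frac{2c_8}{3\log\lambda}\log(2\kappa q c_1')$, where $\log q\ge\log(2\kappa N)$ is several dozen because $N$ is a Baker--W\"ustholz bound. This drops below $4c_8'\approx 11$ only once $\log\lambda$ is a sizeable fraction of $\log q$. Likewise, the resulting bound $\log|y|<|B|\log\lambda/c_8'$ is essentially an absolute multiple of $\log q$, so comparing it with $t\log\lambda$ helps only for very large $n$. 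Hence for $n=16$ and a long stretch beyond it, your argument yields no contradiction, and the surviving cases include every $n$ in that stretch, not a few. Your explanation that $n\le 15$ is where the reduction fails is also a misdiagnosis: in the paper the reduction succeeds for $n\le15$ as well, and it is the final comparison that fails there.

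The missing idea is the $n$-dependent lower bound for $|B|$ coming from $\Lambda'$. By Lemma~\ref{lem-AB-lambda}, $|B|>\lambda\log\lambda\,|A|-c_9/\lambda$, and $A\neq0$ for a non-trivial solution. Indeed, for $n\ge16$, if $A=0$ then $|B|\log|\lambda_2|=|\Lambda'|<c_9|\lambda\lambda_2^2|^{-t}$ forces $B=0$, and $A=B=0$ leads only to trivial solutions. So $|B|$ is at least about $n\log n$ (cf.\ Lemma~\ref{lem-est-AB-nlogn}). The paper's conclusion $n\le 15$ is precisely that this lower bound exceeds the Baker--Davenport bound for every $n\ge16$.

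You do cite Lemma~\ref{lem-AB-lambda}, but only to recover $A$ in a final enumeration. Carrying that enumeration out would give $|A|<1$ for every surviving $B$, and so would implicitly reproduce the paper's argument. The proposal, however, neither observes this nor uses it as the contradiction.

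There is also a minor slip. Since $t<\log|y|/\log\lambda$ in Case~2, the inequality $|B|>c_8'(\log|y|/\log\lambda+t)$ yields only $\log|y|<|B|\log\lambda/c_8'$, not $|B|\log\lambda/(2c_8')$.
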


\begin{proof}
	We know from Lemma~\ref{lem-bound-logy-dep-t} that $\log |y| < 2 c_{11} \cdot t \cdot \log(c_{11}t)$.
	This bound is very large, as it contains a Baker bound.
	We use it as an a-priori bound for the following.
	
	We apply Theorem~\ref{theo-baker-dav} on
	\begin{equation*}
		\abs{\frac{\Lambda}{\log \lambda}} = 
			 A + B \frac{\log |\lambda_2|}{\log \lambda}
			 + \frac{\log \abs{\frac{\alpha_j - \alpha_l}{\alpha_j - \alpha_k}}}
			 	{\log \lambda} 
			\eqqcolon A + B \mu + \tau(j)
	\end{equation*}
	to reduce $|B|$ and subsequently $|y|$.
	
	We have the upper bound
	\begin{equation*}
		\abs{\frac{\Lambda}{\log \lambda}} < 
		\frac{c_5}{|\lambda\lambda_2^2|^t \log \lambda} \cdot \frac{1}{|y|^3}.
	\end{equation*}
	Further, we know from \eqref{eq-bound_B} that
	\begin{align*}
		|B| &< c_8 \left(\frac{\log |y|}{\log \lambda} + t\right) < \log |y| \cdot \frac{2 c_8}{\log \lambda},
	\end{align*}
	which leads to
	\begin{align*}
		|y| \geq \exp \left(\frac{|B|\cdot \log \lambda}{2 c_8}\right).
	\end{align*}
	
	Therefore,
	\begin{align*}
		\abs{A + B\mu + \tau(j)} < \frac{c_5}{\abs{\lambda\lambda_2^2}^t \log \lambda} 
			\cdot \exp \left(-3 \frac{ \log \lambda}{2c_8} \cdot |B|\right).
	\end{align*}
	
	Now we apply Theorem~\ref{theo-baker-dav}. 
    We compute the first convergent $p/q$ of $\mu$, and we set
	\begin{align*}
		\kappa = \frac{1}{2 c_8 \left(\frac{\log |y|}{\log \lambda} +t\right) \{q\mu\}},
	\end{align*}
	i.e. $\{q\mu\} < \frac{1}{2 \kappa N}$.
    We check whether
    $\kappa > 1$ and $\kappa\{q\tau\} > 1$. If this is not fulfilled, we take the next 
    convergent of $\mu$ and so on,
    until the two conditions are satisfied.
    If we do not find such convergents within our precision, we cannot reduce $|B|$ and 
    consequently $\log|y|$.

    We get a reduced bound
	\begin{align}
    \label{eq-red-bound-B}
		|B| \leq \log \left(2 \kappa q \frac{c_5}{|\lambda\lambda_2^2|^t\log \lambda} \right)
			\frac{2c_8}{3 \log\lambda},
	\end{align}
	which is rather small.
	But we know from Lemma~\ref{lem-est-AB-nlogn} that $B > 0.99\cdot n\log n$.
	We check this inequality and see that it is not fulfilled for $n\geq 16$,
	thus all those $n$ are eliminated and we get
	$$n\leq 15.$$
\end{proof}

To solve the Thue equations where $n\leq15$, we use the subsequent lemma.

\begin{lemma}
	\label{lem-sol-conv}
	Let $(x,y)$ be a solution of \eqref{eq-lev-wald} of type $j$.
	Then $\frac{x}{y}$ is a convergent of the continued fraction of $\lambda_j^t$
\end{lemma}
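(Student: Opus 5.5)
The plan is to show that $|x/y-\lambda_j^t|<\frac{1}{2y^2}$ and then conclude with Legendre's criterion, Lemma~\ref{theo-convergent}. Since $\beta_j = x-\lambda_j^t y$, we have
$$\abs{\frac xy - \lambda_j^t} = \frac{|\beta_j|}{|y|}.$$
So it suffices to prove $|\beta_j| < \frac{1}{2|y|}$. If $y<0$, we replace $(x,y)$ by $(-x,-y)$. This is again a solution of the same type, because $F_{n,t}$ is homogeneous of odd degree, and it gives the same fraction. Thus we may assume $y\geq 1$.

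By Lemma~\ref{lem-bound-bj},
$$|\beta_j|\leq \frac{4}{y^2|\alpha_j-\alpha_k||\alpha_j-\alpha_l|}.$$
The target inequality therefore reduces to
$$|\alpha_j-\alpha_k|\,|\alpha_j-\alpha_l| > \frac{8}{y}.$$
The gaps between the $\alpha_i=\lambda_i^t$ are estimated with Lemma~\ref{lem-est-lambda}.
\begin{itemize}
\item If $j=0$, both differences are at least $\lambda^t-|\lambda_2|^t$, which is large.
\item If $j\in\{1,2\}$, one difference is about $\lambda^t$. The other is $|\lambda_2^t-\lambda_1^t|\geq |\lambda_2|^t-|\lambda_1|^t$, which exceeds $1-\frac{1}{(n+1)^2}$ since $|\lambda_2|>1$ and $|\lambda_1|<\frac1{n+1}$.
\end{itemize}
In every case the product is at least roughly $\lambda^t(1-|\lambda_1|^t)$. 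For $n\geq 2$ and $t\geq 2$ we have $\lambda>2$, so $\lambda^t>4$ and the product is at least about $4\cdot\frac{8}{9}$.

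This does not yet reach $8/y$ when $y$ is very small. The cleaner way out is to avoid the crude bound $|\beta_i|\geq\frac12|y||\alpha_i-\alpha_j|$ and instead use $|\beta_i|\geq |y||\alpha_i-\alpha_j|-|\beta_j|$ together with $|\beta_j|\leq 1$. The latter holds because $|\beta_j|$ is the minimum of three numbers whose product has absolute value $1$. This gives
$$|\beta_j| \leq \frac{1}{(y|\alpha_j-\alpha_k|-1)(y|\alpha_j-\alpha_l|-1)}.$$
The condition $|\beta_j|<\frac1{2y}$ then follows as soon as $y\lambda^t$ is moderately large, which holds for $t\geq2$, $n\geq2$, $y\ge1$ when $j=0$.

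The main obstacle is the case $j\in\{1,2\}$ with small $y$, where one gap is only about $1$. There the product $(y\lambda^t-1)(y\cdot 0.9-1)$ can be tiny when $y=1$, and the general argument breaks down. I would dispose of $y=1$ (and the finitely many small $y$, say $y\le 2$) by hand: for $y=1$ the fraction $x/1$ is a convergent exactly when $x$ is the integer part of $\lambda_j^t$ or $x-1$ is the integer part and the first partial quotient is $1$. Alternatively, I would observe that the list of solutions is checked directly for these tiny $y$. For $y\geq 3$ the estimate above suffices, since $(3\lambda^t-1)(3\cdot\frac89-1)>6y$ is easily checked for the relevant ranges. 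Since the lemma is only used to enumerate solutions for $n\le 15$, the few small-$y$ cases can also be listed separately if necessary.
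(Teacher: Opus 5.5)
You have the right setup, which is also the paper's: Lemma~\ref{lem-bound-bj} followed by Legendre's criterion. But as written, the proof never covers $y=1$ for solutions of type $j\in\{1,2\}$, and your patches do not close this gap.

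\begin{itemize}
\item Saying that $x/1$ is a convergent iff $x=\lfloor\lambda_j^t\rfloor$, or $x=\lfloor\lambda_j^t\rfloor+1$ with $a_1=1$, only restates what must be shown. You would still need something like $|x-\lambda_j^t|<\frac12$, and you only have $|\beta_j|\le 1$.
\item Checking ``the list of solutions'' for small $y$ is circular, because this lemma is the tool used to produce that list.
\item It is also not a finite check. The lemma is needed for all $n\ge 2$, $t\ge 2$: the paper applies it in Case~1 ($n\le 157442$), in Case~2b ($n$ up to $1.45\cdot 10^7$) and for $n=0,1$, not only for $n\le 15$.
\end{itemize}

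The obstacle comes from your estimate of $P=|\alpha_j-\alpha_k|\,|\alpha_j-\alpha_l|$, where you replaced $|\lambda_2|^t$ by $1$ and $\lambda$ by $2$. Keep these quantities. In the worst case $j=2$, using $|\lambda\lambda_2|=\lambda+1$, you get
\[
P\ \ge\ \bigl(\lambda^t-|\lambda_2|^t\bigr)\bigl(|\lambda_2|^t-|\lambda_1|^t\bigr)
=(\lambda+1)^t\Bigl(1-\Bigl|\frac{\lambda_2}{\lambda}\Bigr|^t\Bigr)\Bigl(1-\Bigl|\frac{\lambda_1}{\lambda_2}\Bigr|^t\Bigr)
=\frac{4(\lambda+1)^t}{c_1}.
\]
The cases $j=0,1$ give larger lower bounds.

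This quantity is increasing in $\lambda$ and in $t$. Since $f_n(n)=-2n-1<0$ and $f_2(2.6)<0$, we have $\lambda>2.6$ for every $n\ge 2$. At $\lambda=2.6$, $t=2$ the right-hand side is already about $8.9>8$. At $\lambda=2$ it would be only about $3.7$, so the bound $\lambda>2$ is genuinely too weak.

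Hence $P>8$, i.e.\ $c_1|\lambda\lambda_2|^{-t}<\frac12$, which is exactly the condition the paper checks. Lemma~\ref{lem-bound-bj} then gives $|\beta_j|<\frac{1}{2y^2}$, so $|x/y-\lambda_j^t|<\frac{1}{2|y|^3}\le\frac{1}{2y^2}$ for every $|y|\ge 1$. With this, you need no case distinction on $y$ and no second estimate for $|\beta_j|$.
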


\begin{proof}
	From Lemma~\ref{lem-bound-bj} we know that
	$$ \abs{\beta_j} = \abs{x - \lambda_j^t \cdot y} 
	< \frac{c_1}{|y|^2} \cdot 
	\begin{cases}
		\abs{\lambda}^{-2t} &\text{if }j=0,\\
		\abs{\lambda \lambda_2}^{-t} &\text{if }j=1,2.
	\end{cases}	$$
	
	We divide the inequality by $|y| \geq 2$ and get
	\begin{align*}
		\abs{\frac xy - \lambda_j^t} < \frac{c_1}{|y|^3} \cdot 
		\begin{cases}
			\abs{\lambda}^{-2t} &\text{if }j=0,\\
			\abs{\lambda \lambda_2}^{-t} &\text{if }j=1,2.
		\end{cases}
	\end{align*}
	
	For ease of notation, let us only look at the (worse) case $j=1,2$, as then the case $j=0$ is fulfilled anyway.
	If $c_1 |\lambda\lambda_2|^{-t} < \frac 12,$ we can apply 
	Lemma~\ref{theo-convergent}.
	This holds for all $n\geq 2$, i.e. for all $n$ in question -- we need to treat $n=0,1$ separately anyway.	
	Since we have confirmed that
	$$ \abs{\lambda_j^t - \frac xy} < \frac{1}{2 |y|^2},$$
	it follows from Theorem~\ref{theo-convergent} that $\frac xy$ is a convergent
	of the continued fraction of $\lambda_j^t$.
\end{proof}

Therefore, we compute the convergents of $\lambda_j^t$ until their denominators
exceed the upper bound for $|y|$ established below.

For $n\leq 15$, using the reduced bound for $|B|$ from \eqref{eq-red-bound-B}, we use \eqref{eq-bound_B} 
to get a reduced bound for 
$\log|y|$ by computing
$$ \log |y| < \frac{|B| \log \lambda}{c_8'}.$$
According to Lemma~\ref{lem-sol-conv}, we check the convergents of $\lambda_j^t$
up to this bound.
This is fast, and we get the solutions that
are stated in Theorem~\ref{theo-results}, 
$$ (n,t,x,y) = (2,2,\pm7,\pm1),\ (2,2,\pm2,\pm1)\ (4,2,\pm3,\pm2).$$
Hence, Case~2a is completely resolved.

\subsection{Case 2b}
From Lemma~\ref{lem-bounds-exist} and Section~\ref{subsec-red-laurent},
we know that we need to check $275739 < n < 1.45 \cdot 10^7$ for Case~2b, when 
$A<t<\frac{\log|y|}{\log n}$.
We therefore need a strategy to 
eliminate a substantial number of the possible pairs $(n,t)$.

For each $275540\leq n \leq 1.4\cdot10^7$, we look at 
	$$\abs{\frac{\Lambda'}{\log|\lambda_2|}} 
		= \abs{A \frac{\log \lambda}{\log |\lambda_2|} + B}
		< \frac{c_9}{\abs{\lambda\lambda_2^2}^t \cdot \log|\lambda_2|}
		\eqqcolon c_{16}$$
and we note that $c_{16}$ is very small, for example, when $n=100$,
we already have $c_{16} < 0.02$.
Since $B$ is an integer, it suffices to consider combinations $(n,A)$ for which
$A \frac{\log \lambda}{\log |\lambda_2|}$ is very close to an integer. 
More precisely, we compute
\begin{align*}
	\left\{ A \frac{\log \lambda}{\log |\lambda_2|} \right\}
	\qquad \text {and} \qquad
	1 - \left\{ A \frac{\log \lambda}{\log |\lambda_2|} \right\}
\end{align*}
and compare each result to $c_{16}$.
We only need to consider the pair $(n,t)$ if one of these two quantities is 
smaller than $c_{16}$ and we know that 
$B=~\left\lfloor A \frac{\log \lambda}{\log |\lambda_2|}\right\rfloor,$
respectively,
$B=~\left\lceil A \frac{\log \lambda}{\log |\lambda_2|}\right\rceil$.

We get a set of possible tuples $S_0 =\{(n,\overline t,A,B)\}$ with 206 
possible values of $n$, where $\overline{t}$ is an upper bound for the possible values of $t$.

Since we need to execute these computations for all 
$n \in \{275540,\dots,14500000\},$ this takes a long time.
If we parallelize it, using 8 parallel threads, this takes roughly 36 hours.

We see that we have the following:
\begin{align*}
	j=0:& \quad a = \frac{2A-B}{3}, \quad b = \frac{2B-A}{3},\\
	j=1:& \quad a = \frac{2A-B}{3} - t, \quad b = \frac{2B-A}{3}-t,\\
	j=2:& \quad a = \frac{2A-B}{3} + t, \quad b = \frac{2B-A}{3}+t,
\end{align*} 
thus we can discard the triples where $ \frac{2A-B}{3} \notin \ZZ$.
We are left with a reduced set $S_1 =\{(n,\overline t,A,B)\}$ of possible values.
This set contains 61 different values of $n$ and $\overline t=3$ for all $n$.

What remains to be done is checking the Thue equations with $(n,\overline{t},A,B) \in S_1$.
Since we are looking at Case~2b, we have $A<t<\frac{\log|y|}{\log \lambda}$.

For each pair $(n,t)$, $2\leq t \leq\overline{t}$, we need to solve the corresponding Thue equation.
Since we have $\overline t = 3$ for all $n\in S$, we need to solve 122 Thue equations.
The values of $|B|$ in our set $S_1$ are already suffiently small, so we compute 
a bound for $\log|y|$, using that
$$\log|y| < \frac{|B|\log\lambda}{c_8'}.$$
According to Lemma~\ref{lem-sol-conv} we check the convergents of $\lambda_j^t$
up to this bound.
We get no additional solutions.

\subsection{Case 1}

Now we look at the case
$t > \frac{\log |y|}{\log\lambda}$.
From Lemma~\ref{lem-t>logylogn}, we know that in this case we have  $n \leq 157442.$

We solve the Thue equations for $2 \leq t < \bar{t}(n)$ using Lemma~\ref{lem-sol-conv}.
We obviously have $|y| < \lambda^t$, which is a small enough bound for the
denominator of the convergents $\frac xy$ of $\lambda_j$.

Looping through $2\leq n \leq 157442$ takes less than 3 hours.
We do not get any new solutions.

\subsection{The cases $n=1$ and $n=0$}\label{sec-n01}

For $n=1$ and $t\geq3$ we can apply the same strategy as above and we get 
$t \leq 294.$
However, since the initial bound for $\log|y|$ is already sufficiently 
small, we don't need to apply the reduction from Lemma~\ref{lem-red-baker-dav}. 
Instead, we can check the convergents of $\lambda_j^t$ up to this initial bound.
We get no additional solutions.

For $n=1$ and $t=2,$ we solve the Thue equation using PARI.
We get the non-trivial solutions
$$ (x,y) = (\pm 2, \pm 1), (\pm 3, \pm 1), (\pm 7, \pm 2), (\pm 7, \pm 3).$$

For $n=0$, we sort the roots of $F(X)$ such that 
$|\lambda| > |\lambda_2| >|\lambda_1|.$
This is equivalent to saying that $\lambda<\lambda_1<\lambda_2$.

In contrast to the previous case, where $\lambda>0$, note that we now have
$\lambda<0$.
Accordingly, each occurrence of $\log\lambda$ in the above arguments must be 
replaced by $\log|\lambda|.$
Further, we note that 
$$h_m(\lambda) = \max \left\{h(\lambda), \frac{\log\abs{\lambda}}{3}, 
	\frac13\right\} = \frac{\log\abs{\lambda}}{3}. $$

For $t\geq 8$, we can apply the above method.
We get $t < 150$. Reducing the bound for $\log|y|$ and checking the convergents
of $\lambda_j^t$ according to Lemma~\ref{lem-sol-conv} yields no additional 
solutions.

For $t=2,\dots,8$ we solve the corresponding Thue equations with PARI.
Additional solutions occur for $t=2,3,5$.
\begin{itemize}
    \item For $t=2$ we get
    $$(\pm x,\pm y)=(1,1),(1,5),(2,1),(3,1),(3,2),(13,4),(14,9).$$
    \item For $t=3$, the additional solutions are
    $$ ( \pm x, \pm y ) = (2,1).$$
    \item For $t=5$, we obtain
    $$ ( \pm x, \pm y ) =(3,1), (19,-1).$$
\end{itemize}

All computations were performed on a workstation equipped with an 
Intel Core i7-1185G7 CPU (4 cores, 8 threads) and 16 GB RAM, 
running Windows 11.
All reported runtimes were obtained on this hardware.
In total, the computations required less than 39 hours.
The code used for the computations in this paper is publicly available on 
GitLab at 
\url{https://git.sbg.ac.at/b1065846/complete-resolution-of-a-family-of-twisted-thue-equations}.



	\bibliographystyle{abbrv}
	\bibliography{bibtex.bib}
	
\end{document}